\newtheorem{theorem}{Theorem}[section]
\newtheorem{proposition}[theorem]{Proposition}
\newtheorem{lemma}[theorem]{Lemma}
\newtheorem{corollary}[theorem]{Corollary}
\newtheorem{propdef}[theorem]{Proposition/Definition}
\theoremstyle{definition}
\newtheorem{definition}[theorem]{Definition}
\newtheorem{example}[theorem]{Example}
\theoremstyle{remark}
\numberwithin{equation}{section}
\begin{document}






\newcommand{\BC}{\mathrm{BC}}

\newcommand{\ba}{{\bf a}}

\newcommand{\bal}{{\boldsymbol\alpha}}
\newcommand{\bbe}{{\boldsymbol\beta}}
\newcommand{\bga}{{\boldsymbol\gamma}}

\newcommand{\al}{ \alpha}
\newcommand{\bet}{ \beta}
\newcommand{\ga}{ \gamma}
\newcommand{\la}{ \lambda}

\newcommand{\g}{\xi}
\newcommand{\h}{\zeta}

\newcommand{\vg}{\boldsymbol\xi}
\newcommand{\vh}{\boldsymbol\zeta}

\newcommand{\fG}{\mathfrak{G}}
\newcommand{\fS}{\mathfrak{S}}
\newcommand{\calG}{\mathcal{G}}

\newcommand{\vs}{\vec{s}\,{}^{\prime} }
\newcommand{\vG}{\vec{s}\,{}^{\Gamma} }

\newcommand{\pd}{\vec{G}_m^P}


\title[ A Penrose polynomial for embedded graphs ]{A Penrose polynomial for embedded graphs }



\author[J.~Ellis-Monaghan]{Joanna A. Ellis-Monaghan}
\address{Department of Mathematics, Saint Michael's College, 1 Winooski Park, Colchester, VT 05439, USA.  }
\email{jellis-monaghan@smcvt.edu}
\thanks{The work of the first author was supported by the National Science Foundation (NSF) under grant number DMS-1001408, by the Vermont Space Grant Consortium through the National Aeronautics and Space Administration (NASA), and by the Vermont Genetics Network through Grant Number P20 RR16462 from the INBRE Program of the National Center for Research Resources (NCRR), a component of the National Institutes of Health (NIH).  This paper's contents are solely the responsibility of the authors and do not necessarily represent the official views of the NSF, NASA, NCRR, or NIH}

\author[I.~Moffatt]{ Iain Moffatt}
\address{Department of Mathematics and Statistics,  University of South Alabama, Mobile, AL 36688, USA}
\email{imoffatt@southalabama.edu}

\subjclass[2010]{Primary 05C31, Secondary 05C10}

\keywords{Penrose polynomial, chromatic polynomial, embedded graph, ribbon graphs, transition polynomial, twisted duality, Petrie duals, Four Colour Theorem.}

\date{\today}

\begin{abstract}  
We extend the Penrose polynomial, originally defined  for plane graphs, to graphs embedded in  arbitrary surfaces. Considering this Penrose polynomial of embedded graphs leads to new identities and relations for the Penrose polynomial which can not be realized within the class of plane graphs.  
In particular, by exploiting connections with the transition polynomial and the ribbon group action, we find a deletion-contraction-type relation for the Penrose polynomial. We relate the Penrose polynomial of an orientable checkerboard colourable graph to the circuit partition polynomial of its medial graph and use this to find new combinatorial interpretations of the Penrose polynomial.  We also show that the Penrose polynomial of a plane graph $G$ can be expressed as a sum of chromatic polynomials of  twisted duals of $G$. This allows us to obtain a new reformulation of the Four Colour Theorem.
\end{abstract}

%


\maketitle

\section{Introduction}\label{s.intro}

The Penrose polynomial, $P(G,\lambda)$, of a plane graph first appeared implicitly in \cite{Pen71}, where it arose out of Penrose's work on diagrammatic tensors. The Penrose polynomial has a number of remarkable graph theoretical properties, 
particularly with respect to graph colouring.  
For example, it is well-known that The Four Colour Theorem is equivalent
to showing that every plane, cubic, connected graph can be properly
edge-coloured with three colours.   The Penrose polynomial encodes exactly this information
(see Penrose~\cite{Pen71}): if $G$ is a plane, cubic, connected graph, then

\begin{equation}\label{e.ptcol}
\text{the number of edge 3-colourings of } G = P\left( {G;3} \right) = \left(  - 1/4
\right)^{\frac{{v(G)}}{2}} P\left( {G; - 2} \right) .
\end{equation}

In addition to this relation with the Four Colour Theorem, the Penrose polynomial has a host of other significant graph theoretical properties. We refer the reader to the excellent  expositions on the Penrose polynomial given by Aigner in~\cite{Ai97} and~\cite{Aig00}, and also to the papers \cite{AM00, E-MS01, Sar01, Sze02} for further explorations of its properties.

In this paper we extend the Penrose polynomial, previously defined only for plane graphs, to graphs embedded in arbitrary surfaces.
This leads to a number of new properties of the Penrose polynomial of both plane graphs and embedded graphs.  Our primary tools for investigating the Penrose polynomial are   
 twisted duality and the ribbon group action which were introduced by the authors in \cite{E-MMe}, where they were used to solve isomorphism problems involving medial graphs and to expose the connections among embedded graphs and their medial graphs.  This ribbon group action depends on two operations:  $\tau(e)$ which gives a half-twist to an edge $e$, and $\delta(e)$ which forms the partial dual with respect to an edge $e$. The connection between the ribbon group action and the Penrose polynomial arises through the transition polynomial.     It has been shown that the Penrose polynomial of a plane graph can be expressed in terms of the transition polynomial (see e.g. \cite{Ja90}). We will show that, analogously, the Penrose polynomial of an embedded graph can be written in terms of  the topological transition polynomial defined in \cite{E-MMe}. Also, in \cite{E-MMe}, it was shown that the ribbon group acts on embedded graphs and on the topological transition polynomial in a compatible way. It is this connection between operations on embedded graphs and graph polynomials that we exploit to determine properties of the Penrose polynomial.

Many of the properties we describe here can not be realized within the class of plane graphs in that a relation for a graph $G$ may involve graphs embedded in different surfaces than $G$. For example, we will show that, for embedded graphs, the Penrose polynomial satisfies the deletion-contraction relation 
\[P(G^{\delta\tau(e)};\la)= P(G/e; \la)-P(G-e; \la)\], 
where $G^{\delta\tau(e)}$ is a twisted dual of $G$, and is generally non-plane.  We also show that the Penrose polynomial of a plane  graph $G$ can be expressed as a sum of the  chromatic polynomials of the duals of its partial Petrials, where a partial Petrial results from giving a half-twist to some subset of the edges of a graph:
\begin{equation}\label{penchrom1}
P(G;\lambda) = \sum_{A\subseteq E(G)}  \chi ((   G^{\tau(A)}   )^*   ;\lambda)  .
\end{equation} 
This identity completes a result of Aigner from \cite{Ai97}, which states that 
 \[P(G;k)\leq \chi(G^*;k),\]
for all $k\in \mathbb{N}$. The expression $\chi(G^*;k)$ in Aigner's inequality is a single summand in our expression (\ref{penchrom1}) for the Penrose polynomial $P(G;k)$.

This relation between the chromatic and Penrose polynomials allows us to give a new reformulation of the Four Colour Theorem in terms of the chromatic polynomial.

Further to these results, which illustrate the advantage of considering the Penrose polynomial for graphs embedded in an arbitrary surface, rather than just the plane, we also discuss which of the known properties of the Penrose polynomial for plane graphs extend to non-plane graphs.  In particular, we relate the Penrose polynomial of an orientable checkerboard colourable graph to the circuit partition polynomial of its medial graph and use this to find new combinatorial interpretations of the Penrose polynomial.  

This paper is structured as follows. We begin with some preliminary definitions for embedded graphs. Then, in Section~\ref{s.penrose}, we define the Penrose polynomial  of an embedded graph, discuss some differences between the plane and  non-plane Penrose polynomial, and describe the relation between the topological transition polynomial and the Penrose polynomial, which is of fundamental importance here.
 In Section~\ref{s.td}, we give a reasonably comprehensive overview of  the ribbon group action and twisted duality, our main tools in this paper, and describe how the ribbon group interacts with the transition polynomial. In Section~\ref{s.tdpenrose} we apply our results on twisted duality to to obtain identities for the Penrose polynomial, including several deletion-contraction relations. Section~\ref{s.cpp} contains the relation to the circuit partition polynomials and new combinatorial interpretations for evaluations of the Penrose polynomial.  In Section~\ref{s.colour}, by examining connections between the Penrose polynomial and graph colourings we express the Penrose polynomial as a sum of the chromatic polynomials of twisted duals. We conclude with a new reformulation of the Four Colour Theorem.  

We would like tho thank Gabriel Cunningham for helpful comments.

\section{Preliminary definitions}\label{s.def}

\subsection{Embedded graphs and medial graphs}\label{ss.defs}

A {\em cellularly embedded graph} is a graph  $G$ embedded in a surface $\Sigma$ such that every face is a 2-cell. If $G\subset \Sigma$, the homeomorphism class of the surface $\Sigma$ generates an equivalence class of embedded graphs, and we say that embedded graphs are  {\em equal} if they are in the same equivalence class.   We assume familiarity with embedded graphs, referring the reader to \cite{GT87} for further details.

 We use standard notation: $V(G)$, $E(G)$, and $F(G)$ are the sets of vertices, edges, and faces, respectively, of a cellularly embedded graph $G$, while $v(G)$, $e(G)$, and $f(G)$, respectively, are the numbers of such.
We will say that a loop ({\em i.e.}  an edge that is incident to exactly one vertex) in a cellularly embedded graph is {\em non-twisted} if a neighbourhood of it is homeomorphic to an annulus, and we say that the loop is {\em twisted} if a neighbourhood of it is homeomorphic to a M\"obius band.

\subsection{Representing cellularly embedded graphs}\label{ss.rg}

There are several ways to represent cellularly embedded graphs, and it is often more convenient and natural to work in the language of one or the other of these representations. Thus, we briefly describe ribbon graphs, arrow presentations and their equivalence to each other and to signed rotation systems and cellularly embedded graphs.   Further details may be found in \cite{Ch1,GT87}.

\begin{definition}
A {\em ribbon graph} $G =\left(  V(G),E(G)  \right)$ is a (possibly non-orientable) surface with boundary, represented as the union of two  sets of topological discs, a set $V (G)$ of {\em vertices}, and a set $E (G)$  of {\em edges} such that: 
\begin{enumerate} 
\item the vertices and edges intersect in disjoint line segments;
\item each such line segment lies on the boundary of precisely one
vertex and precisely one edge;
\item every edge contains exactly two such line segments.
\end{enumerate}
\end{definition}

Two ribbon graphs are said to be {\em equivalent} or {\em equal} if there is a homeomorphism between them that preserves the vertex-edge structure.
 The {\em genus} of a ribbon graph  is its genus as a punctured surface.  A ribbon graph is said to be  {\em plane} if it is of genus zero.  In the context of ribbon graphs, $f(G)$ is the number of boundary components of the surface comprising the ribbon graph $G$.

It is well known that ribbon graphs and cellularly embedded graphs are equivalent:
if $G$ is a cellularly embedded graph, a ribbon graph representation results from taking a small neighbourhood  of the embedded graph $G$. Neighbourhoods  of vertices of the graph $G$ form the vertices of a ribbon graph, and neighbourhoods of the edges of the embedded graph form the edges of the ribbon graph.  On the other hand, if $G$ is a ribbon graph, we simply cap off the punctures to obtain a ribbon graph cellularly embedded in the surface. Form an embedded graph by placing one vertex in each vertex disc and connect them with edges that follow the edge discs. See Figure~\ref{f.desc}, which shows a graph embedded in the projective plane.

\begin{figure}
\[ \raisebox{-4mm}{\includegraphics[height=2.4cm]{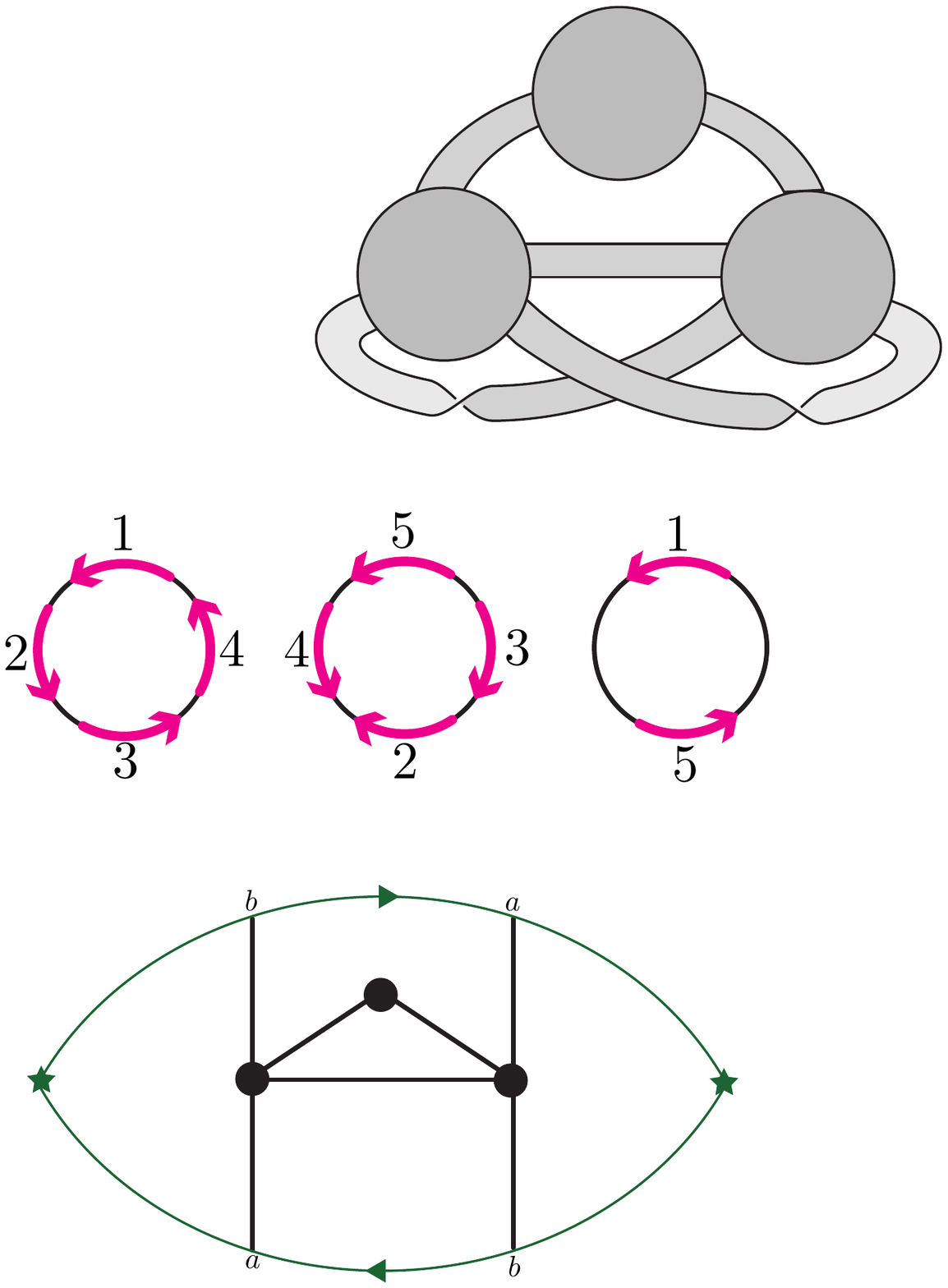}}  \quad \raisebox{7mm}{=}  \quad  \raisebox{-7mm}{\includegraphics[height=3cm]{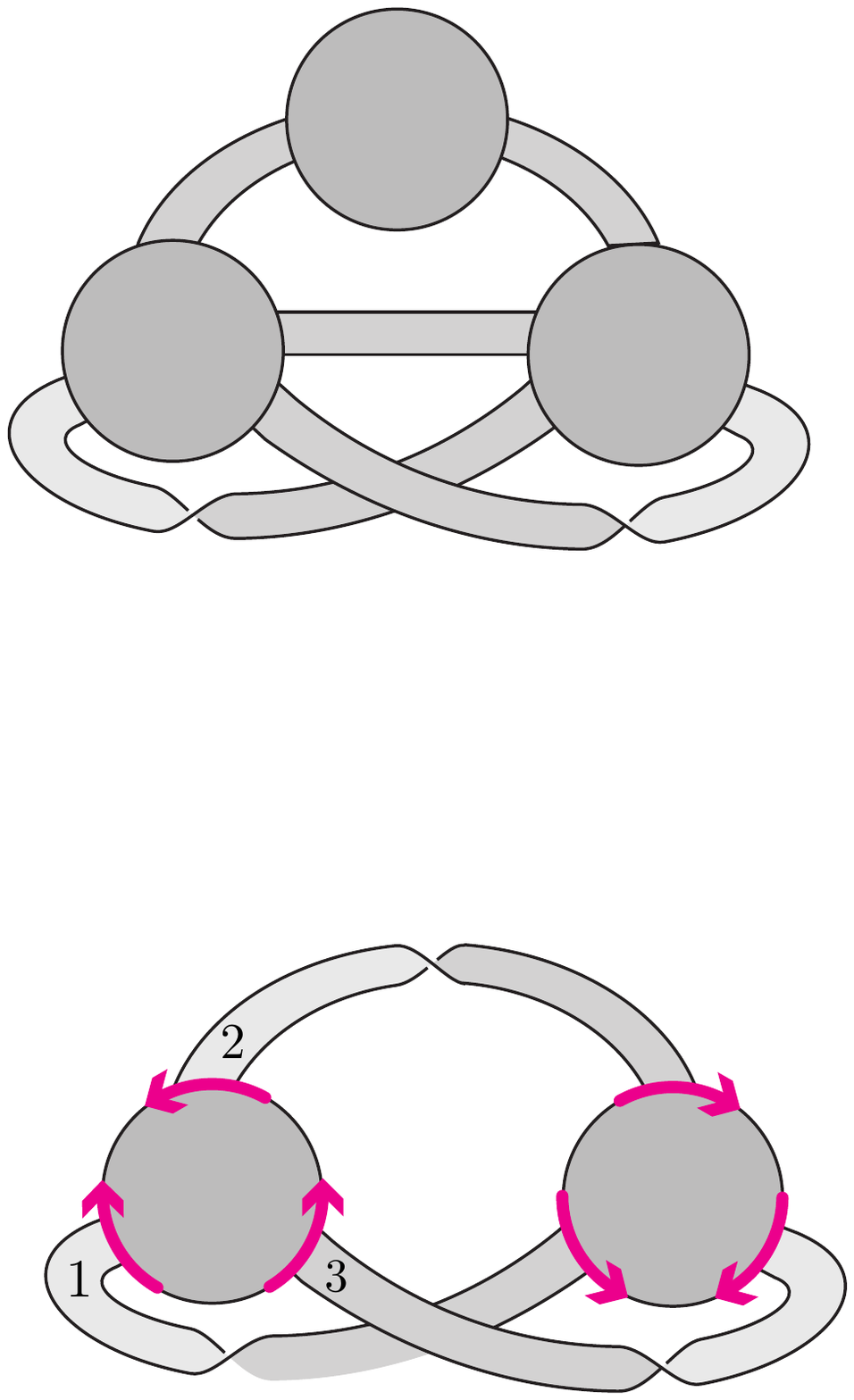}}  \quad \raisebox{7mm}{=} \quad \includegraphics[height=1.6cm]{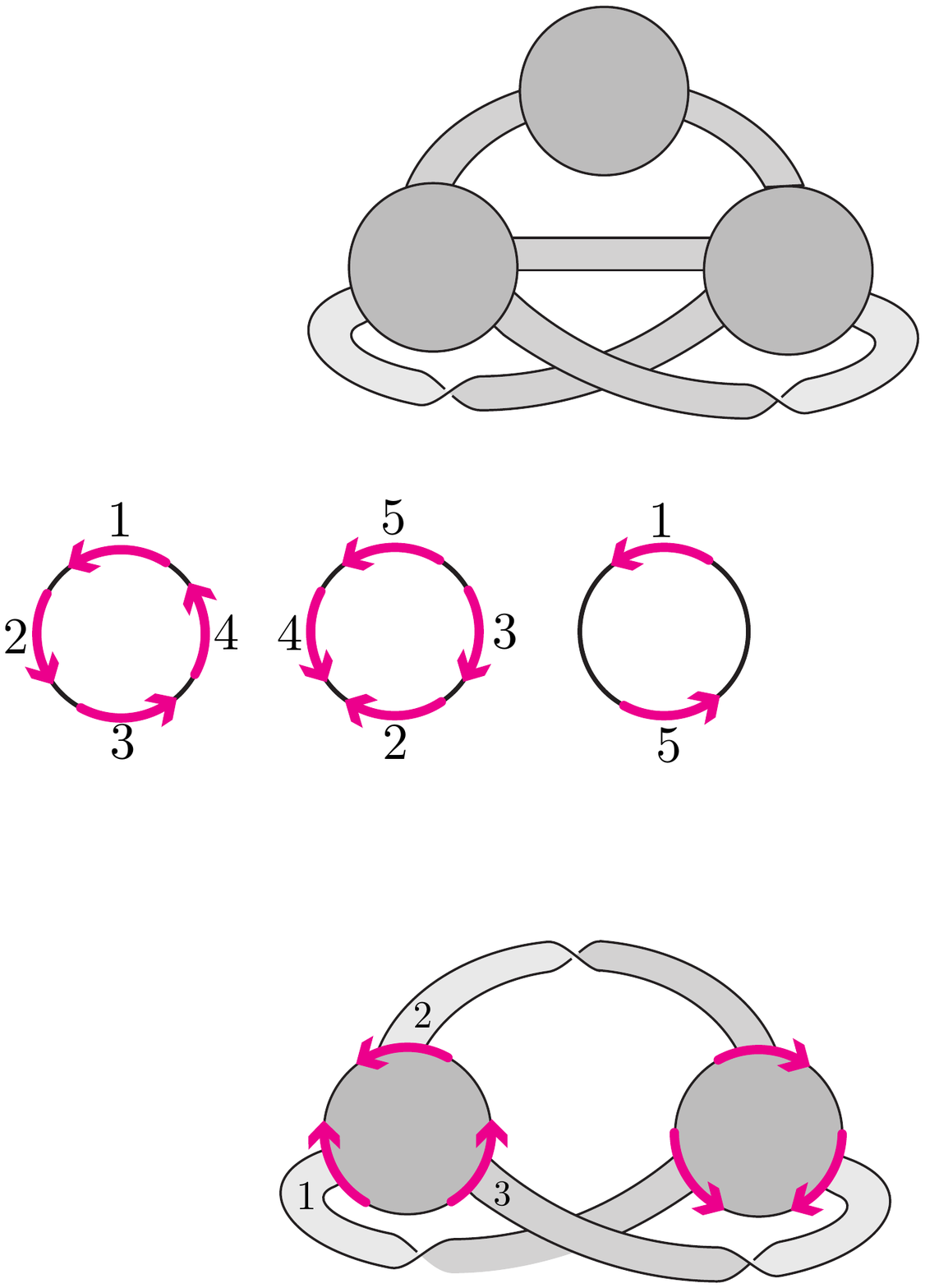}  \]

\caption{Three descriptions of the same embedded graph.}
\label{f.desc}
\end{figure}

\medskip

We will also find the following, more combinatorial, description of a cellularly embedded graph useful.
\begin{definition}[Chmutov \cite{Ch1}]
An {\em arrow presentation} consists of  a set of circles,  each with a collection of disjoint,  labelled arrows, called {\em marking arrows}, lying on them. Each label appears on precisely two arrows.
\end{definition}

 Two arrow presentations are {\em equivalent}  if one can be obtained from the other by reversing the direction of all of the marking arrows which belong to some subset of labels, or by relabelling the pairs of arrows.

A ribbon graph can be obtained from an arrow presentation by viewing each circle as the boundary of a disc that becomes a vertex of the ribbon graph.  Edges are then added to the vertex discs by taking a disc for each label of the marking arrows.  Orient the edge discs arbitrarily and choose two non-intersecting arcs on the boundary of each of the edge discs. Orient these arcs according to the orientation of the edge disc. 
Finally, identify these two arcs with two marking arrows, both with the same label, aligning the direction of each arc consistently with the orientation of the marking arrow. This process is illustrated pictorially thus:
\[ \includegraphics[height=10mm]{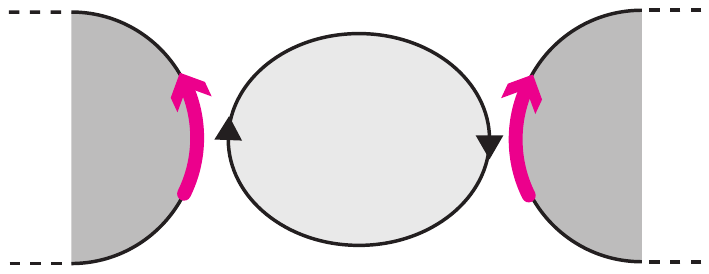}  \hspace{4mm}
\raisebox{4mm}{\includegraphics[width=11mm]{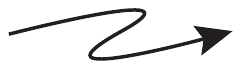}}\hspace{4mm}
\includegraphics[height=10mm]{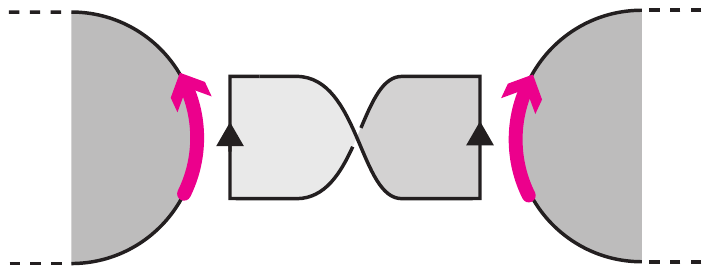} \hspace{4mm}
\raisebox{4mm}{\includegraphics[width=11mm]{arrow}}\hspace{4mm}
 \includegraphics[height=10mm]{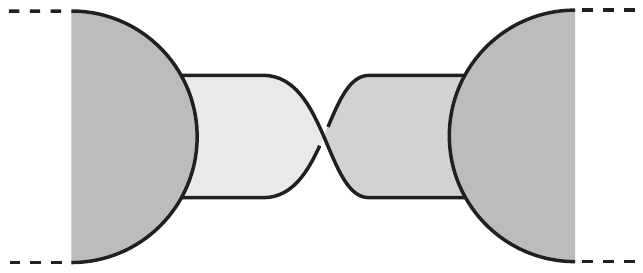} \; . \]

Conversely, every ribbon graph  gives rise to an arrow presentation.  To describe a ribbon graph $G$ as an arrow presentation, start by arbitrarily orienting and labelling  each edge disc in $E(G)$. This induces an orientation on the boundary of each edge in $E(G)$.  Now, on the arc where an edge disc intersects a vertex disc, place a marked arrow on the vertex disc, labelling the arrow with the label of the edge it meets and directing it consistently with the orientation of the edge disc boundary. The boundaries of the vertex set marked with these labelled arrows give the arrow marked circles of an arrow presentation.  See Figure~\ref{f.desc} for an example, and \cite{Ch1} for further details.

Arrow presentations are essentially signed rotation systems (see \cite{MT01} for example) with the labels about the circles giving the rotations and the agreement or disagreement of the directions of the marking arrows corresponding to the plus or minus signs on the edges.  

Since cellularly embedded graphs, ribbon graphs and arrow presentations are all equivalent, we can, and will, move freely between these representations, choosing whichever is most convenient at the time for our purposes.  We will use the term `embedded graph' loosely to mean any one of these representations of a cellularly embedded graph.

\subsection{Medial graphs}\label{ss.medial}

The Penrose polynomial of a plane graph can be defined via its medial graph, and we similarly will define the Penrose polynomial of an embedded graph via its medial graph.   Accordingly, medial graphs play a central role in this paper.  If  a graph $G$ is cellularly embedded, we construct its {\em medial graph}, $G_m$,  by placing a vertex of degree $4$ on each edge of $G$, and then drawing the edges of the medial graph by following the face boundaries of $G$. Consistent with this definition is that the medial graph of an isolated vertex is an isolated face, and we adopt this convention. Note that if $G$ is cellularly embedded in a surface $\Sigma$, then $G_m$ is also cellularly embedded in $\Sigma$.  

A {\em checkerboard colouring} (or face $2$-colouring)  of a $4$-regular embedded  graph $F$ is an assignment of the colour black or white to each of its faces such that adjacent faces have different colours. While not all $4$-regular embedded  graphs admit checkerboard colourings, all medial graphs do. In fact, we can associate a canonical checkerboard colouring  to a medial graph.
In the construction of $G_m$, the vertices of $G$ appear in some of the faces of $G_m$. Thus we can associate a face of $G_m$ with each vertex of $G$. We can then construct  a checkerboard colouring (or face $2$-colouring) of $G_m$ by colouring a face black if it is associated with a vertex of $G$, and colouring it white otherwise. This checkerboard colouring is called the {\em canonical checkerboard colouring} of $G_m$. 

 An example of a medial graph and its canonical checkerboard colouring  is given in Figure~\ref{fig.medialgraph}. 

\begin{figure}
\begin{tabular}{p{4cm}cp{4cm}cp{4cm}}
\includegraphics[height=25mm]{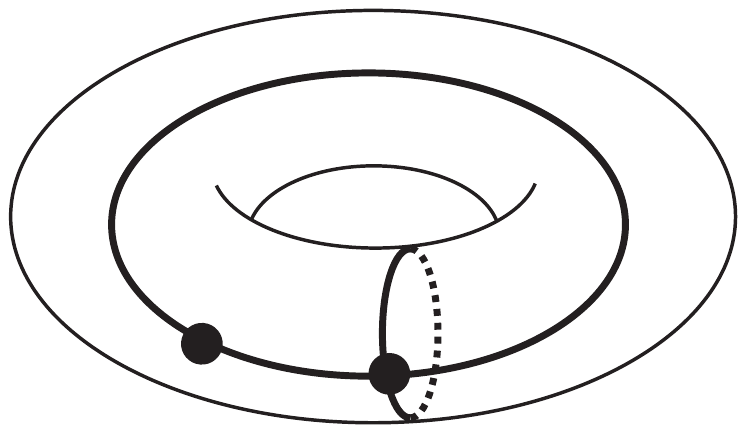} & \hspace{10mm} & \includegraphics[height=25mm]{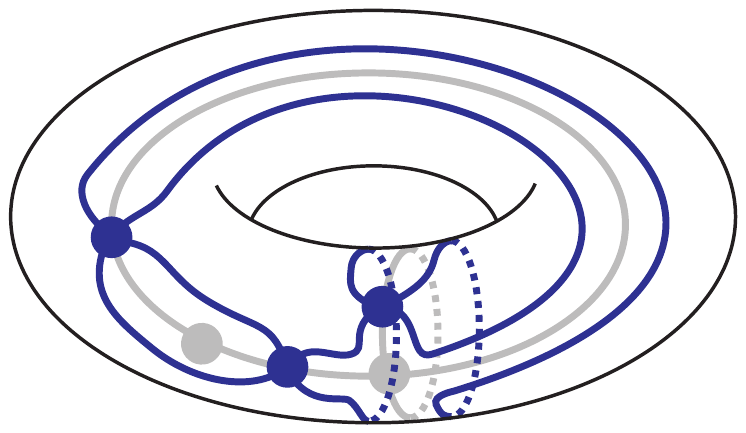}  & \hspace{10mm} &  \includegraphics[height=25mm]{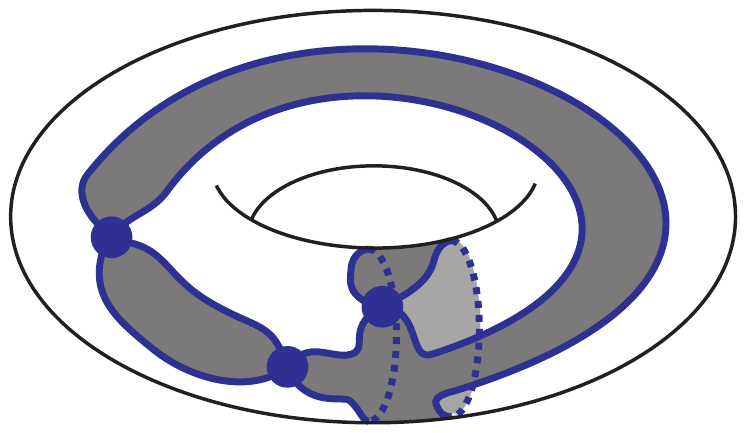}  \\ && &&\\
A cellularly embedded graph $G$. && The  medial graph $G_m$. &  & The canonical checkerboard colouring of $G_m$.
\end{tabular}
\caption{An example of a medial graph.}
\label{fig.medialgraph}
\end{figure}

\subsection{Weight systems and graph states}\label{ss.ws}

A \emph{vertex state} at a vertex $v$  of an abstract $4$-regular graph $F$ is a partition, into pairs, of the edges incident with $v$. 
Thus, if $F$ is an cellularly embedded graph, a vertex state is simply a choice of one of the following configurations in a neighbourhood of the vertex $v$:

\[\includegraphics[width=1.2cm]{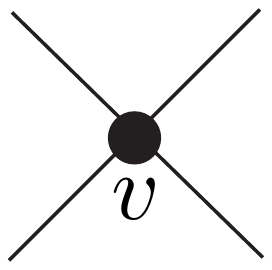}\quad \raisebox{5mm}{$\longrightarrow$} \quad \quad  
 \includegraphics[width=1.2cm]{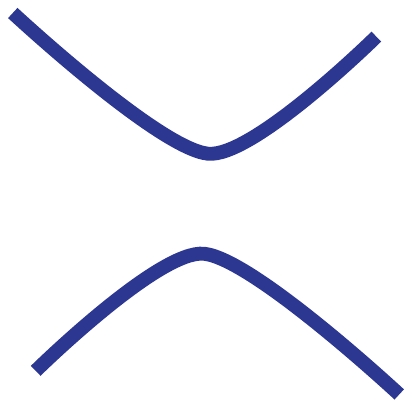}\quad \raisebox{5mm}{,} \quad \quad  \includegraphics[width=1.2cm]{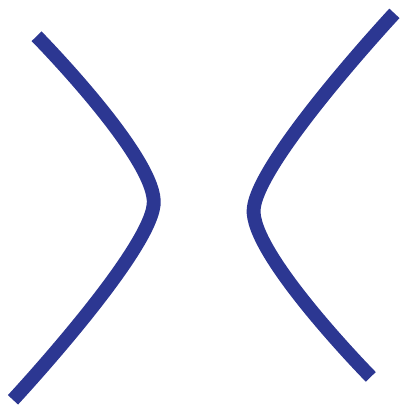}\quad \quad\raisebox{5mm}{ or } \quad \quad \includegraphics[width=1.2cm]{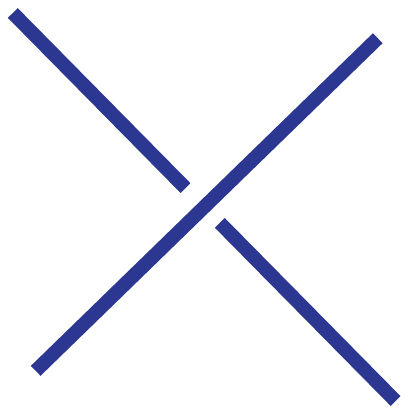}\raisebox{5mm}{.}\]
A choice of one of the configurations on the right replace a small neighbourhood of the vertex $v$.

 If $G$ is an embedded graph and  $G_m$ its canonically checkerboard coloured medial graph, then we may use the checkerboard colouring to distinguish among the vertex states, naming them a {\em white split}, a {\em black split} or a {\em crossing}, as follows:
 \begin{equation}\label{e.splits}
 \begin{array}{ccccccc}
 \includegraphics[height=12mm]{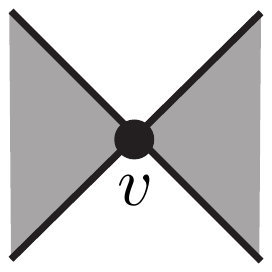} & \quad \raisebox{5mm}{$\longrightarrow$} \quad  & \includegraphics[height=12mm]{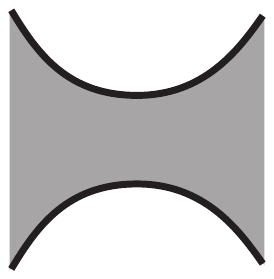} &\quad \raisebox{5mm}{or} \quad  & \includegraphics[height=12mm]{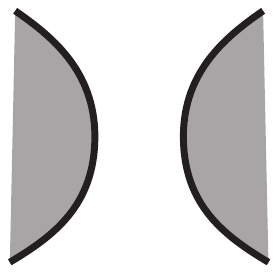} &\quad \raisebox{5mm}{or} \quad  &\includegraphics[height=12mm]{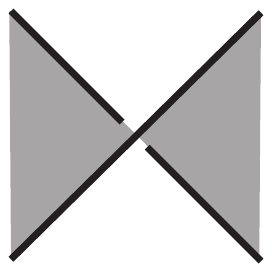} \\
\text{in }G_m && \text{white split} && \text{black split} && \text{crossing.}
 \end{array}
 \end{equation}

A {\em graph state} $s$ of  any 4-regular graph $F$ is a choice of vertex state at each of its vertices. Each graph state corresponds to a specific family of edge-disjoint cycles in $F$, and this family is independent of embedding (although different embeddings of $F$ will generally use different vertex states to generate the same family of disjoint cycles). We call these cycles the {\em components of the state}, denoting the number of them by $c(s)$.
We will denote the set of states of a $4$-regular graph $F$ by $St(F)$. If $G_m$ is the canonically checkerboard coloured medial graph of $G$, then $\mathcal{P}(G_m)$ denotes the set of states with no
  black splits, and we will call such states {\em Penrose states}.

\begin{example} Three of the possible nine graph states of the graph $G=\raisebox{-4mm}{ \includegraphics[height=1.2cm]{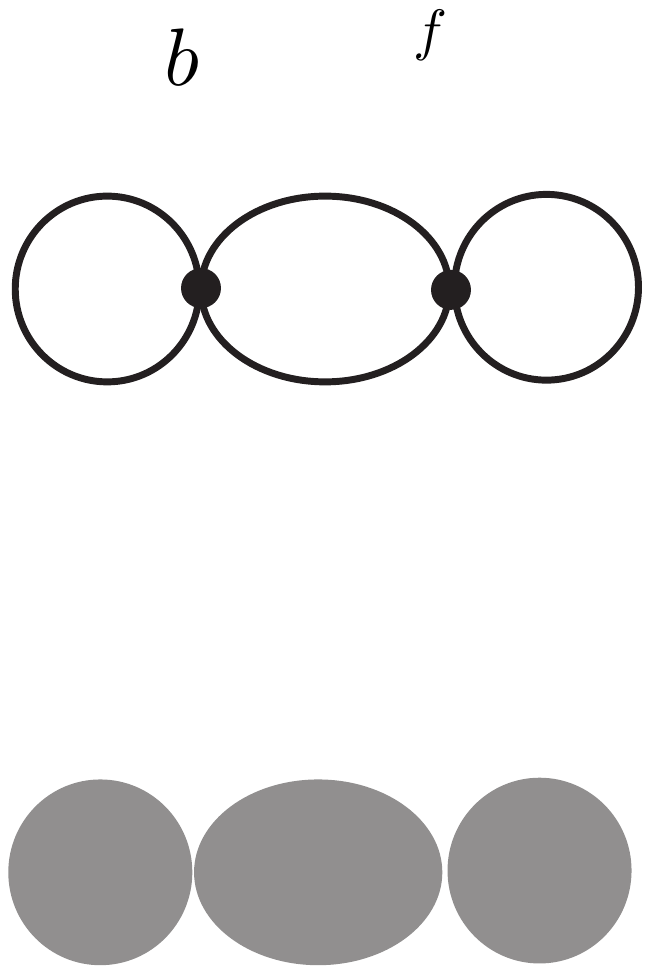}}$ are 
\raisebox{-4mm}{\includegraphics[height=1.2cm]{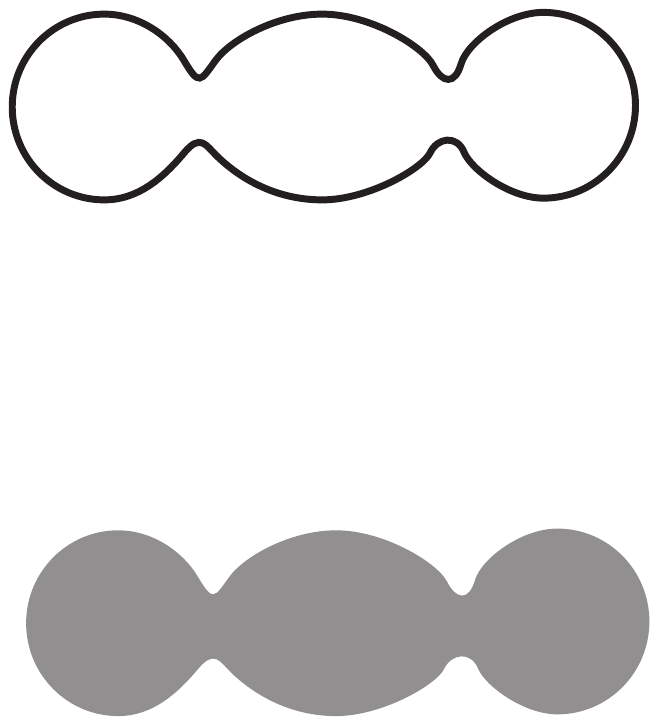}}\;, 
\raisebox{-4mm}{\includegraphics[height=1.2cm]{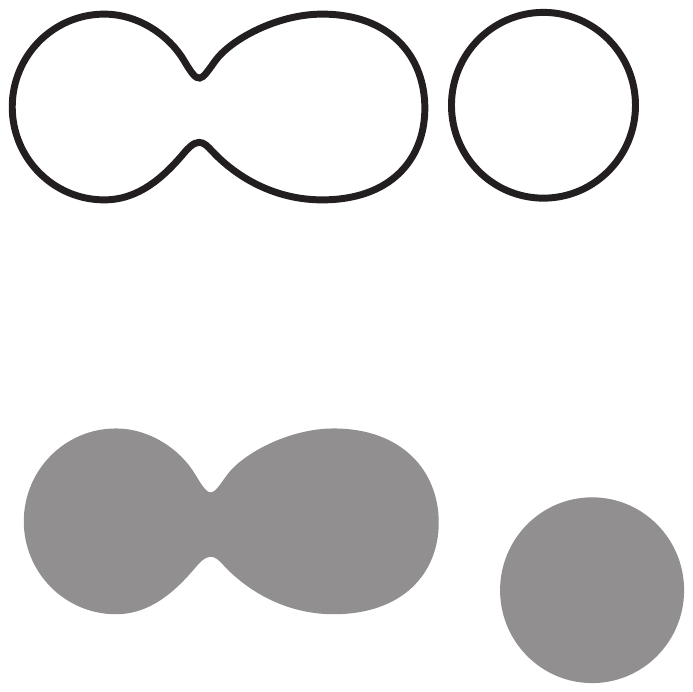}}\;,
and
\raisebox{-4mm}{\includegraphics[height=1.2cm]{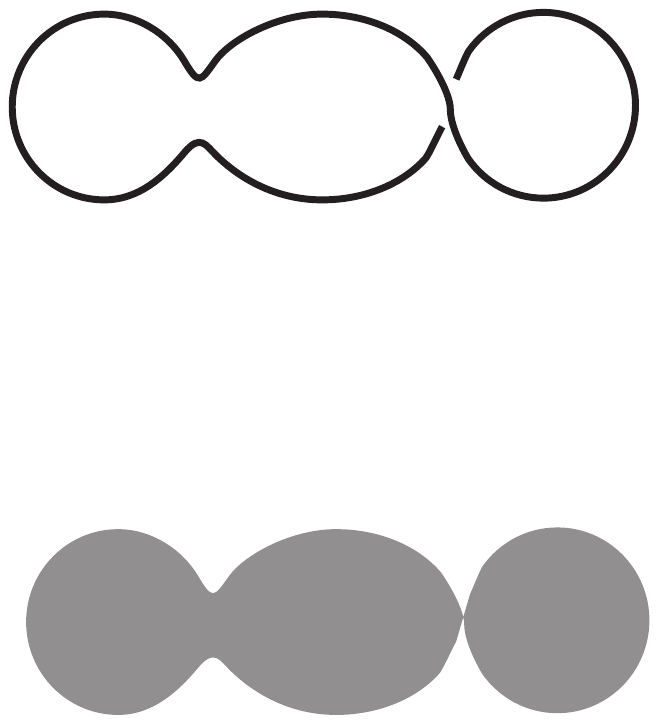}}\;.

\end{example}

A \emph{weight system}, $W( F )$, of any $4$-regular graph $F$ (embedded or not) is an assignment of a weight in a unitary ring $\mathcal{R}$ to every vertex state of $F$.     (We simply write $W$ for $W(F)$ when the graph is clear from context.)  If $s$ is a state of $F$, then the \emph{state weight} of $s$ is $\omega(s) := \prod_{v \in V(F)} {\omega(v,s)}$, where $\omega(v,s)$ is the vertex state weight of the vertex state at $v$ in the graph state $s$.  See \cite{E-MS02} for further details.

If $G_m$ is an embedded  medial graph, then we can construct a weight system by associating a weight to white splits, black splits and crossings as in the definition below. We will use the resulting medial weight system to define the Penrose polynomial of an embedded graph in Section \ref{s.penrose}.

\begin{definition}\label{d.mws}
 Let $G$ be an embedded graph with embedded medial graph $G_m$.  Define the {\em medial weight system}, $W_m(G_m)$, using the canonical checkerboard colouring of $G_m$ as follows.  A vertex $v$ has state weights given by an ordered triple $(\alpha_v, \beta_v, \gamma_v)$, indicating the weights of the white split, black split, and crossing state, in that order. We write $(\boldsymbol\alpha, \boldsymbol\beta, \boldsymbol\gamma)$ for the set of these ordered triples, indexed, equivalently, either by the vertices of $G_m$, or by the edges of $G$.  
\end{definition}

Visually, the medial weight system $(\boldsymbol\alpha, \boldsymbol\beta, \boldsymbol\gamma)$ will assign weights to a vertex $v$ of $G_m$ as follows:
\begin{center}
\begin{tabular}{ c|ccc } 
\raisebox{5mm}{state}  &    \hspace{5mm} \includegraphics[height=12mm]{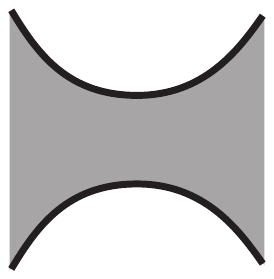} \hspace{5mm}  &   \hspace{5mm}\includegraphics[height=12mm]{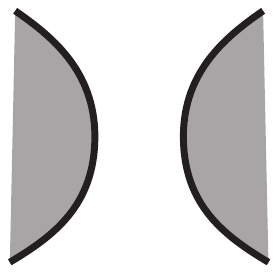} \hspace{5mm}  &   \hspace{5mm}\includegraphics[height=12mm]{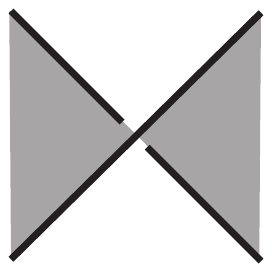}  \hspace{5mm} \\ \hline
weight & $\alpha_v$ &$\beta_v$ & $\gamma_v$ \\ 
\end{tabular}.
\end{center}

We will use the following convention:  if in a medial weight system $(\boldsymbol\alpha, \boldsymbol\beta, \boldsymbol\gamma)$ we have $\boldsymbol\alpha = \boldsymbol{k}$, where $\boldsymbol{k}\in \mathbb{N}$, we mean that  each $\alpha_v=k$, and similarly for 
$\boldsymbol\beta$ and  $\boldsymbol\gamma$. 
For example,  $(\boldsymbol\alpha, \boldsymbol\beta, \boldsymbol\gamma)=(\boldsymbol{-1}, \boldsymbol0, \boldsymbol1)$, denotes the medial weight system where $\alpha_v=-1$, $\beta_v=0$, and $\gamma_v=1$, for each $v$.

\section{The topological Penrose polynomial}\label{s.penrose}

\subsection{The Penrose polynomial of an embedded graph}\label{ss.penrose}
We now have the necessary foundations to extend the Penrose polynomial to embedded graphs.  The classical Penrose polynomial is just Definition \ref{d.Penrose} restricted to plane graphs.
\begin{definition}\label{d.Penrose}  Let  $G$ be
  an embedded graph with canonically checkerboard coloured medial graph $G_m$, let $St(G_m)$ be the set
  of states of $G_m$, and let $\mathcal{P}(G_m)$ be the set of Penrose states.  Then the {\em Penrose polynomial} is defined by 
\[
P(G;\la) := \sum\limits_{s \in St(G_m )}\omega_P (s) \la^{c(s)}   =
         \sum\limits_{s \in \mathcal{P}(G_m )}
            \left( { - 1} \right)^{cr\left( s\right)}
            \la^{c(s)}  \in \mathbb{Z}[\la]  ,  
\]
where $\omega_P (s)$ denotes the graph state weights of the medial weight system $W_P(G_m)$ defined by $(\boldsymbol{1}, \boldsymbol{0}, \boldsymbol{-1})$, where  $c(s)$ is the number of components in the graph state $s$, and
$cr(s)$ is the number of crossing vertex states in the graph state
$s$. 
\end{definition}

\begin{example}\label{ex:Penrose}
As an example, consider the graph $G$ embedded on the torus shown below.  
\[ \raisebox{7mm}{$G=$}\;\;\includegraphics[height=15mm]{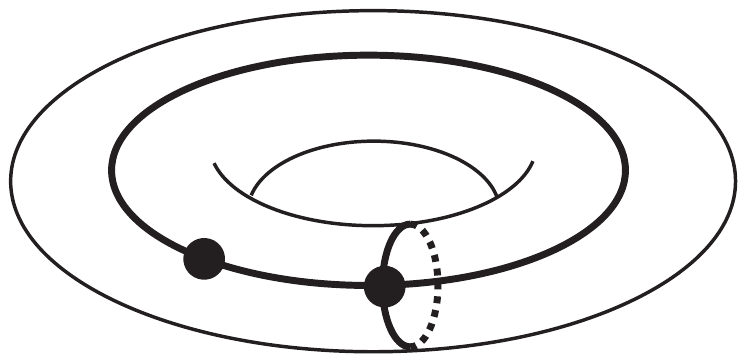}  \hspace{3cm} \raisebox{7mm}{$G_m=$}\;\;\includegraphics[height=15mm]{co6} \]

The terms of the Penrose polynomial are calculated as follows:
\smallskip

\begin{center}
\begin{tabular}{ c c c c c c c}
 \includegraphics[height=20mm]{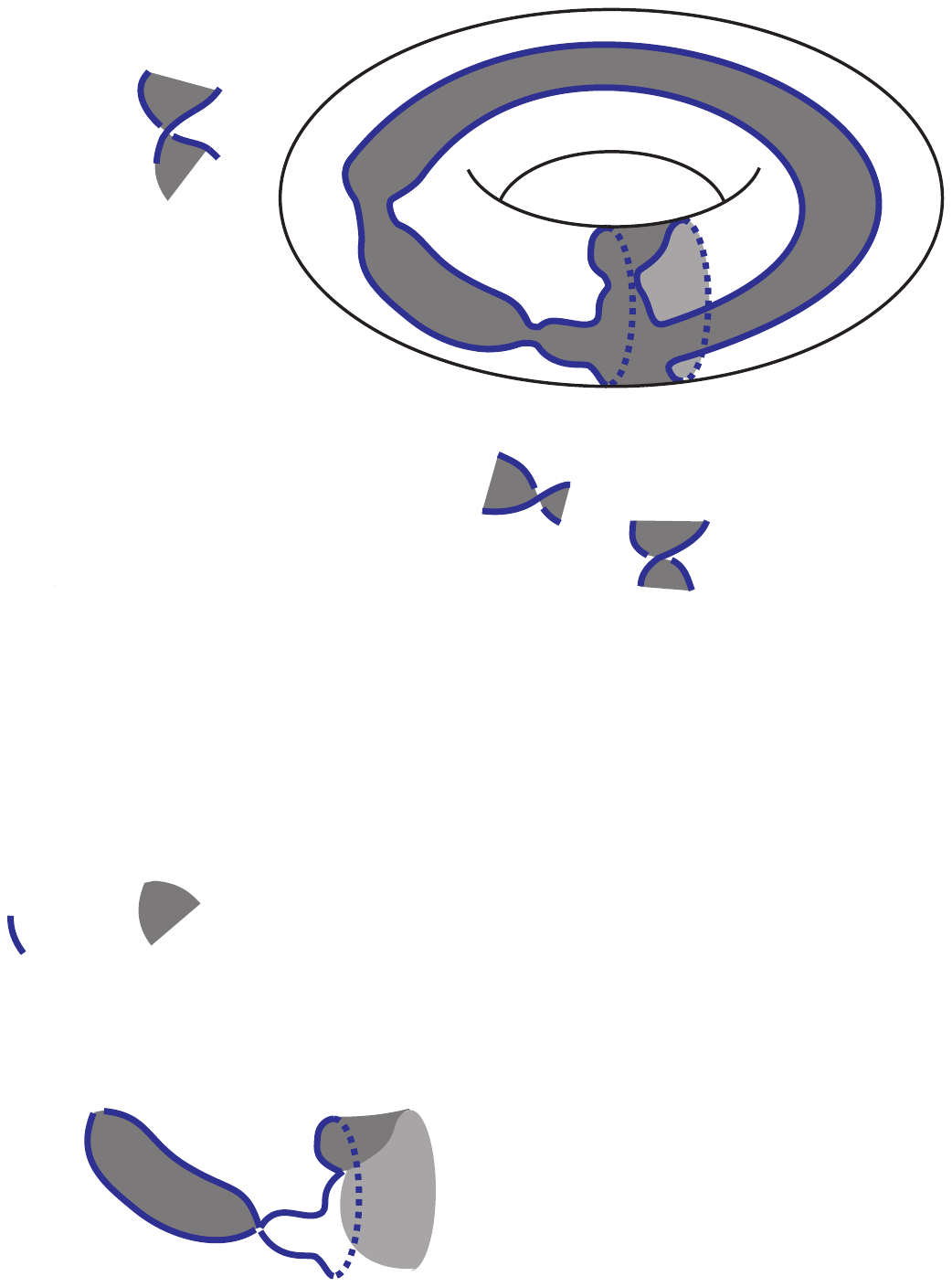} && \includegraphics[height=20mm]{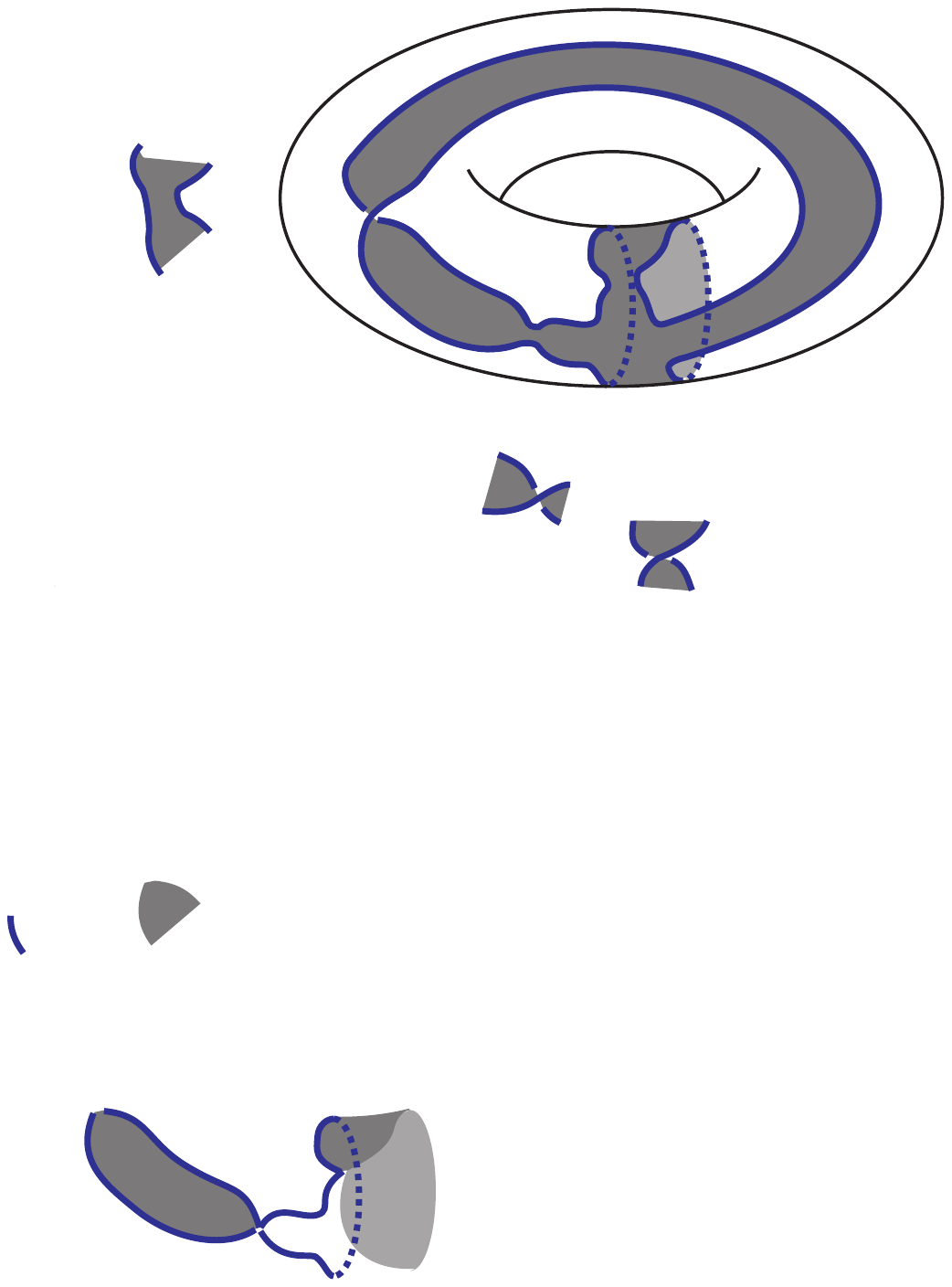}&&  \includegraphics[height=20mm]{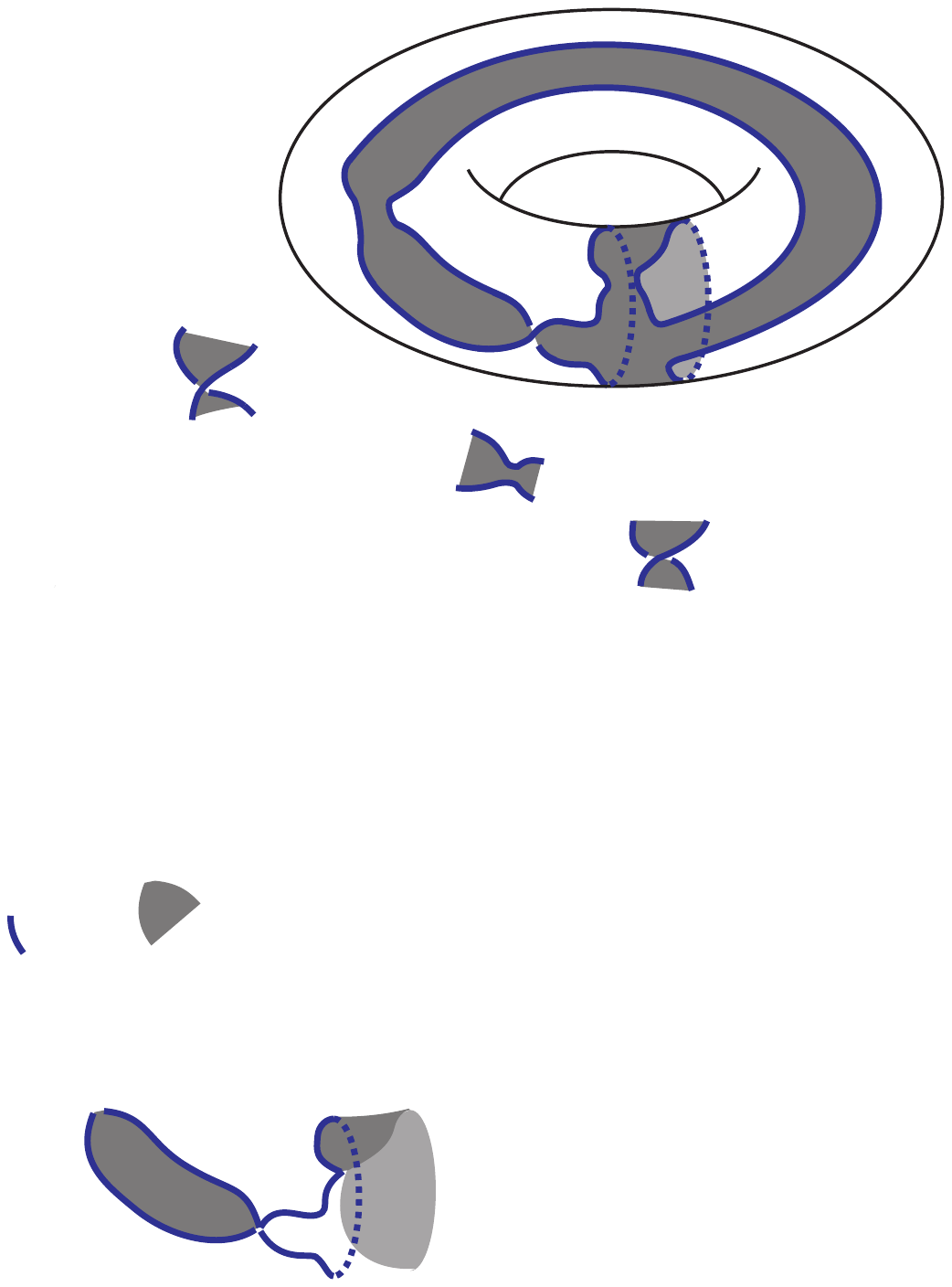}&&  \includegraphics[height=20mm]{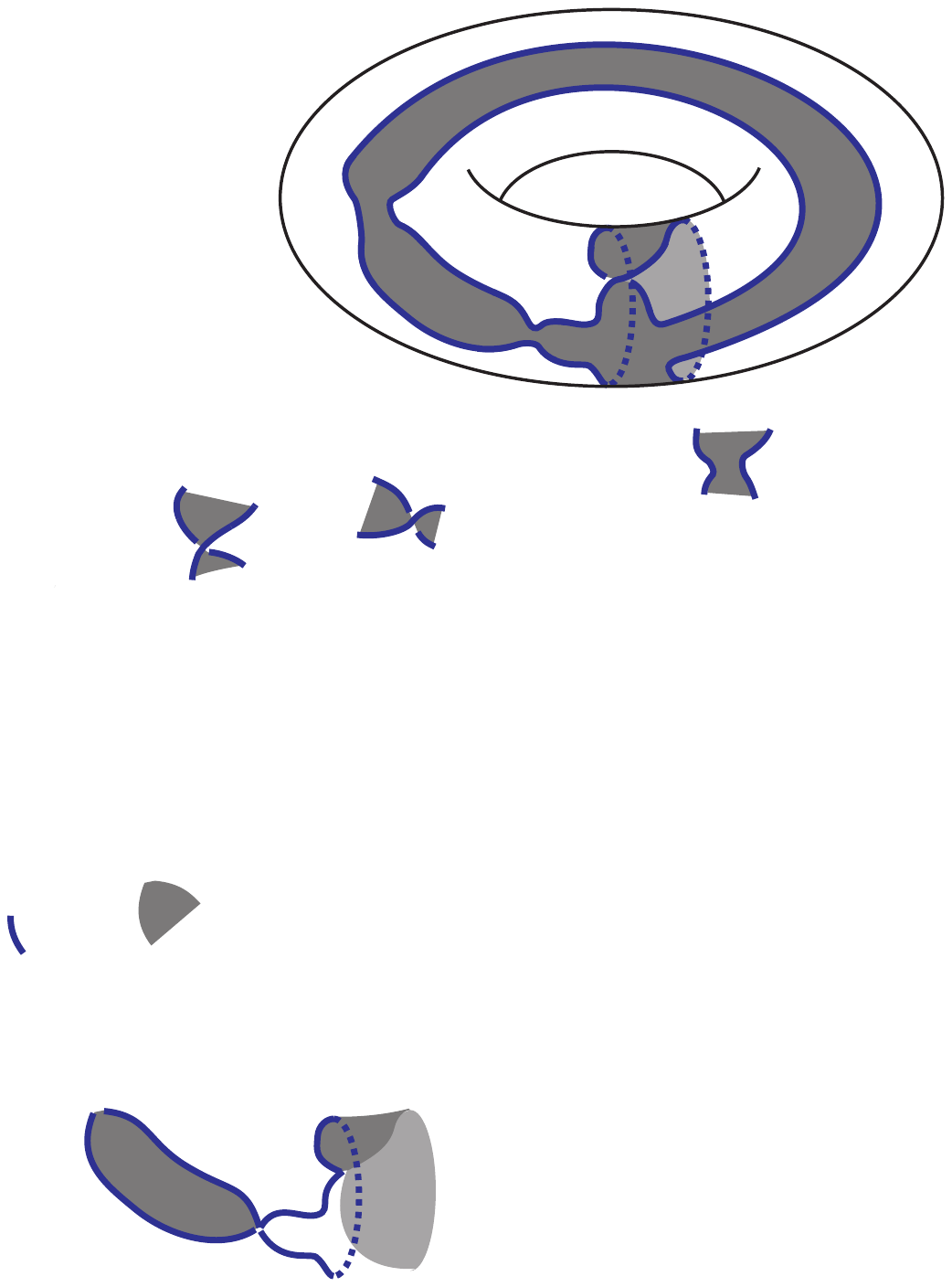} \\
 $+\lambda$ && $-\lambda$  &&$- \lambda$  && $- \lambda$ \\
 && && && \\
\end{tabular}
\begin{tabular}{ c c c c c c c}
\includegraphics[height=20mm]{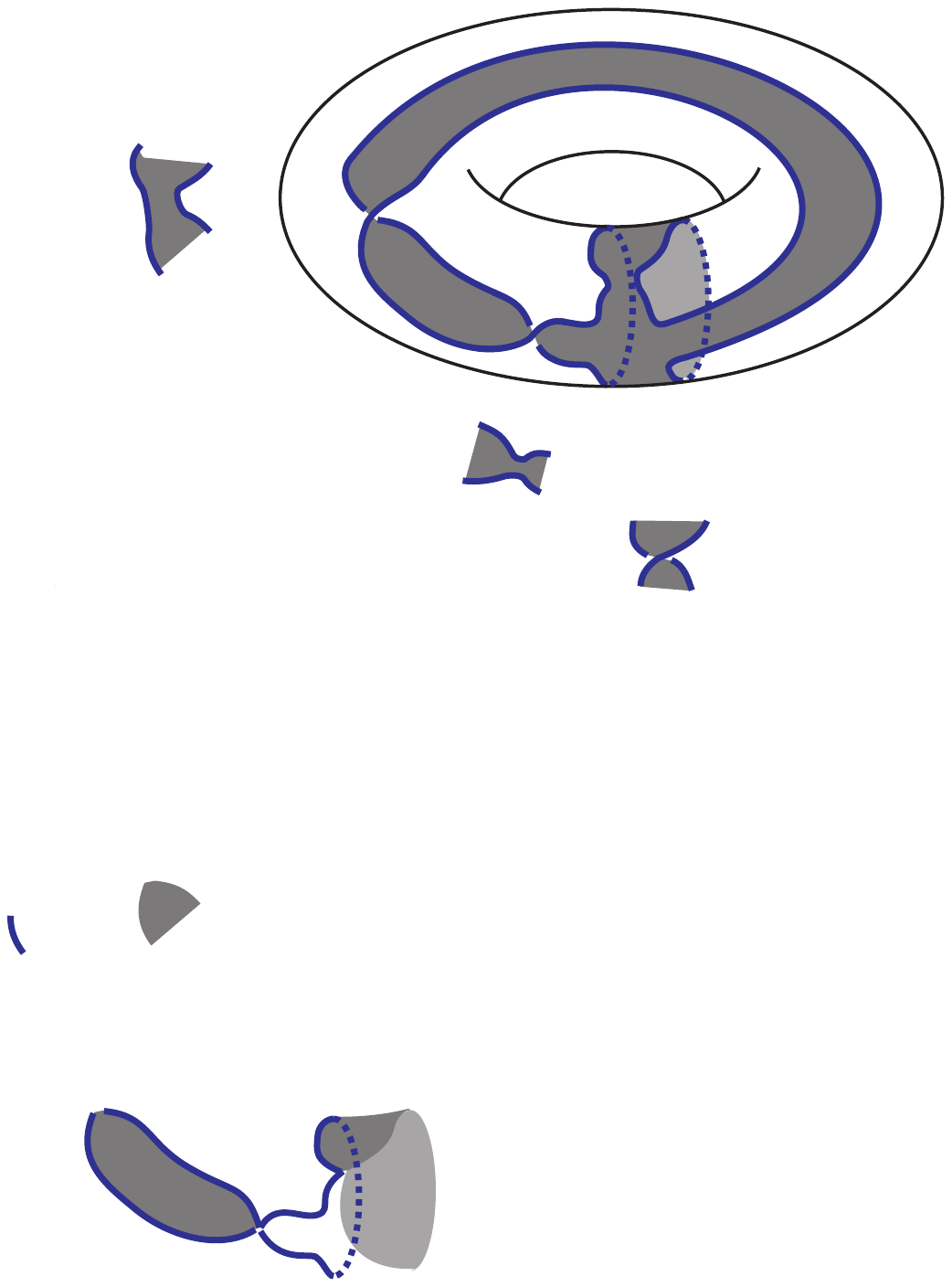}  &&\includegraphics[height=20mm]{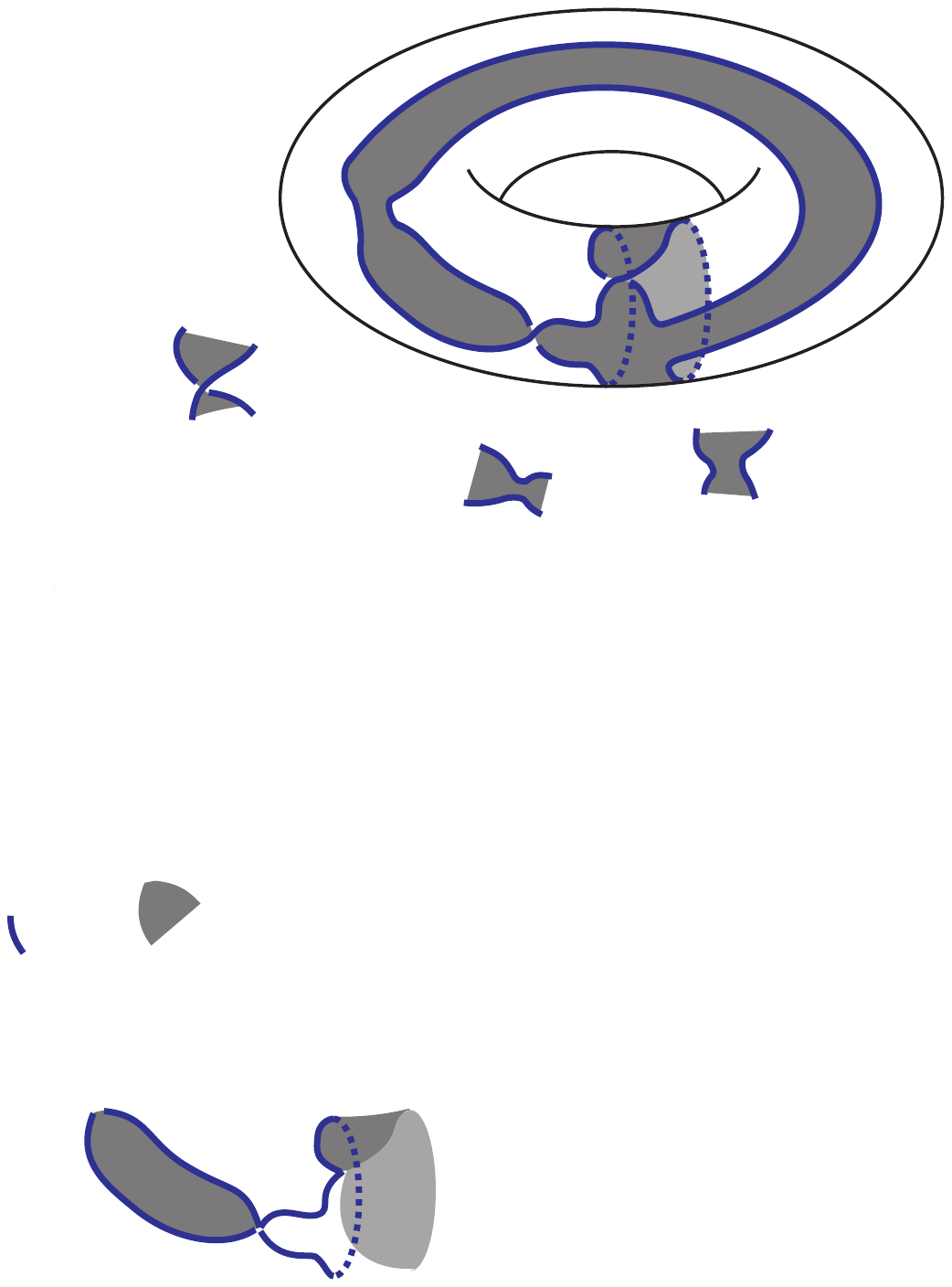} &&  \includegraphics[height=20mm]{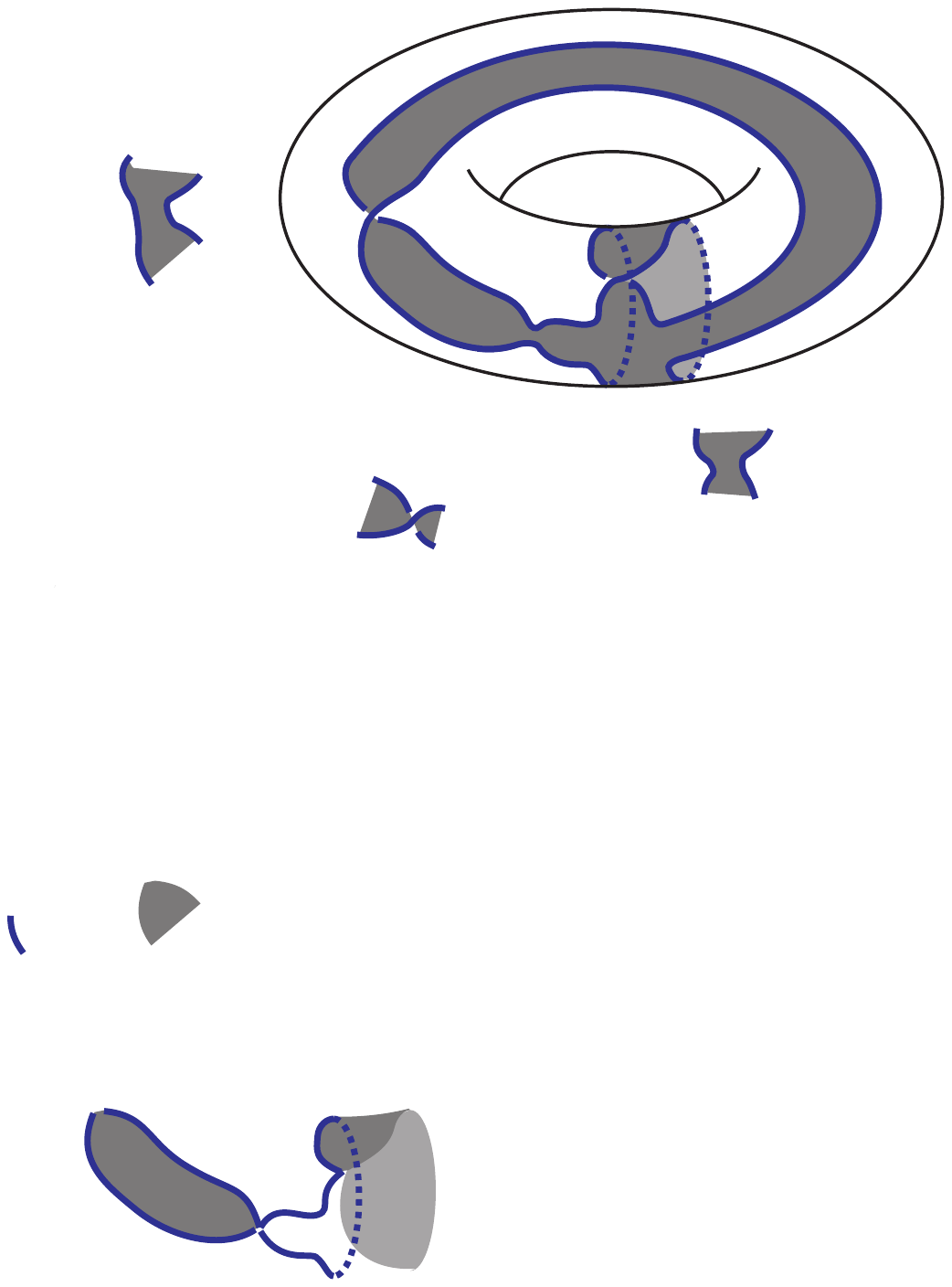}&&\includegraphics[height=20mm]{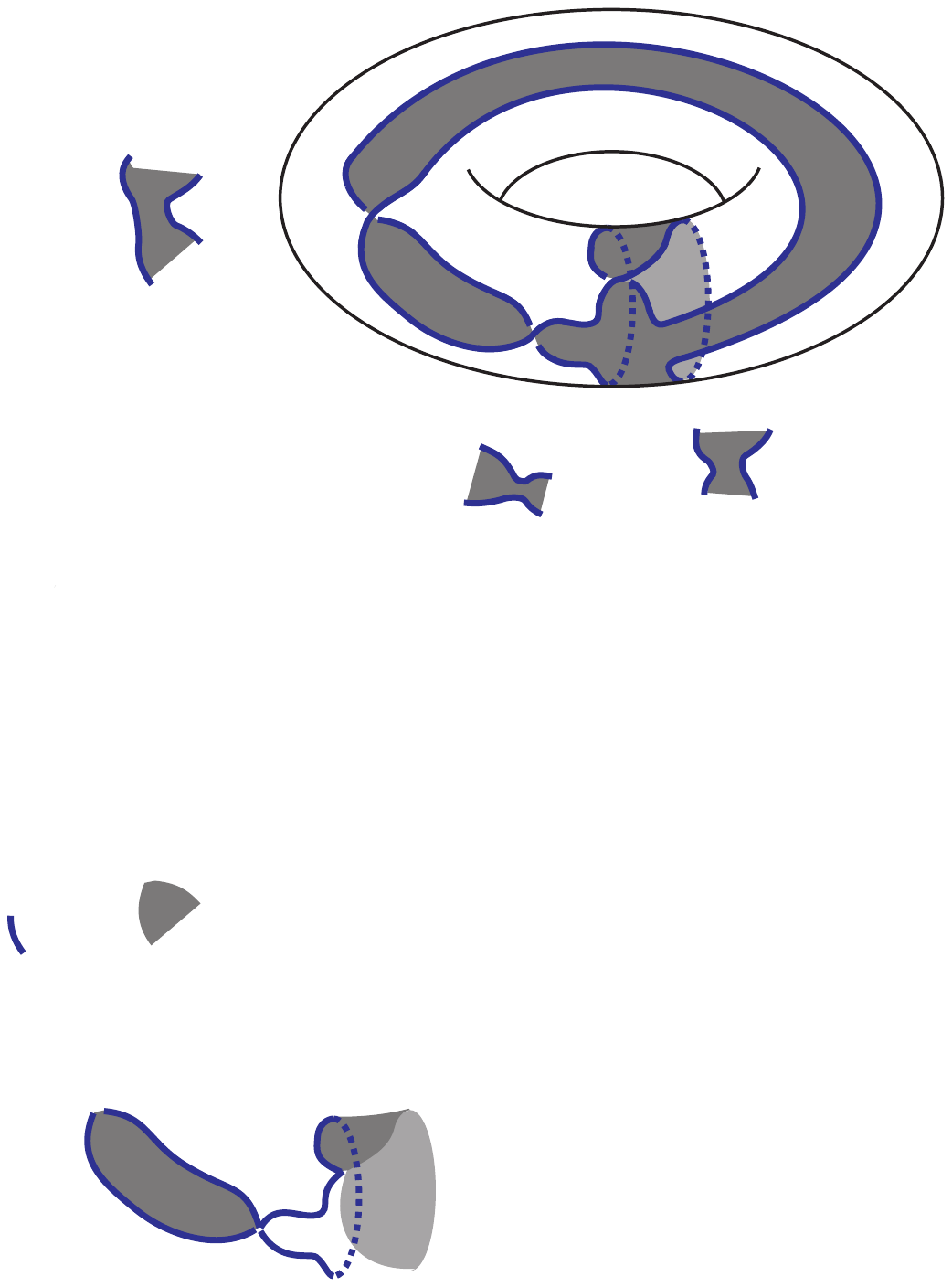}  \\
$+\lambda$ &&$+\lambda^2$  && $+\lambda^2$&& $-\lambda$ \\
 && && && \\
\end{tabular}
\end{center}
Thus, $P(G;\lambda)=  2\lambda^2-2\lambda$.

\end{example}

The
 Penrose polynomial of a plane graph may also be computed via a linear recursion
 relation (see Jaeger~\cite{Ja90}), by repeatedly  applying the skein relation 
 \begin{equation}\label{e.skP}
\quad \quad
\raisebox{-5mm}{\includegraphics[height=12mm]{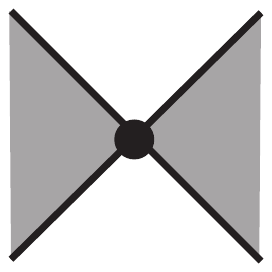}}\;\;\; = \;\; \;
\raisebox{-5mm}{\includegraphics[height=12mm]{m2}}\;\;\;
- \;\;\;\raisebox{-5mm}{\includegraphics[height=12mm]{m4}},
 \end{equation}
  to vertices of degree 4 in $G_m$, and at the end, evaluating each of the resulting cycles to $x$.  This extends \emph{mutis mutandis} to embedded graphs and the topological Penrose polynomial.

\subsection{The relation with the transition polynomial}\label{ss.trans}
There is a very useful relation, due to Jaeger~\cite{Ja90}, between the original Penrose  and transition polynomials.   Here we will show that this relation  extends to a relation between the topological Penrose polynomial  and the topological transition polynomial given in \cite{E-MMe}. Many of the results presented in this paper rely on this relation between these two topological graph polynomials.

The generalized transition polynomial, $q(G; W,t)$, of \cite{E-MS02}  extends  the  transition polynomial of Jaeger~\cite{Ja90} to arbitrary Eulerian graphs and incorporates pair and vertex state weights. For the current application, however, we will restrict $q$ to $4$-regular embedded graphs (typically medial graphs) and we will only work in the generality needed for our current application.  See \cite{E-MS02} or \cite{ES} for further details on the generalized transition polynomial. 

\begin{definition}[\cite{E-MS02}]\label{transpolydef} Let $F$ be a $4$-regular graph having weight system $W$ with values in a unitary ring $\mathcal{R}$.  Then  the {\em generalized transition polynomial} is 
\[
q(F; W,t)= \sum_{s} {\omega( s )t^{c(s)} },
\]  
where the sum  is over all graph states $s$ of $F$.  
\end{definition}  
Because we are interested in applications to the Penrose polynomial, we specialize further to embedded medial graphs and to medial weight systems and the \emph{topological transition polynomial} of \cite{E-MMe}, defined as follows.
\begin{definition}[\cite{E-MMe}]
 Let $G$ be an embedded graph with canonically checkerboard coloured embedded medial graph $G_m$, and let $W_m(G_m)=(\boldsymbol\alpha, \boldsymbol\beta, \boldsymbol\gamma)$ be a medial weight system. Then the \emph{topological transition polynomial} of $G$ is:
\[
Q(G, (\boldsymbol\alpha, \boldsymbol\beta, \boldsymbol\gamma), t) :=q(G_m; W_m,t).
\] 
\end{definition}

We will also use the following recursive formulation of the topological transition polynomial, which follows from its being a specialization of the generalized transition polynomial, which has such a recursion (see \cite{E-MS02}).
We use the notation that if $v$ is a vertex of $G_m$, then  $(G_m)_{bl(v)}$, $(G_m)_{wh(v)}$, and $(G_m)_{cr(v)}$ denote the graphs obtained by  taking a black, white, or crossing state, respectively,  at the vertex $v$.
\begin{proposition}\label{p.recQ}
The topological transition polynomial may be computed by repeatedly applying  the following linear recursion relation at each $v\in V(G_m)$, and, when there are no more vertices of degree 4 to apply it to, evaluating each of the resulting closed curves to an independent variable $t$: 
\[
q(G_m, W_m,t)= \alpha_v q((G_m)_{wh(v)}, W_m,t)+ \beta_v q((G_m)_{bl(v)}, W_m,t)+ \gamma_v q((G_m)_{cr(v)}, W_m,t).
\]
\end{proposition}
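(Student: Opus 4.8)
The plan is to unwind the definition $q(G_m;W_m,t)=\sum_{s}\omega(s)t^{c(s)}$, in which the sum ranges over all graph states of $G_m$, and to group the states according to the vertex state they assign to a fixed $v\in V(G_m)$. Since a graph state is, by definition, a choice of one vertex state at each vertex, $St(G_m)$ is the disjoint union $St_{wh}(v)\sqcup St_{bl}(v)\sqcup St_{cr}(v)$ of the sets of states whose vertex state at $v$ is the white split, the black split, or the crossing, respectively (the canonical checkerboard colouring of $G_m$ being used to name these, exactly as in~\eqref{e.splits}).

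The first step is to exhibit, for each $\ast\in\{wh,bl,cr\}$, a natural bijection between $St_{\ast}(v)$ and $St((G_m)_{\ast(v)})$: once $v$ has been resolved according to the $\ast$-state, what remains of a state of $G_m$ is precisely a choice of vertex state at each of the remaining vertices, and these are exactly the degree-$4$ vertices of $(G_m)_{\ast(v)}$ (which, in general, is a $4$-regular embedded graph together with some vertex-free closed curves). The substantive point here is the observation already recorded in Subsection~\ref{ss.ws}---that the family of edge-disjoint cycles determined by a graph state is obtained by carrying out all of the indicated resolutions---from which it is immediate that carrying out the resolution at $v$ first and then completing the others produces the same family; hence the bijection preserves the number of components, so that $c(s)=c(s')$ whenever $s$ and $s'$ correspond.

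Next, for $s\in St_{\ast}(v)$ with image $s'$, the state weight factorizes as $\omega(s)=\omega(v,s)\cdot\prod_{u\neq v}\omega(u,s)$, where $\omega(v,s)$ equals $\alpha_v$, $\beta_v$, or $\gamma_v$ according to $\ast$, and $\prod_{u\neq v}\omega(u,s)=\omega(s')$ computed with respect to the weight system on $(G_m)_{\ast(v)}$ inherited from $W_m$. (Resolving a vertex of a checkerboard-coloured $4$-regular graph preserves the checkerboard colouring, so the triples $(\alpha_u,\beta_u,\gamma_u)$ for $u\neq v$, and hence $W_m$ itself, carry over unchanged.) Summing over the three classes gives
\[
q(G_m;W_m,t)=\sum_{\ast}\ \sum_{s\in St_{\ast}(v)}\omega(s)\,t^{c(s)}=\alpha_v\, q((G_m)_{wh(v)};W_m,t)+\beta_v\, q((G_m)_{bl(v)};W_m,t)+\gamma_v\, q((G_m)_{cr(v)};W_m,t),
\]
which is the claimed recursion. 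Iterating it at each vertex of $G_m$ in turn (formally, by induction on $|V(G_m)|$) terminates in graphs with no vertices of degree $4$, that is, disjoint unions of, say, $k$ closed curves; such a graph has a single, empty, state, of weight $1$ (the empty product) and with $k$ components, so its transition polynomial is $t^{k}$---precisely the instruction to evaluate each closed curve to $t$. The only step that requires care is the bijection, and within it the identity $c(s)=c(s')$; as noted, this is immediate from the description of graph states as families of edge-disjoint cycles. Alternatively---and this is the route hinted at in the sentence preceding the statement---one may simply invoke the analogous recursion for the generalized transition polynomial established in~\cite{E-MS02}, of which Proposition~\ref{p.recQ} is the special case $F=G_m$, $W=W_m$, once $(G_m)_{wh(v)}$, $(G_m)_{bl(v)}$, and $(G_m)_{cr(v)}$ are identified with the graphs produced by the three elementary vertex-splitting operations appearing there.
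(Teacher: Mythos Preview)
Your argument is correct: partitioning $St(G_m)$ according to the vertex state at $v$, observing the resulting bijection with $St((G_m)_{\ast(v)})$, and using the multiplicativity of state weights is exactly the right way to obtain the recursion, and your treatment of the terminal case is fine.

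The paper itself does not give a proof of this proposition at all; it simply remarks, in the sentence immediately preceding the statement, that the recursion ``follows from its being a specialization of the generalized transition polynomial, which has such a recursion (see~\cite{E-MS02}).'' You mention this route yourself at the end of your proposal. So your direct argument is not a different approach so much as an explicit unpacking of what the citation to~\cite{E-MS02} is covering; it is more self-contained than what the paper provides, which is a virtue if the goal is a stand-alone proof.
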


The recursion relation in Proposition~\ref{p.recQ} has the following pictorial presentation:
\begin{equation}\label{e.skT} \includegraphics[height=14mm]{m1v}\;\; \raisebox{7mm}{$=$} \;\; \raisebox{7mm}{$\alpha_v$} \; 
\includegraphics[height=14mm]{m2}
\raisebox{7mm}{$\;+ \;\; \beta_v$} \;\; 
\includegraphics[height=14mm]{m3}
\raisebox{7mm}{$\;+ \;\;\; \gamma_v$}\;\;\includegraphics[height=14mm]{m4}\raisebox{7mm}{.}
 \end{equation}

A comparison between the skein relation for the Penrose polynomial in Equation~\eqref{e.skP}, and the skein relation for the topological transition polynomial in Equation~\eqref{e.skT}
 immediately makes clear how the Penrose polynomial is related to the topological transition polynomial: 
\begin{proposition}\label{p.petr}
Let $G$ be an embedded graph and $G_m$ be its canonically checkerboard coloured medial graph.  Then   
\[   P(G;\la)= q(G_m; W_P, \la) =Q(G; (\boldsymbol1, \boldsymbol0,\boldsymbol{-1}), \la) ,\]
where $Q$ is the topological transition polynomial.
\end{proposition}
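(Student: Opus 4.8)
The plan is to observe that the statement is essentially a tautology once the two definitions are unravelled, so the "proof" is a direct verification that the defining sum of $P(G;\lambda)$ coincides with the defining sum of $Q(G;(\boldsymbol 1,\boldsymbol 0,\boldsymbol{-1}),\lambda)$. First I would recall from Definition~\ref{transpolydef} and the definition of the topological transition polynomial that
\[
Q(G;(\boldsymbol\alpha,\boldsymbol\beta,\boldsymbol\gamma),t)=q(G_m;W_m,t)=\sum_{s\in St(G_m)}\omega(s)\,t^{c(s)},
\qquad \omega(s)=\prod_{v\in V(G_m)}\omega(v,s),
\]
where, for the medial weight system $(\boldsymbol\alpha,\boldsymbol\beta,\boldsymbol\gamma)$, the vertex weight $\omega(v,s)$ equals $\alpha_v$, $\beta_v$, or $\gamma_v$ according to whether $s$ uses a white split, black split, or crossing at $v$.

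Next I would specialize to $(\boldsymbol\alpha,\boldsymbol\beta,\boldsymbol\gamma)=(\boldsymbol 1,\boldsymbol 0,\boldsymbol{-1})$, i.e. to the weight system $W_P$ of Definition~\ref{d.Penrose}. With these weights, each vertex contributes $1$ for a white split, $0$ for a black split, and $-1$ for a crossing. Hence any state $s$ that uses at least one black split has $\omega(s)=0$ and contributes nothing to the sum; the only states with nonzero weight are exactly the Penrose states $s\in\mathcal P(G_m)$, those with no black splits. For such a state, every vertex contributes either $1$ (white split) or $-1$ (crossing), so $\omega(s)=(-1)^{cr(s)}$, where $cr(s)$ counts the crossing vertex states of $s$. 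Substituting gives
\[
q(G_m;W_P,\lambda)=\sum_{s\in St(G_m)}\omega(s)\lambda^{c(s)}=\sum_{s\in\mathcal P(G_m)}(-1)^{cr(s)}\lambda^{c(s)},
\]
which is precisely the right-hand expression in Definition~\ref{d.Penrose} for $P(G;\lambda)$. This establishes $P(G;\lambda)=q(G_m;W_P,\lambda)=Q(G;(\boldsymbol 1,\boldsymbol 0,\boldsymbol{-1}),\lambda)$.

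There is essentially no obstacle here: the content is bookkeeping, namely matching the two sums term by term via the "zero-weight kills black splits" observation. If one prefers a recursion-based argument instead, the alternative is to note that the skein relation~\eqref{e.skP} for $P$ is the $(\alpha_v,\beta_v,\gamma_v)=(1,0,-1)$ instance of the skein relation~\eqref{e.skT} for $q$ (Proposition~\ref{p.recQ}), and both polynomials assign $\lambda$ to each resulting closed curve and agree on the base case of a collection of disjoint closed curves; since each is uniquely determined by its recursion together with its value on the base case, they coincide. I would present the state-sum verification as the main argument and mention the skein-relation viewpoint as the remark that motivates the proposition, since the excerpt has already flagged that "a comparison between the skein relations immediately makes clear" the relationship.
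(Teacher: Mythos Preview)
Your proposal is correct and matches the paper's treatment: the paper does not give a formal proof of this proposition, merely remarking that a comparison of the skein relations~\eqref{e.skP} and~\eqref{e.skT} makes the identity immediate. Your state-sum verification is just the unwound version of that same observation, and your closing remark about the skein-relation argument is exactly the paper's one-line justification.
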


\subsection{Some distinctions between the classical and topological Penrose polynomials}\label{distinct}

The Penrose polynomial of a plane graph is known to satisfy several combinatorial identities and has numerous connections with graph colouring. It is natural to ask which of these properties hold for arbitrary embedded graphs. 
We begin by observing that many of the basic properties of the Penrose polynomial of a plane graph given by Penrose in  \cite{Pen71}, Jaeger in \cite{Ja90}, and Aigner in  \cite{Ai97} do not hold for non-plane graphs.
For example, the following properties for plane graphs provided by Aigner in \cite{Ai97} do not hold for embedded graphs in general, although we will see in Section \ref{s.cpp} that some of them do hold for orientable checkerboard colourable graphs. 
  Some small counterexamples to  non-plane extensions of these properties are provided by: $F_1 = $\raisebox{-2mm}{\includegraphics[height=6mm]{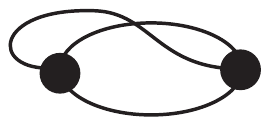}} with 
  $P(F_1; \la)=-\la^3+3\la^2-2\la$; 
  by  $F_2 = $\raisebox{-2mm}{\includegraphics[height=6mm]{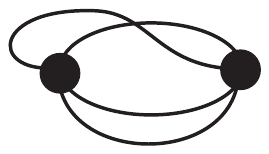}} with 
  $P(F_2; \la)=\la^3-\la$; and 
  by  $F_3 = $\raisebox{-2mm}{\includegraphics[height=6mm]{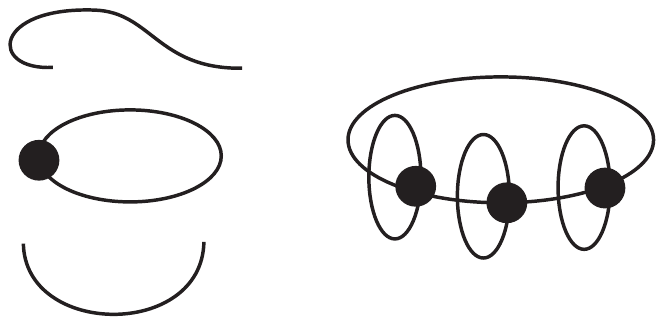}} with 
  $P(F_3; \la)=4(\la^2-\la)$.   
  $F_1$ gives counter examples to Items  \ref{faces},  \ref{3colour}, 
  while $F_2$ gives counter examples to Items \ref{verts}, \ref{edges}, \ref{twoface} and \ref{faces}, 
  and $F_3$ to Items  \ref{edges}, and \ref{notwoface}.  For Item~\ref{bridge}, if $G$ is an embedded graph that contains a bridge, the $P(G;\la)=0$, but Example~\ref{hemi} will give an example of a bridgeless, non-plane embedded graph with trivial Penrose polynomial.  
\begin{enumerate}
 \item \label {verts} If $G$ is plane and Eulerian then $P(G;2) =2^{v(G)}$.
\item \label{edges} If $G$ is plane and Eulerian then $P(G;-1) =(-1)^{f(G)}2^{e(G)}$.
\item  \label{twoface}If $G$ is plane and edges $e,f$ are both on the same two faces, then $P(G; \la)=2P(G/e;\la)$.  
\item \label{notwoface} If $G$ is plane, 2-connected, and has no two faces sharing more than one edge, then the leading term of $P(G;\la)$ is $1$. 
\item  \label{faces} If $G$ is plane then the degree of $P(G;\la)$ is the number of faces of $G$. 
\item \label{3colour} If $G$ is plane, cubic and connected, then $P(G;3)$ counts the number of edge 3-colourings. 
\item \label{bridge} If $G$ is a plane graph, then $G$ contains a bridge if and only if $P(G;\la)=0$.
\end{enumerate}

The Penrose polynomial also  has  the following properties for plane graphs:
 if $G$ is Eulerian, the coefficients of $P(G;\la)$ alternate in sign; and  if $G$ is non-Eulerian,  $|P(G;-1)| <2^{e(G)}$. We would like to know if these properties hold more generally.  A natural extension is to characterize the embedded graphs for which the  properties in the above list do hold or give more inclusive combinatorial interpretations for these evaluations of the Penrose polynomial.

\section{Twisted duality}\label{s.td}
In this section we provide an overview of twisted duality and the ribbon group action which were introduced by the authors in \cite{E-MMe} to unravel the connections between graphs and their medial graphs, and  to solve isomorphism problems for $4$-regular graphs. The ribbon group action and the results described  in this section provide the tools that we use to determine properties the Penrose polynomial. 

Unless otherwise stated, all of the results in this section appear in \cite{E-MMe}, and their proofs and further details  may be found therein.

\subsection{Twisted duals and the ribbon group}\label{ss.td}
In this section we provide an overview of twisted duals and the ribbon group action. The ribbon group action is a far reaching generalization of the idea of the geometric dual  of an embedded graph. The importance of the ribbon group lies in that it provides new connections, and a new understanding of the relationship between graphs and their medial graphs. For example, \cite{E-MMe} shows that two medial graphs $G_m$ and $H_m$ are isomorphic as abstract graphs if and only if $G$ and $H$ are twisted duals. Here we will focus on the way that the ribbon group interacts with graph polynomials, such as the Penrose polynomial, that can be defined through medial graphs.

Let $\calG_{(n)}$ denote the set of equivalence classes of embedded graphs with exactly $n$ edges.  (Recall that the equivalence relation here is generated by homeomorphism of surfaces.)
 Furthermore, let
\[\calG_{or (n)} = \left\{  (G,\ell)| G \in \calG_{(n)} \text{ and } \ell \text{ is a linear ordering of the edges} \right\}\] be the set of equivalence classes of embedded graphs with  exactly $n$ ordered edges. As is standard, we will abuse notation and identify a representative of an equivalence class with the class itself.

We will define two operations, a half-twist and a dual,  that act on a specified edge of an embedded graph. These operations generate an action, called the ribbon group action, of the group $\fS^n$ on the set $\calG_{or (n)}$, where $\fS$ is the symmetric group of degree three. 

\begin{definition}\label{def.ops}
Let $(G,\ell) \in \calG_{or(n)}$ and suppose $e_i$ is the $i^\text{th}$ edge in the ordering $\ell$. Also, suppose $G$ is given in terms of its arrow presentation, so $e_i$ is a label of a pair of arrows.  

The \emph{half-twist of the $i^{\text{th}}$ edge} is $(\tau, i)(G,\ell)=(H,\ell)$ where $H$ is obtained from $G$  by reversing the direction of exactly one of the $e_i$-labelled arrows of the arrow presentation. $H$ inherits its edge order $\ell$ in the natural way from $G$.  

The \emph{dual with respect to the $i^{\text{th}}$ edge} is $(\delta,i) (G,\ell) = (H,\ell)$, where $H$ is obtained from $G$ as follows.  Suppose $A$ and $B$ are the two arrows labelled $e_i$ in the arrow presentation of $G$.  Draw a line segment with an arrow on it directed from the the head of $A$ to the tail of $B$, and a line segment with an arrow on it directed from the head of $B$ to the tail of $A$.  Label both of these arrows $e_i$, and delete $A$ and $B$ with the arcs containing them. The line segments with their arrows become arcs of a new circle (or circles) in the arrow presentation of $H$.  As with the twist, $H$ inherits its edge order $\ell$ from $G$.

\end{definition}

The actions of $(\tau,i)$ and $(\delta,i)$ on an arrow presentation are illustrated thus:
\[  \tau\left( \;\; \raisebox{-4mm}{\includegraphics[height=10mm]{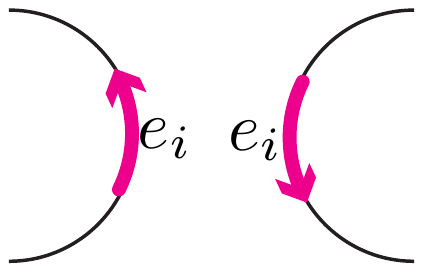}}\;\; \right) \; = \; \;\raisebox{-4mm}{\includegraphics[height=10mm]{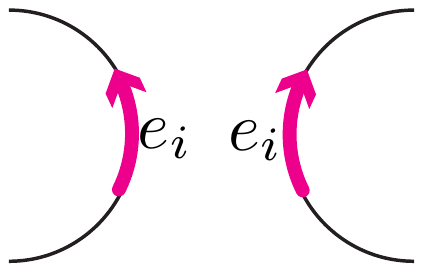}} 
\hspace{1cm} \raisebox{0mm}{\text{and}}\hspace{1cm}
\delta\left( \;\; \raisebox{-4mm}{\includegraphics[height=10mm]{a1ei}} \;\; \right) \; = \; \; \raisebox{-4mm}{\includegraphics[height=10mm]{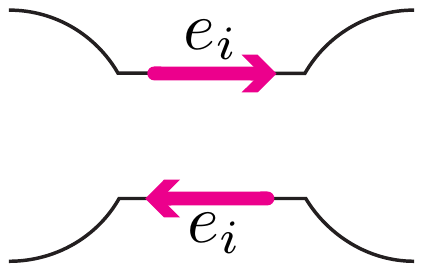}}. \]
Examples of the action of $(\tau,i)$ and $(\delta,i)$ can be found in Example~\ref{e.pd} below.

It was shown in \cite{E-MMe} that, for each $i$,  we have ${(\tau^2, i)}(G,\ell)={ (\delta^2,i)}(G,\ell)=  ({(\tau \delta)}^3, i)(G,\ell)=1(G,\ell)$.  Consequently,  the group \[ \fG := \langle  \delta, \tau   \; |\;   \delta^2, \tau^2, (\tau \delta)^3   \rangle ,\]
which is isomorphic to the symmetric group of order three, acts on the set $\calG_{or(n)}$. 

This action readily extends to a group action of $\fS^n$ on  $\calG_{or (n)}$ by allowing the twists and partial duals to act on any subset of edges, not just a single distinguished edge. We 
call $\fG^{n}$ the \emph{ribbon group for $n$ edges} and define the \emph{ribbon group action} to be the action of  $\fG^{n}$ on $\calG_{or(n)}$ given by:
\begin{eqnarray*} ( \g_1, \g_2,\g_3, \ldots , \g_n )(G,\ell) &= (\g_n,n)((\g_{n-1}, n-1) \ldots ((\g_2,2)(( \g_1,1)(G,\ell))) \ldots) \\
&=  ( (\g_n,n)\circ(\g_{n-1}, n-1)\circ \ldots \circ (\g_2,2) \circ ( \g_1,1)   ) (G,\ell) 
,\end{eqnarray*}
where  $\g_i \in \fG$ for all $i$.

We make the simple but important observation that these operations when applied to different edges commute:
if $i \neq j$ and $\g, \h \in \{ \tau, \delta \}$, then $(\g,i) ((\h,j)(G, \ell)) = (\h,j)((\g,i)(G,\ell))$.
However, in general,  $(\g, i)$ and $(\h,i)$ do  not commute.

We can now define twisted duals to be embedded graphs that are related under the ribbon group action.
\begin{definition}
Two embedded graphs with linearly ordered edges $(G,\ell)$ and $(G', \ell')$ are said to {\em
twisted duals} if (the equivalence class of)   $(G', \ell')$ is in the orbit of (the equivalence class of) $(G,\ell)$ under the ribbon group action.

Two embedded graphs $G$ and $G'$ are said to be {\em twisted duals} if $(G,\ell)$ and $(G', \ell')$ are twisted duals for some edge orderings $\ell$ and $\ell'$ of $G$ and $G'$ respectively.
\end{definition}

As an example, all of the embedded graphs shown in Example~\ref{e.pd} are twisted duals. 

\medskip

In this paper, we will focus on twisted duality for embedded graphs without edge ordering. In \cite{E-MMe}, it was shown that every twisted dual of an embedded graph $G$ can be written in the form $G^{\prod{\g_i(A_i)}}$. For our purposes, this notation  is particularly efficient and we will now give a quick exposition of it. 
In order to introduce the notation $G^{\prod{\g_i(A_i)}}$, we need to first introduce the notation $G^{\g(A)}$. To do this, 
suppose that $G \in \calG_{(n)}$, $A \subseteq E(G)$ and  $\g \in \fG$.  Then  $G^{\g(A)}$ is defined by letting $\ell$ be an arbitrary ordering $(e_1, \ldots, e_n)$ of the edges of $G$; and $\vg_{A} :=(\epsilon_1, \ldots, \epsilon_n) \in \fG^n$, where $\epsilon_i = \g$ if $e_i \in A$ and $\epsilon_i=1$ otherwise. With this, we can then define 
\[
G^{\g(A)}:=\vg_{A}(G,\ell).
\] 
If, in addition,  $ B \subseteq E(G)$ and  $\h \in \fG$ then we set
$
G^{\g(A)\h(B)}:=(G^{\g(A)})^{\h(B)}$, and
$G^{\g\h(A)}:=G^{\h(A)\g(A)}$.

With this notation, it can be shown (see Proposition~3.7 of \cite{E-MMe}) that every twisted dual admits a unique expression of the form 
\begin{equation}\label{e.ga}G^{\prod^6_{i=1}{\g_i(A_i)}}
,\end{equation}
where the $A_i$ partition $E(G)$, and where $\g_1=1, \g_2=\tau , \g_3=\delta , \g_4=\tau \delta , \g_5= \delta \tau$, and  $\g_6 = \tau \delta \tau \in \fG$. 

To illustrate the ribbon group action, we recall the following example from \cite{E-MMe}.
 \begin{example}\label{e.pd}
 If $G$ is an embedded graph with $E(G)=\{e_1,e_2\}$, with the order $(e_1,e_2)$, represented as an arrow presentation and as a ribbon graph shown below,
\[
\raisebox{8mm}{$G =$}   \;\;\raisebox{1mm}{\includegraphics[width=30mm]{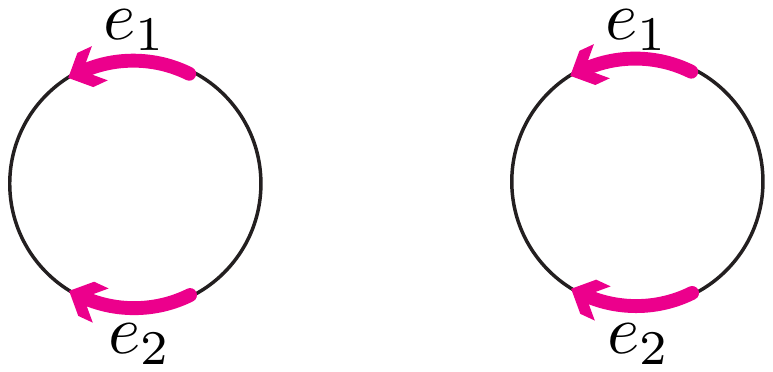}} \; \;\raisebox{8mm}{$=$} \;  \; \raisebox{0mm}{\includegraphics[width=34mm]{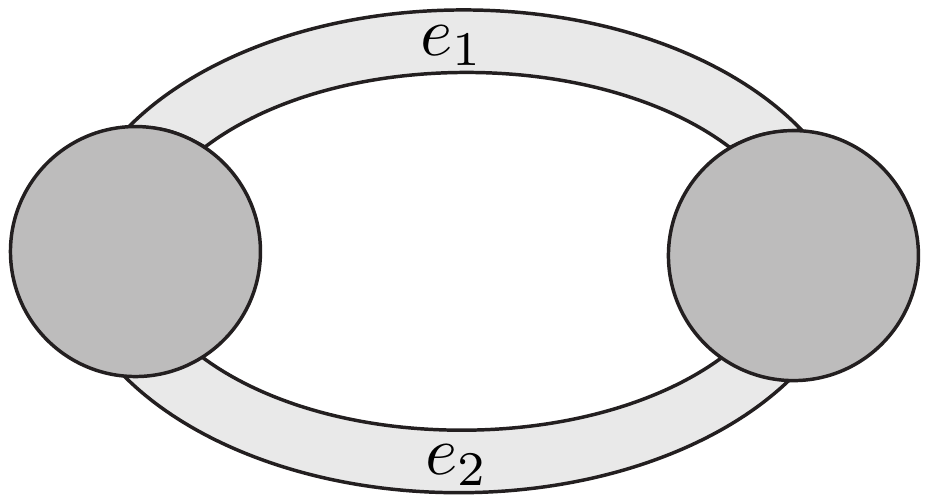}} \;\raisebox{8mm}{,}
\]
then we have
\[
\raisebox{8mm}{$(\tau, 1)(G) =G^{\tau(e_1)} =$}   \;\;\raisebox{1mm}{\includegraphics[width=30mm]{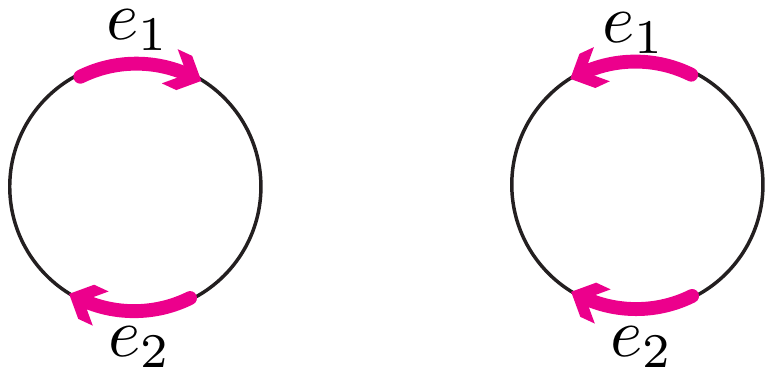}} \; \;\raisebox{8mm}{$=$} \;  \; \raisebox{0mm}{\includegraphics[width=34mm]{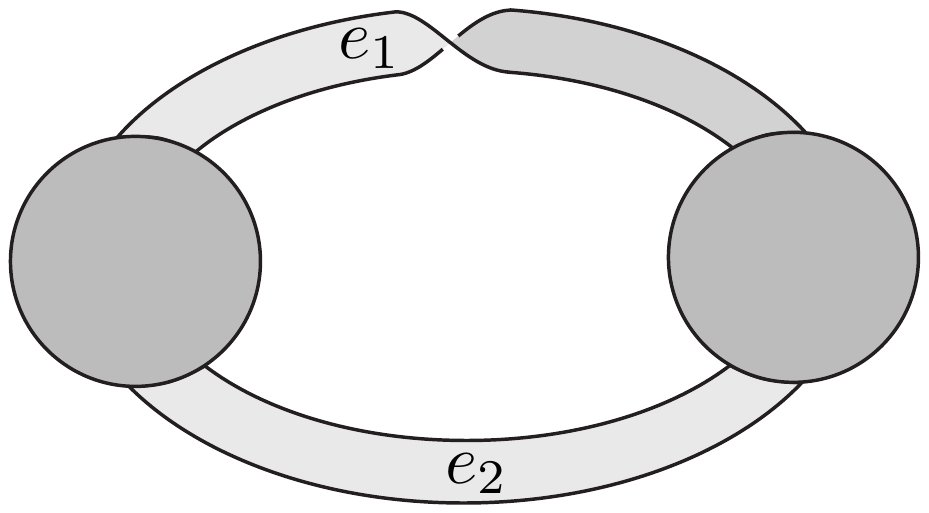}} \;\raisebox{8mm}{,}
\]
and 
\[
\raisebox{8mm}{$(\delta , 1)(G) =G^{\delta(e_1)} =$}   \;\;\raisebox{4mm}{\includegraphics[width=30mm]{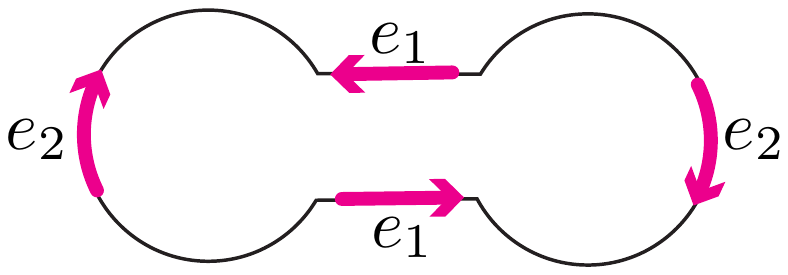}} \; \;\raisebox{8mm}{$=$} \;  \; \raisebox{1mm}{\includegraphics[width=24mm]{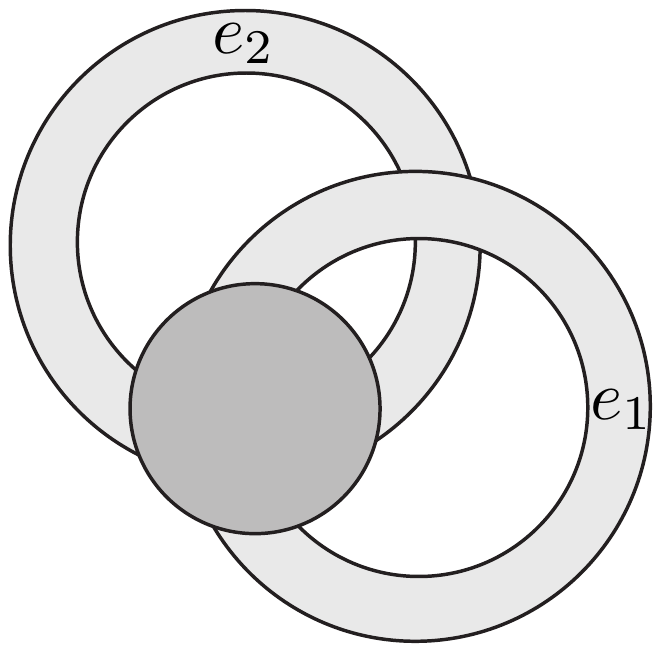}} \;\raisebox{8mm}{.}
\]

The full orbit of $G$ is 
\[\includegraphics[height=4cm]{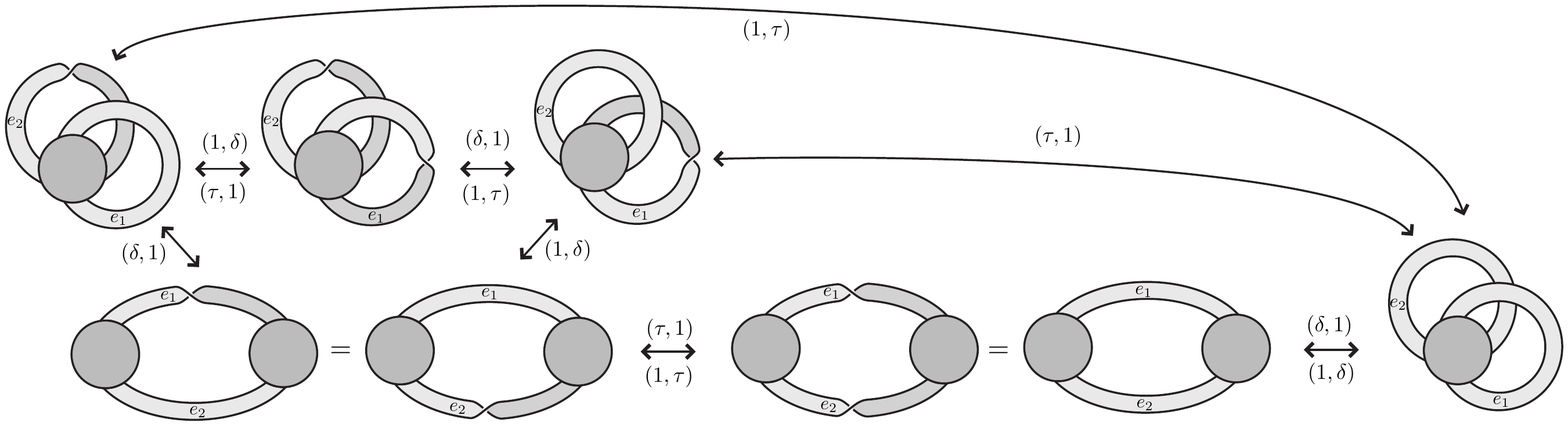}\]
\end{example}

\begin{example}
If $G= \raisebox{-8mm}{\includegraphics[height=2.2cm]{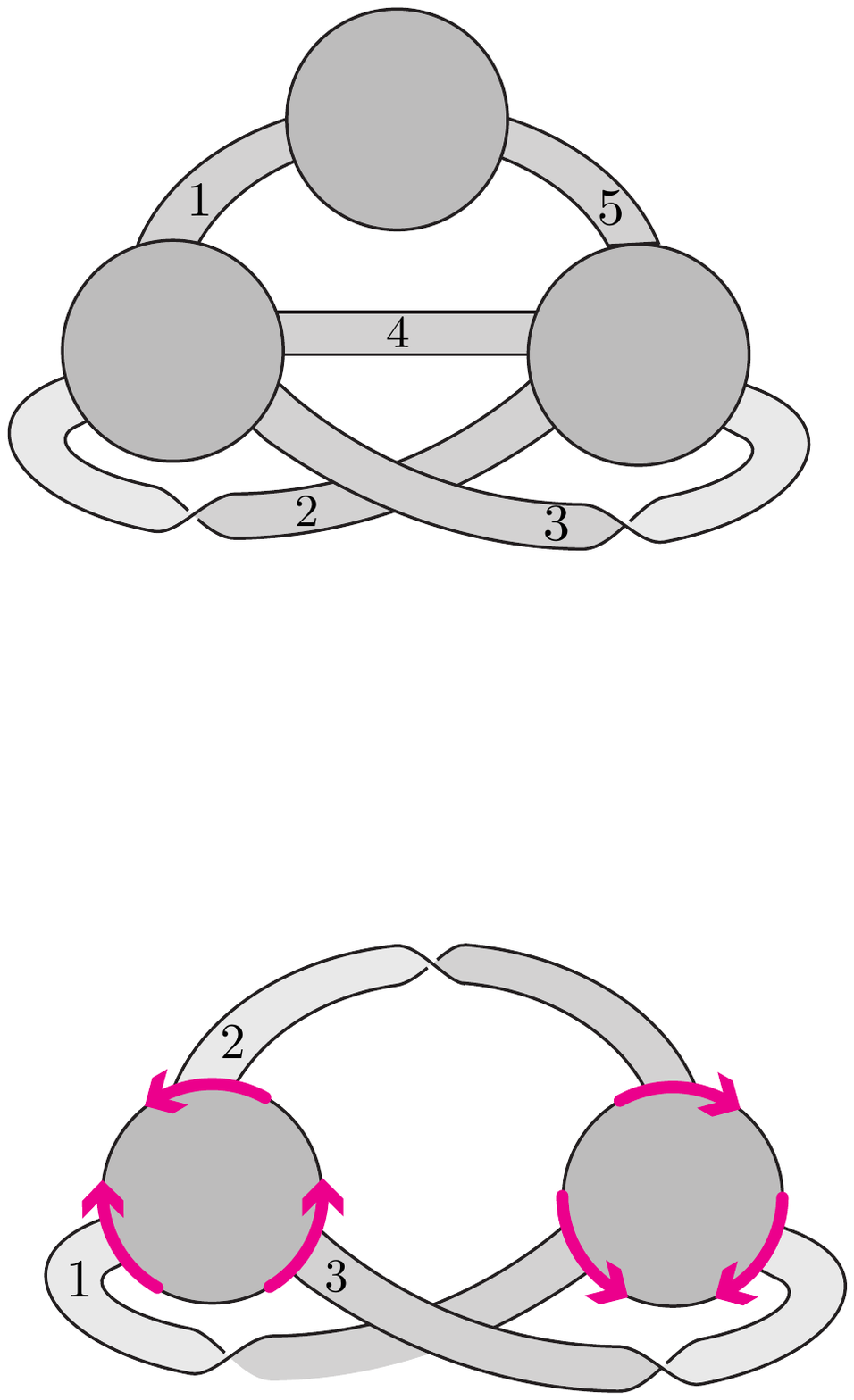}}$, then 
\[G^{\tau(3)\delta(4)\delta\tau(2)   }= \raisebox{-7mm}{\includegraphics[height=1.8cm]{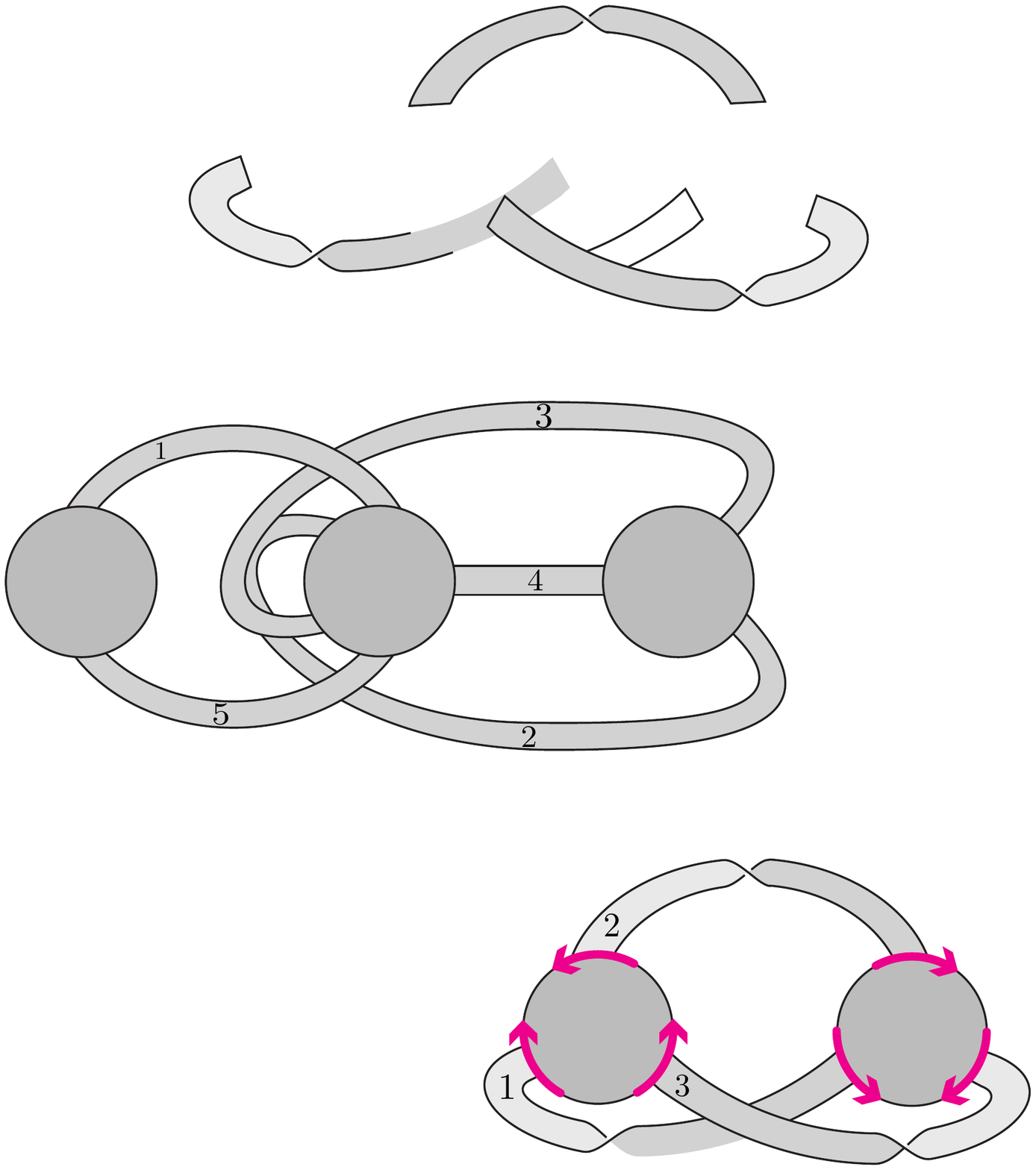}} \quad\text{and} \quad
       G^{  \tau(3)\delta(1,2)\tau\delta(5)\tau\delta\tau(4)  }= \raisebox{-11mm}{\includegraphics[height=2.4cm]{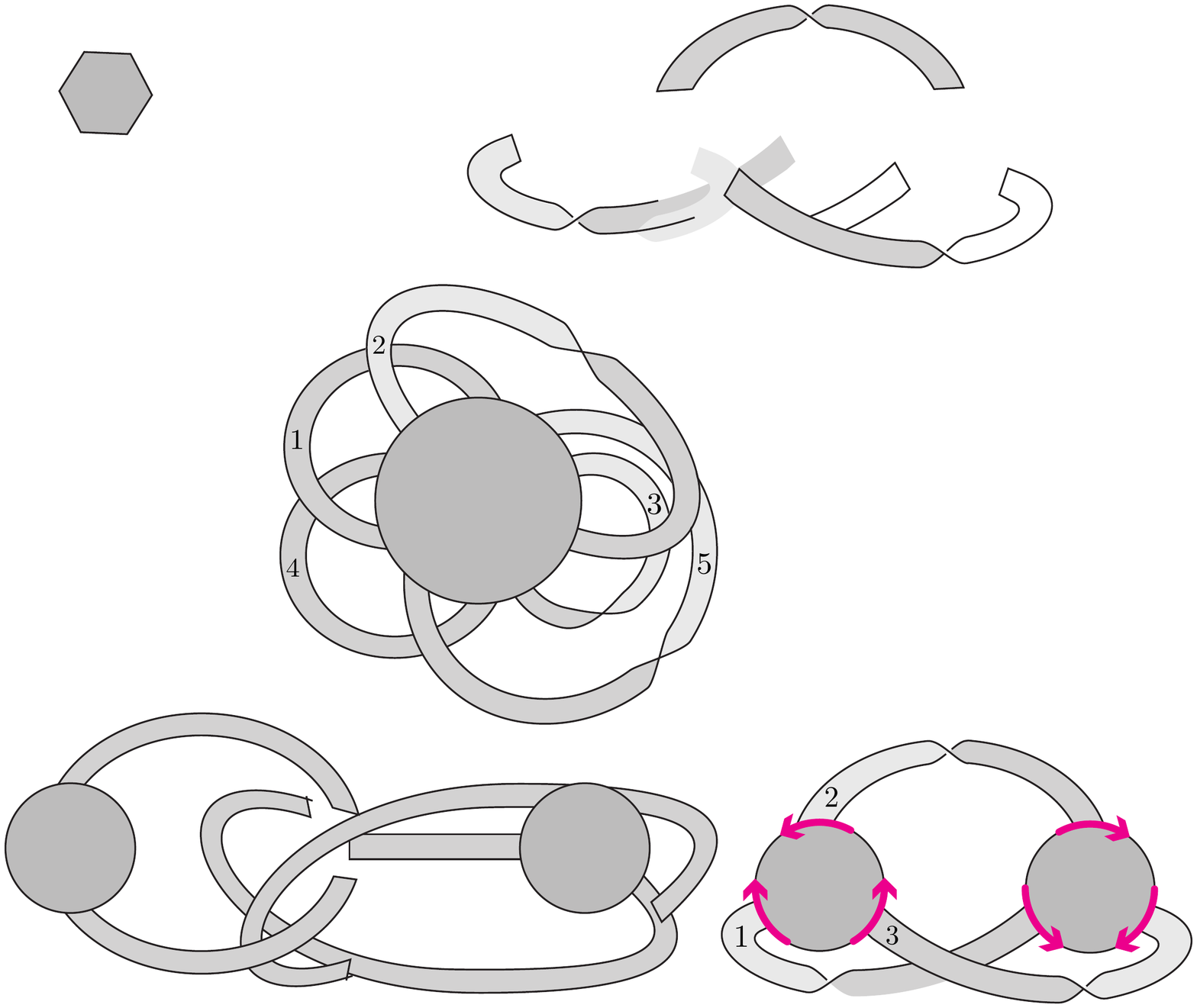}}\;.\]
\end{example}

Other important forms of duality appear as actions of subgroups of of the ribbon group on the set of embedded graphs. These dualities include geometric duality, Petriality, and Chmutov's partial duality from \cite{Ch1}. These connections were discussed in detail in \cite{E-MMe}, and we summarize them below.
\begin{propdef}\label{p.otherduals}
Let $G$ be an embedded graph and $A\subseteq E(G)$. Then
\begin{enumerate}
\item $G^{\delta(E(G))}=G^*$, the {\em geometric dual} of $G$;
\item $G^{\delta(A)}$ is the {\em partial dual} of $G$ with respect to $A$;
\item $G^{\tau(E(G))}$ is the {\em Petrial} or {\em Petrie dual} of $G$;
\item $G^{\tau(A)}$ is the {\em partial Petrial} of $G$ with respect to $A$.
\end{enumerate}
\end{propdef}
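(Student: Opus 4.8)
The content of items (2) and (4) is essentially terminological: the expressions $G^{\delta(A)}$ and $G^{\tau(A)}$ are being \emph{named} the partial dual and partial Petrial with respect to $A$, and their agreement with the notions used elsewhere (e.g.\ Chmutov's partial dual in \cite{Ch1}) is recorded in \cite{E-MMe}. Thus the substantive assertions are (1) and (3): that applying the surgery $\delta$ of Definition~\ref{def.ops} to \emph{every} edge produces the geometric dual $G^*$, and that applying the half-twist $\tau$ to every edge produces the Petrie dual. I would prove both by passing to the arrow-presentation (equivalently, ribbon-graph) model and tracing how the vertex circles get reorganized; note that, by the commutativity of operations on distinct edges observed above, $G^{\delta(E(G))}$ and $G^{\tau(E(G))}$ are well defined independently of the edge ordering $\ell$, so there is no ordering to keep track of.

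For (1), I would first record an arrow presentation of $G^*$ directly from the ribbon-graph picture: realizing $G$ as a neighbourhood of the embedded graph in $\Sigma$, the complementary regions are discs, one per face of $G$, and $G^*$ is the ribbon graph formed from these face discs together with the \emph{same} edge ribbons; hence an arrow presentation of $G^*$ has one circle per boundary component of the ribbon graph $G$, carrying, for each edge $e$, a marking arrow labelled $e$ at the place where the ribbon of $e$ re-attaches, directed by a fixed orientation of that ribbon. The plan is then to check that applying $\delta$ at every edge yields exactly this presentation. The operation $(\delta,i)$ cuts the vertex circles at the two arrows $A,B$ labelled $e_i$ and reglues the resulting arcs by the rule ``head of $A$ to tail of $B$, head of $B$ to tail of $A$''; the two new arcs so created are precisely the two free sides of the edge ribbon of $e_i$, i.e.\ the two segments along which a boundary component of the ribbon graph crosses $e_i$. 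Performing the surgery simultaneously at all edges therefore re-splices the vertex-boundary arcs of $G$ into exactly the curves that trace the boundary components of $G$; these become the circles, hence the vertices of $G^*$, and the relabelled $e_i$-arrows sit on them as required. Item (3) is handled by the same kind of boundary trace: inserting a half-twist in the ribbon of an edge interchanges the two free sides of that ribbon, so in the boundary walk of $G^{\tau(E(G))}$ every passage across an edge switches the ``side'' of $G$, and the boundary components become exactly the left--right (Petrie) closed walks of $G$, while the vertices and edges are untouched.

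The main obstacle in each case is the orientation bookkeeping: one must match the directions of the marking arrows with the combinatorics of which corner of an edge rectangle is glued where, and verify that the reconnection rule for $\delta$ (and the twist insertion for $\tau$) produces arrows directed consistently with a single orientation of each edge ribbon, so that the output is honestly $G^*$ (respectively the Petrial) and not merely some twisted dual of it. This requires care with the equivalence on arrow presentations that permits reversing all arrows sharing a label, and with the twisted case in particular. An alternative that sidesteps part of this is to first verify that $G^{\delta(e)}$ agrees with Chmutov's partial dual with respect to the single edge $e$, and then invoke the fact from \cite{Ch1} that partial duality with respect to all edges is geometric duality; but the direct boundary trace above is self-contained and covers the Petrial case by the same argument.
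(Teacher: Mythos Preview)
The paper gives no proof of this statement: it is presented as a \emph{Proposition/Definition}, and the sentence immediately preceding it reads ``These connections were discussed in detail in \cite{E-MMe}, and we summarize them below.'' So the paper's ``proof'' is simply a citation to the authors' earlier work where these identifications are established, together with the implicit understanding that items (2) and (4) are definitions rather than assertions.

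Your proposal goes considerably further than the paper does here: you actually sketch a self-contained verification of (1) and (3) by tracing boundaries in the arrow presentation, rather than deferring to \cite{E-MMe}. The approach is sound. For (1), the observation that the $\delta$-surgery at an edge replaces the two marking arcs by the two free sides of that edge ribbon, so that performing it at every edge re-splices the vertex-boundary arcs into the boundary components of the ribbon graph (i.e.\ the face circles, which are the vertex circles of $G^*$), is exactly the right picture. For (3), the left--right path description of Petrie polygons matches the effect of a global half-twist in the way you describe. Your caveat about orientation bookkeeping is well placed: that is indeed where the details live, and your suggested alternative of verifying the single-edge case against Chmutov's definition and then invoking \cite{Ch1} is precisely how one would shortcut it. In short, your proposal is correct and strictly more informative than what the paper provides at this point; the paper simply takes these identifications as established background from \cite{E-MMe}.
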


\subsection{The action of the ribbon group on the transition polynomial}\label{ss.tdtrans}
We will be especially interested in the behaviour of the Penrose polynomial under the operations of twisted duality.  In order to study how the Penrose polynomial of an embedded graph and its twisted duals are related, we  consider the action of the ribbon group on the more general topological transition polynomial. The effect of twisted duality on the Penrose polynomial can then be determined by applying Proposition~\ref{p.petr}. In this subsection we review the action of the ribbon group on the topological transition polynomial that was introduced in \cite{E-MMe}.

The group $\fG= \langle  \delta, \tau    \; |\;   \delta^2, \tau^2, (\tau\delta)^3   \rangle$ is isomorphic to $\fS_3$ via an isomorphism $\eta$ defined by
\[ 
\eta: \tau \mapsto (1\; 3) \quad \text{ and } \quad \eta: \delta \mapsto (1\; 2).  
\]
Furthermore, the symmetric group $\fS_3$  acts on the ordered triple, $(\al_v,\bet_v,\ga_v)$, of the weight system at a vertex by permutation. 
This action by $\fS_3$ on  weight systems at a vertex can be extended to an action of $\fS_3^n$ on medial weight systems of medial graphs with $n$ linearly ordered vertices. Since we will only need  the order independent analogue of this action, we will focus on it here.
This action allows us to use the ribbon group to modify the medial weight system of an embedded medial graph.  

To define this action, let $G_m$ be a canonically checkerboard coloured embedded medial graph of an embedded graph $G$ with medial weight system $W_m$ (or equivalently $(\bal, \bbe,\bga)$). Suppose further that $\Gamma = \prod^6_{i=1}{\g_i(A_i)}$ where the $A_i$'s partition $E(G)$, and the $\g_i$'s are the six elements of $\fG$ (as in Equation~\eqref{e.ga}).
Then we define ${W_m}^{\Gamma}$ (or $(\bal, \bbe,\bga)^{\Gamma}$), the \emph{weight system permuted by $\Gamma$}, to be the ordered triple of the weight system at a vertex $v_e$ given by $\eta ( \g_i) (\alpha_{v_e},\beta_{v_e},\gamma_{v_e})$ when $e \in A_i$.

\medskip

 In \cite{E-MMe}, it was shown that this action of the ribbon group on a medial weight system is compatible with the action of the ribbon group on embedded graphs. This compatibility is given in Theorem~\ref{t.qsd} as a twisted duality relation for the topological transition polynomial.
  This twisted duality relation says that the topological transition polynomial of the medial graph of $G$ is the same as that of the medial graph of any of the twisted duals, provided the weight system is appropriately permuted.  We will   apply this twisted duality relation in the subsequent sections to derive new properties for the Penrose polynomial.

\begin{theorem}\label{t.qsd}
Let $G$ be an embedded graph with embedded medial graph $G_m$, and let $\Gamma = \prod^6_{i=1}{\g_i(A_i)}$ where the $A_i$'s partition $E(G)$, and the $\g_i$'s are the six elements of $\fG$. Then,
\[ 
q\left(G_m; W_m, t\right) = q\left(G_m^{\Gamma}; W_m^{\Gamma}, t\right),  
\] 
\noindent
or equivalently,
\[
Q(G; (\boldsymbol\alpha, \boldsymbol\beta, \boldsymbol\gamma), t)=Q(G^{\Gamma}, (\boldsymbol\alpha, \boldsymbol\beta, \boldsymbol\gamma)^{\Gamma}, t).
\]
\end{theorem}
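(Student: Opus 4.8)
The plan is to peel the statement down to the case of a single edge acted on by a single generator of $\fG$, where it becomes a local assertion about the canonical checkerboard colouring of the medial graph, and then to reassemble the general case using that $\eta$ is a group isomorphism. Observe first that $q(F;W,t)$ depends only on $F$ as an abstract $4$-regular graph together with the weight attached to each of the three vertex states at each vertex: the defining sum ranges over graph states $s$, and a graph state (a pairing of the half-edges at each vertex) and the component count $c(s)$ are combinatorial data of $F$ alone. Thus $W_m$ enters only through the function sending each vertex state to its weight, and the canonical checkerboard colouring serves merely to name the three states so as to describe this function. Now write $\Gamma=\prod_{i=1}^{6}\g_i(A_i)$ as in \eqref{e.ga}. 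Since each $\g_i$ is a word in $\tau$ and $\delta$, and since $\g(A)$ means applying $\g$ to each edge of $A$ in turn, the graph $G^{\Gamma}$ is obtained from $G$ by a finite sequence of single-edge moves $(\tau,f)$, $(\delta,f)$; moves on distinct edges commute, so the order of processing is immaterial. Consequently it suffices to prove the two single-edge identities
\[
q\big(G_m;W_m,t\big)=q\big((G^{\tau(e)})_m;\,W_m^{\tau(e)},\,t\big)
\]
and the same with $\tau$ replaced by $\delta$, and then to compose them along the sequence of moves producing $G^{\Gamma}$. In that composition the transpositions $\eta(\tau)=(1\;3)$ and $\eta(\delta)=(1\;2)$ applied at a vertex $v_e$ accumulate, by the homomorphism property of $\eta$, to $\eta(\g_i)$ precisely when $e\in A_i$; that is exactly the permutation of the weight triple at $v_e$ prescribed by the definition of $W_m^{\Gamma}$. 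The equivalent statement for $Q$ is then just the definition $Q(H;\cdot,t)=q(H_m;\cdot,t)$.

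For the single-edge case, I claim there is an isomorphism of abstract $4$-regular graphs from $G_m$ to $(G^{\delta(e)})_m$ that is the identity on the common vertex set $E(G)$, that matches the vertex states at $v_f$ with the like-named states for every $f\neq e$, and that at $v_e$ interchanges the white split with the black split while fixing the crossing; and, likewise, an isomorphism $G_m\to(G^{\tau(e)})_m$ interchanging the white split with the crossing at $v_e$ and fixing the black split there. Granting this, such an isomorphism carries graph states to graph states preserving $c(s)$, and, since the only change in the naming of the vertex states is the stated transposition at $v_e$, it is weight-preserving once the weight triple at $v_e$ is permuted by $\eta(\delta)=(1\;2)$, respectively $\eta(\tau)=(1\;3)$. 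Term by term, this yields the two displayed identities. To establish the claim one passes to arrow presentations: by Definition~\ref{def.ops} the moves $(\delta,e)$ and $(\tau,e)$ alter the arrow presentation only in a neighbourhood of the two $e$-labelled arrows, so the medial graph and its canonical checkerboard colouring are untouched away from $v_e$, giving the ``identity off $v_e$'' part; near $v_e$ one then traces the four medial arcs through the local surgery. For $\delta$, taking $A=E(G)$ gives the geometric dual $G^{*}$ by Proposition~\ref{p.otherduals}, whose medial graph is $G_m$ with the black and white faces of the canonical checkerboard colouring interchanged at every vertex; the single-edge statement is the localisation of this, and the arrow-level surgery reconnects the four arcs at $v_e$ exactly so as to swap the white and black splits. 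For $\tau$, a half-twist on the ribbon edge $e$ reconnects the boundary of the surface near $e$; tracing the medial arcs shows the ``straight-through'' transition at $v_e$ (the crossing) is exchanged with one of the two smoothings, and reading off the checkerboard colouring pins this smoothing down to the white split, consistently with $G^{\tau(E(G))}$ being the Petrie dual.

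All the genuine content sits in the single-edge claim, namely in pinning down exactly which transposition of $\{\text{white split},\text{black split},\text{crossing}\}$ each of $\tau(e)$ and $\delta(e)$ realises at $v_e$, and in confirming that the rest of the medial graph and its colouring are unaffected. This is a finite local picture-chase, but it demands care about how the medial graph and its canonical checkerboard colouring are encoded in, and recovered from, the arrow presentation; everything after it is bookkeeping with $\eta$ and with the term-by-term structure of $q$. One could instead run the same single-edge analysis through the recursion of Proposition~\ref{p.recQ}, matching the three terms at $v_e$; this is equivalent and no shorter.
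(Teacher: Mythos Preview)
The paper does not prove Theorem~\ref{t.qsd} here at all: Section~\ref{s.td} opens by saying that, unless otherwise stated, all results in that section appear in \cite{E-MMe} with proofs there, and this theorem is one of them. So there is no in-paper proof to compare against.

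That said, your proposal is the natural argument and is essentially how the result is established in \cite{E-MMe}: reduce to the single-edge generators $\tau(e)$ and $\delta(e)$, observe that as abstract $4$-regular graphs $G_m$ and $(G^{\g(e)})_m$ are canonically identified with the vertex states at every $v_f$, $f\neq e$, carrying their names, while at $v_e$ the naming is permuted by exactly $\eta(\g)$; then compose along a word in the generators, using that $\eta$ is a homomorphism, to obtain the stated permutation $\eta(\g_i)$ on each $A_i$. Your identification of the two transpositions (white $\leftrightarrow$ black for $\delta$, white $\leftrightarrow$ crossing for $\tau$) is correct and matches the definition of $\eta$ in Subsection~\ref{ss.tdtrans}. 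Your locality argument via arrow presentations---that $(\tau,e)$ and $(\delta,e)$ alter only a neighbourhood of the $e$-arrows, so the corner structure at every other edge is untouched---is also the right mechanism for showing the states at $v_f$, $f\neq e$, keep their names.

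The only soft spot is that you leave the actual local verification at $v_e$ as a ``finite local picture-chase'' and appeal to the global dual and Petrial as consistency checks rather than carrying it through. That is where the content lives, as you say, and it does go through: labelling the four corners of $e$ and tracking what ``same end'' and ``same side'' mean before and after each move gives precisely the claimed transpositions. If you want the proof to stand on its own you should write out that corner-tracking for one of the two generators explicitly; everything else in your argument is sound.
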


We note that this twisted duality identity for the transition polynomial unifies a number of duality relations for several important graph polynomials. These relations include the  well-known identity $T(G;x,y)=T(G^*;y,x)$ for the Tutte polynomial; the identity $Q(G; (\boldsymbol\alpha, \boldsymbol\beta, \boldsymbol\gamma), t)=Q(G^*; (\boldsymbol\beta, \boldsymbol\alpha, \boldsymbol\gamma), t)$, from \cite{ES}, for the topological transition polynomial; the identities for  Bollob\'as and Riordan's topological Tutte polynomial under duality from \cite{ES, Mo1}, and under partial duality from \cite{Ch1, E-MMd, Mo3, VT}.

\subsection{Deletion, contraction, and the transition polynomial}\label{ss.delcon}
If $H$ is an abstract ({\em i.e.} non-embedded) graph and $e$ is any edge of $H$, then the contraction $H/e$ of $H$ along $e$ will always result in an abstract graph, and so contraction is a well defined operation on abstract graphs. However, for graphs embedded in surfaces, contraction is a more subtle operation. Suppose that a graph $G$ is   cellularly embedded in a surface $\Sigma$. If $e$ is not a loop,  then $G/e$ is defined as the embedded graph obtained taking the quotient $\Sigma/\{e\}$.  Contraction then works as expected for non-loops. Moreover, this definition of contraction is compatible with the contraction of a non-loop in a ribbon graph (if $G$ is a ribbon graph, then $G/e$ is the ribbon graph obtained by contracting the edge disc $e$).
   However, difficulties arise when the edge $e$ is a loop. In this case, the quotient space  $\Sigma/\{e\}$ need not be a surface.  So with this definition of contraction, the contraction of a loop in a cellularly embedded graph can result in a graph embedded in a pseudo-surface. Furthermore, if we view $G$ as a ribbon graph, and if $e$ is a loop incident to $v$, then forming the quotient  of the ribbon graph $G$ by the disc $e$ will take the vertex $v$ of $G$ to an annulus rather than a disc, and the quotient is not a ribbon graph.

In \cite{BR2}, Bollob\'as and Riordan defined a way to contract edges of a ribbon graph in a way that was compatible with the usual contraction of non-loop edges, and with the deletion-contraction relations of their topological Tutte polynomial. Chmutov, in \cite{Ch1}, showed that Bollob\'as and Riordan's definition of contraction can be cleanly expressed in terms of partial duality. We take Chmutov's expression to be our definition of the contraction of any edge of an embedded graph.
\begin{definition}\label{d.contract}
Let $G$ be an embedded graph and $e$ be {\em any} edge of $G$, then 
\[ G/e:=G^{\delta(e)} - e  .\]
\end{definition}  

\begin{example}
Let $G$ be the embedded graph consisting of a single vertex and a loop $e$. Then $G/e$ is the embedded graph consisting of two isolated vertices.

As a second example, if $G=\raisebox{-7mm}{\includegraphics[height=16mm]{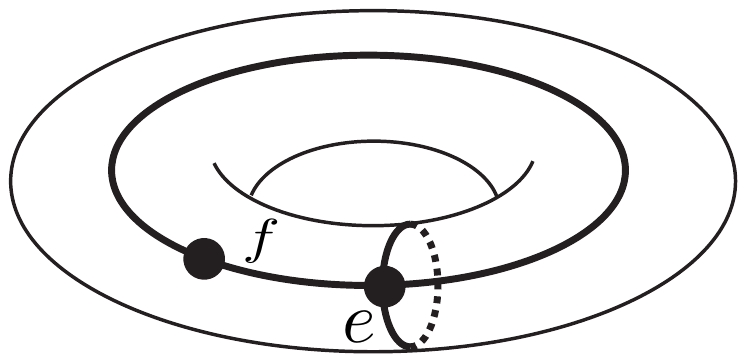}}$, then 
$G/f=\raisebox{-7mm}{\includegraphics[height=16mm]{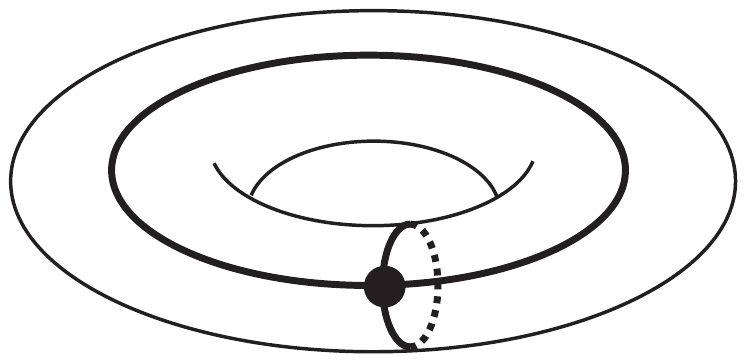}}$, and \mbox{$G/e=\raisebox{-0mm}{\includegraphics[width=20mm]{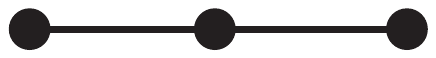}}\;\subset S^2$}.
\end{example}

This definition of contraction allowed the authors to find, in \cite{E-MMe},  a three term deletion-contraction relation for the topological transition polynomial. This deletion-contraction relation, which is stated in the following theorem, will be important in our study of the Penrose polynomial. 
\begin{theorem}\label{t.transdl} Let $G$ be an embedded graph and $e \in E(G)$. Then 
\[
Q(G; (\boldsymbol\alpha, \boldsymbol\beta, \boldsymbol\gamma), t)=\alpha_e Q(G/e; (\boldsymbol\alpha, \boldsymbol\beta, \boldsymbol\gamma), t)+\beta_e Q(G-e; (\boldsymbol\alpha, \boldsymbol\beta, \boldsymbol\gamma), t)+\gamma_e Q(G^{\tau(e)}/e; (\boldsymbol\alpha, \boldsymbol\beta, \boldsymbol\gamma), t),
\]
where the weight systems on the right-hand side are the weight systems on  $G/e$, $G-e$ or $G^{\tau(e)}/e$ induced by  $(\boldsymbol\alpha, \boldsymbol\beta, \boldsymbol\gamma)$. 

\end{theorem}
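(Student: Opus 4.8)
The plan is to prove Theorem~\ref{t.transdl} by using the recursive formulation of the topological transition polynomial from Proposition~\ref{p.recQ} at the chosen edge $e$, and then reconciling the three resulting terms with the graphs $G/e$, $G-e$, and $G^{\tau(e)}/e$. Writing $v_e$ for the vertex of $G_m$ corresponding to the edge $e$ of $G$, Proposition~\ref{p.recQ} gives
\[
q(G_m; W_m, t) = \alpha_e\, q((G_m)_{wh(v_e)}; W_m, t) + \beta_e\, q((G_m)_{bl(v_e)}; W_m, t) + \gamma_e\, q((G_m)_{cr(v_e)}; W_m, t).
\]
So the whole content of the theorem is the claim that each smash-state of the medial graph at $v_e$ is itself the medial graph of the appropriate modification of $G$: namely that $(G_m)_{bl(v_e)} = (G-e)_m$, that $(G_m)_{wh(v_e)} = (G/e)_m$, and that $(G_m)_{cr(v_e)} = (G^{\tau(e)}/e)_m$ --- where in each case the induced weight system matches up, since taking a split simply removes the vertex $v_e$ and leaves the triples on the remaining vertices untouched. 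Once these three identifications are in hand, substituting $Q(\cdot) = q((\cdot)_m; \cdot)$ via the definition of $Q$ finishes the proof immediately.

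First I would establish the black-split identity. Deleting the edge $e$ from $G$ should correspond, at the medial level, to performing the black split at $v_e$: the black split is the one that respects the canonical checkerboard colouring by joining the two black corners (the ones associated with the endpoints of $e$), which is exactly what happens to the local picture of $G_m$ when the edge ribbon $e$ is erased and the face structure is re-read. This is most cleanly checked in the ribbon-graph or arrow-presentation picture: removing the edge disc $e$ changes the boundary components locally precisely as the black split changes the cycle structure of $G_m$ at $v_e$, and the canonical colouring of $(G-e)_m$ agrees with the inherited one. Second, for the white-split identity, I would use Definition~\ref{d.contract}: $G/e := G^{\delta(e)} - e$. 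By Theorem~\ref{t.qsd} applied with $\Gamma = \delta(e)$, the medial graph of $G^{\delta(e)}$ is abstractly $G_m$ with the weight triple at $v_e$ permuted by $\eta(\delta) = (1\;2)$, i.e.\ with $\boldsymbol\alpha$ and $\boldsymbol\beta$ swapped at that vertex; so the white split of $G_m$ at $v_e$ is the black split of $(G^{\delta(e)})_m$ at the corresponding vertex, which by the first identity is $(G^{\delta(e)} - e)_m = (G/e)_m$. Third, for the crossing identity, $G^{\tau(e)}/e = (G^{\tau(e)})^{\delta(e)} - e = G^{\tau\delta(e)} - e$ (in the order-independent notation $G^{\delta(e)\tau(e)}$, matching $\g_5 = \delta\tau$, or rather $\g$ with $\eta(\g) = \eta(\delta)\eta(\tau)$), and applying Theorem~\ref{t.qsd} with the appropriate $\Gamma$ shows the crossing split corresponds under the same reasoning to the black split of $(G^{\tau(e)}/e)_m$.

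Alternatively, and perhaps more efficiently, I would package all three identifications at once by noting that, since $Q(H; (\boldsymbol\alpha,\boldsymbol\beta,\boldsymbol\gamma), t) = q(H_m; W_m, t)$ and Theorem~\ref{t.qsd} lets me permute the weight at $v_e$ by any element of $\fS_3$ at the cost of replacing $G$ by the corresponding twisted dual, it suffices to prove the single statement $(G_m)_{bl(v_e)} = (G-e)_m$ with matching weights; the white and crossing cases then follow by applying this to $G^{\delta(e)}$ and to $G^{\tau\delta(e)}$ respectively and invoking Definition~\ref{d.contract} together with the identities $G^{\delta(e)} - e = G/e$ and $G^{\tau(e)\,\delta(e)} - e = G^{\tau(e)}/e$. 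The main obstacle I expect is the first, purely topological, identification $(G-e)_m = (G_m)_{bl(v_e)}$ with the colourings and induced weight systems genuinely agreeing: one has to be careful that "delete the edge $e$ of $G$" and "black-split the vertex $v_e$ of $G_m$" produce the same embedded $4$-regular graph including when $e$ is a loop or a bridge, and that the canonical checkerboard colouring behaves correctly --- this is best done by a local arrow-presentation computation. Everything after that is bookkeeping with Theorem~\ref{t.qsd} and the definition of contraction.
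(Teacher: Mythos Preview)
Your approach is correct and is essentially the one the paper relies on: Theorem~\ref{t.transdl} is not proved here but imported from \cite{E-MMe}, and the three medial-graph identifications you isolate are exactly the ``observations from \cite{E-MMe}'' that the paper later quotes in the proof of Theorem~\ref{sumcross}. One small correction: the paper records only that $(G_m)_{cr(v_e)}$ and $(G^{\tau(e)}/e)_m$ are \emph{twists of each other}, not equal as embedded graphs; this is harmless for the argument because $q(F;W,t)$ depends only on the abstract $4$-regular graph together with its weight system, and twisting an edge of the medial graph preserves the cyclic orders at vertices and hence the white/black/crossing labels --- but your write-up should not assert literal equality of embedded graphs in that case.
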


\begin{corollary}\label{c.trans delete contract} Let $G$ be an embedded graph, $e \in E(G)$ and   $\g \in \fG$. Let $(\bal,\bbe,\bga)^{\g(e)}=(\bal',\bbe',\bga')$, so that , $(\alpha'_e, \beta'_e, \gamma'_e) = \eta(\g)(\alpha_e, \beta_e, \gamma_e)$ where the permutation $  \eta(\g)$ is defined in Subsection~\ref{ss.tdtrans}. Then
\[
Q(G; (\boldsymbol\alpha, \boldsymbol\beta, \boldsymbol\gamma), t)=\alpha_e' Q(G^{\g(e)}/e; (\boldsymbol\alpha, \boldsymbol\beta, \boldsymbol\gamma), t)+\beta_e' Q(G^{\g(e)}-e; (\boldsymbol\alpha, \boldsymbol\beta, \boldsymbol\gamma), t)+\gamma_e' Q(G^{\tau\g(e)}/e; (\boldsymbol\alpha, \boldsymbol\beta, \boldsymbol\gamma), t),
\]
where the weight systems on the right-hand side are the weight systems on   $G^{\g(e)}/e$, $G^{\g(e)}-e$ or $G^{\tau\g(e)}/e$ induced by  $(\boldsymbol\alpha, \boldsymbol\beta, \boldsymbol\gamma)$.
\end{corollary}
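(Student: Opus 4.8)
The plan is to prove Corollary~\ref{c.trans delete contract} by combining two results already in hand: the twisted-duality identity for the topological transition polynomial (Theorem~\ref{t.qsd}), which trades an operation on the graph for the corresponding permutation of the medial weight system, and the three-term deletion--contraction relation (Theorem~\ref{t.transdl}). Concretely, I would first \emph{off-load} the operation $\g(e)$ from the graph $G$ onto the weight system, and then apply Theorem~\ref{t.transdl} to the transformed graph $G^{\g(e)}$ at the edge $e$.

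If $\g=1$ the statement is exactly Theorem~\ref{t.transdl}, so assume $\g$ is one of the five non-identity elements $\g_2,\dots,\g_6$ of $\fG$. Set $\Gamma=\g(e)$; in the canonical form $\prod_{i=1}^6\g_i(A_i)$ this means $A_j=\{e\}$ (where $\g=\g_j$), $A_1=E(G)\setminus\{e\}$, and all remaining blocks empty. Theorem~\ref{t.qsd} then gives
\[
Q(G;(\bal,\bbe,\bga),t)=Q\bigl(G^{\g(e)};(\bal,\bbe,\bga)^{\g(e)},t\bigr)=Q\bigl(G^{\g(e)};(\bal',\bbe',\bga'),t\bigr),
\]
the last step being the hypothesis $(\bal,\bbe,\bga)^{\g(e)}=(\bal',\bbe',\bga')$; note that by the definition of the permuted weight system in Subsection~\ref{ss.tdtrans}, $(\bal',\bbe',\bga')$ differs from $(\bal,\bbe,\bga)$ only at the edge $e$, where its entry is $\eta(\g)(\alpha_e,\beta_e,\gamma_e)$. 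Next I would apply Theorem~\ref{t.transdl} to $G^{\g(e)}$ at the edge $e$, using the weight system $(\bal',\bbe',\bga')$:
\[
Q\bigl(G^{\g(e)};(\bal',\bbe',\bga'),t\bigr)=\alpha'_e\,Q\bigl(G^{\g(e)}/e;\cdot\bigr)+\beta'_e\,Q\bigl(G^{\g(e)}-e;\cdot\bigr)+\gamma'_e\,Q\bigl((G^{\g(e)})^{\tau(e)}/e;\cdot\bigr),
\]
where each $\cdot$ denotes the weight system induced by $(\bal',\bbe',\bga')$.

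Two bookkeeping points then close the argument. First, from the conventions $G^{\g(A)\h(B)}=(G^{\g(A)})^{\h(B)}$ and $G^{\tau\g(A)}=G^{\g(A)\tau(A)}$ one gets $(G^{\g(e)})^{\tau(e)}=G^{\g(e)\tau(e)}=G^{\tau\g(e)}$, which matches the third term of the statement. Second, in each of $G^{\g(e)}/e$, $G^{\g(e)}-e$, and $G^{\tau\g(e)}/e$ the edge $e$ is no longer present (recall $G/e=G^{\delta(e)}-e$), so the induced weight system on any of these three graphs depends only on the entries of the ambient weight system at edges other than $e$; since $(\bal',\bbe',\bga')$ and $(\bal,\bbe,\bga)$ agree at all those edges, one may freely replace $(\bal',\bbe',\bga')$ by $(\bal,\bbe,\bga)$ on the right-hand side. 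Substituting the second display into the first then yields precisely the claimed identity.

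I do not expect a real obstacle: the corollary is in essence a reindexing of Theorem~\ref{t.transdl} through Theorem~\ref{t.qsd}. The only thing that needs care is keeping the two $\fS_3$-actions straight --- the one on embedded graphs sending $G\mapsto G^{\g(e)}$ and the one on weight systems sending $(\bal,\bbe,\bga)\mapsto(\bal',\bbe',\bga')$ --- along with the $G^{\tau\g(e)}$ notation, so that Theorems~\ref{t.qsd} and~\ref{t.transdl} are each applied to consistent data; the single computational check, that the induced weight systems on the three reduced graphs are unchanged when $(\bal',\bbe',\bga')$ is swapped for $(\bal,\bbe,\bga)$, is immediate because $e$ has been deleted.
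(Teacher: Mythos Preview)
Your proposal is correct and is exactly the intended argument: the paper states this result as a corollary of Theorem~\ref{t.transdl} (with proof deferred to \cite{E-MMe}), and the derivation is precisely to apply Theorem~\ref{t.qsd} with $\Gamma=\g(e)$ to pass from $G$ to $G^{\g(e)}$, then apply Theorem~\ref{t.transdl} at $e$, and finally use that the permuted and original weight systems agree off $e$. Your handling of the notational point $(G^{\g(e)})^{\tau(e)}=G^{\g(e)\tau(e)}=G^{\tau\g(e)}$ via the convention $G^{\g\h(A)}:=G^{\h(A)\g(A)}$ is also correct.
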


\section{Twisted duality and identities for the Penrose polynomial}\label{s.tdpenrose}

One of the major advantages of considering the topological Penrose polynomial is that it satisfies various identities that are not realizable in terms of plane graphs.  Many of these identities arise by considering twisted duality. For example, we see in this context that, unlike the classical Penrose polynomial of a plane graph, the topological Penrose polynomial has deletion-contraction reductions similar to those for the Tutte polynomial.

\subsection{Some non-plane identities for the Penrose polynomial}

The following proposition provides a reformulation of the Penrose polynomial that will be useful later. This proposition expresses the Penrose polynomial $P(G;\lambda)$ as a sum over all of the partial Petrials,  $G^{\tau (A)}$, of $G$.
 \begin{proposition}\label{l.peng}
 Let $G$ be a ribbon graph, then 
 \[    P(G;\la)=\sum_{A \subseteq E(G)} (-1)^{|A|} \lambda^{f(G^{\tau (A)})},\]
recalling that $f(G)$ is the number of faces of $G$.
 \end{proposition}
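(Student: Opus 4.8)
The plan is to go through the topological transition polynomial, using Proposition~\ref{p.petr} which identifies $P(G;\la)$ with $Q(G;(\boldsymbol 1,\boldsymbol 0,\boldsymbol{-1}),\la)=q(G_m;W_P,\la)$. The key observation is that the Penrose weight system $W_P=(\boldsymbol 1,\boldsymbol 0,\boldsymbol{-1})$ assigns weight $0$ to every black split, so in the state sum defining $q(G_m;W_P,\la)$ only the Penrose states survive; at each vertex $v_e$ of $G_m$ we are left with a choice between the white split (weight $1$) and the crossing (weight $-1$). Recording, for a Penrose state $s$, the set $A\subseteq E(G)$ of edges at which $s$ takes the crossing configuration, we get $\omega_P(s)=(-1)^{|A|}$, so that
\[ P(G;\la)=\sum_{A\subseteq E(G)}(-1)^{|A|}\la^{c(s_A)}, \]
where $s_A$ is the Penrose state that crosses exactly at the edges of $A$ and white-splits elsewhere. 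So it only remains to identify $c(s_A)$, the number of closed curves produced, with $f(G^{\tau(A)})$.

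For that identification I would invoke the recursive description. By Proposition~\ref{p.recQ} (or directly from the skein relation \eqref{e.skP}), computing $q(G_m;W_P,\la)$ by resolving each vertex $v_e$ into a white split or crossing, and then counting the resulting closed curves, reproduces exactly the state sum above term by term. Now the geometric content: resolving the vertex $v_e$ of the medial graph $G_m$ by a \emph{white split} produces (a neighbourhood of) $G$ near the edge $e$, while resolving it by a \emph{crossing} produces (a neighbourhood of) $G^{\tau(e)}$ near $e$ — this is the standard fact that the medial graph vertex state "white split vs. crossing" corresponds to "edge $e$ vs. half-twisted edge $e$", and the black split corresponds to the dual $G^{\delta(e)}$; compare the medial vertex states in \eqref{e.splits}. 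Performing the white split at every edge outside $A$ and the crossing at every edge in $A$ therefore yields the medial graph of $G^{\tau(A)}$ with \emph{all} vertices resolved, i.e.\ the boundary curves of the ribbon graph $G^{\tau(A)}$ itself. Hence the number of closed curves in the state $s_A$ equals the number of boundary components of $G^{\tau(A)}$, which by the ribbon-graph convention is precisely $f(G^{\tau(A)})$. Substituting $c(s_A)=f(G^{\tau(A)})$ into the displayed sum gives the claimed formula.

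The step I expect to be the main obstacle — or at least the one requiring the most care — is making the correspondence "medial vertex state $\leftrightarrow$ local operation on the embedded graph" precise and checking that it is the \emph{white split / crossing} pair (rather than some other pairing) that matches the edge / partial-Petrial pair, with the correct matching of the black split to partial duality. This is a purely local, pictorial verification at a single edge, and once it is in hand the commuting of the operations on distinct edges (noted after the ribbon group action is defined) lets one apply it edge-by-edge; but it is where sign and orientation conventions could bite, so I would do it carefully using the arrow-presentation description of $\tau$ from Definition~\ref{def.ops}. An alternative, slightly slicker route that avoids re-deriving this local dictionary is to apply Theorem~\ref{t.qsd} (twisted duality for $Q$) with $\Gamma=\tau(A)$: it gives $Q(G;(\boldsymbol 1,\boldsymbol 0,\boldsymbol{-1}),\la)=\sum_{A}(\text{contribution})$ after expanding, but in practice one still needs the base case $Q(G;(\boldsymbol 1,\boldsymbol 0,\boldsymbol 0),\la)=\la^{f(G)}$ (all-white-split state gives the boundary curves of $G$) together with multilinearity of $Q$ in the weights at each edge, which again reduces to the same local picture. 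Either way the crux is the single-edge geometric identification; everything else is bookkeeping.
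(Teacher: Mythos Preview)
Your proposal is correct and follows essentially the same route as the paper: both set up the bijection between Penrose states and subsets $A\subseteq E(G)$ (edges where the crossing is chosen), read off $\omega_P(s)=(-1)^{|A|}$, and then argue locally at each edge that the white split/crossing dichotomy reproduces the boundary of $G^{\tau(A)}$, giving $c(s_A)=f(G^{\tau(A)})$. The only difference is presentational: where you describe the local identification in words (``white split gives a neighbourhood of $G$, crossing gives a neighbourhood of $G^{\tau(e)}$''), the paper carries it out concretely by labelling the four corners $a,b,c,d$ of the edge and checking that the arcs $(ac),(bd)$ versus $(ad),(bc)$ match in the state and in the ribbon-graph boundary --- exactly the careful single-edge verification you flagged as the crux.
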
 
\begin{proof}


Let $G_m$ be naturally embedded in the same surface as $G$. To each Penrose state $s$ of $G_m$ associate a set $A_s\subseteq E(G)$ by including an edge $e$ in $A_s$ if and only if the vertex state $s_e$ at the vertex $v_e$ of $G_m$ corresponding to $e$ is a crossing. This gives a bijection between the set of Penrose states and the set of subsets of $E(G)$. Clearly $cr(s) =|A_s|$. To prove the lemma we must show that $c(s)=f(G^{\tau (A_s)})$. To see why this is the case, let $a,b,c,d$ be the corners of $e$. If $s_e$ is a crossing that forms arcs $(ac)$ and $(bd)$ along $e$, then  in G$^{\tau (A_s)}$, the boundary of the half-twisted edge $e$ also forms arcs $(ac)$ and $(bd)$. Similarly, 
 if $s_e$ is a white split that forms arcs $(ad)$ and $(bc)$ along $e$, then in G$^{\tau (A_s)}$, the boundary of the edge $e$ also forms arcs $(ad)$ and $(bc)$. Since the points $a,b,c,d$ for each edge $e$ are connected along the vertex set of $G$ in an identical way for both $G^{\tau (A_s)}$ and $s$, then $c(s)=f(G^{\tau (A_s)})$. The proposition then follows.
\end{proof}

Many graph polynomials satisfy natural duality relations (for example, the Tutte polynomial satisfies $T(G;x,y)=T(G^*;y,x)$ for plane graphs).  In the following result, we give a duality, or rather a twisted duality, identity for the Penrose polynomial.
\begin{theorem}\label{drel}
Let $G$ be an embedded graph and $e$ be {\em any} edge of $G$. Then the Penrose polynomial satisfies the following twisted duality relation: 
 \[ P(G;\la) = P(G^{\delta (e)};\la) -P(G^{\delta \tau (e)};\la) = P(G^{\delta (e)};\la) + P(G^{\delta \tau \delta(e)};\la) .\]
\end{theorem}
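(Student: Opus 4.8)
The plan is to push everything onto the single medial graph $G_m$ and then read the identity off the transition-polynomial recursion. By Proposition~\ref{p.petr}, $P(H;\la)=q(H_m;W_P,\la)$ for every embedded graph $H$, where $W_P=(\boldsymbol1,\boldsymbol0,\boldsymbol{-1})$ is the Penrose weight system. Now $G^{\delta(e)}$, $G^{\delta\tau(e)}$ and $G^{\delta\tau\delta(e)}$ are each carried back to $G$ by a short sequence of the elementary moves $\delta,\tau$ applied to the single edge $e$ — respectively $\delta$; then $\delta$ followed by $\tau$; and then $\delta,\tau,\delta$ — where one uses that these operations on a fixed edge obey the relations of $\fG$ (in particular $\tau^2=\delta^2=1$), so the compositions return $G$ itself and not merely an equivalent graph. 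Applying Theorem~\ref{t.qsd} once for each such move, and noting that each move changes the weight system only by transposing the weight triple at the vertex $v_e$ of $G_m$ corresponding to $e$ (by $\eta(\delta)=(1\;2)$ or $\eta(\tau)=(1\;3)$), I would obtain
\[
P(G^{\delta(e)};\la)=q(G_m;W',\la),\qquad
P(G^{\delta\tau(e)};\la)=q(G_m;W'',\la),\qquad
P(G^{\delta\tau\delta(e)};\la)=q(G_m;W''',\la),
\]
where $W',W'',W'''$ agree with $W_P$ away from $v_e$, and at $v_e$ carry the triples obtained from $(1,0,-1)$ by the indicated transpositions, namely $(0,1,-1)$, $(-1,1,0)$ and $(1,-1,0)$.

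Next I would apply the linear recursion of Proposition~\ref{p.recQ} at $v_e$ to each of the four transition polynomials in play. Writing $A:=q((G_m)_{wh(v_e)};W_P,\la)$, $B:=q((G_m)_{bl(v_e)};W_P,\la)$, $C:=q((G_m)_{cr(v_e)};W_P,\la)$, all of which are independent of the weight triple at $v_e$, the recursion gives
\[
P(G;\la)=A-C,\qquad
P(G^{\delta(e)};\la)=B-C,\qquad
P(G^{\delta\tau(e)};\la)=B-A,\qquad
P(G^{\delta\tau\delta(e)};\la)=A-B.
\]
Both equalities in the statement then follow at once: $(B-C)-(B-A)=A-C$ gives $P(G;\la)=P(G^{\delta(e)};\la)-P(G^{\delta\tau(e)};\la)$, and $(B-C)+(A-B)=A-C$ gives $P(G;\la)=P(G^{\delta(e)};\la)+P(G^{\delta\tau\delta(e)};\la)$; equivalently $P(G^{\delta\tau\delta(e)};\la)=-P(G^{\delta\tau(e)};\la)$, so the two right-hand expressions coincide.

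The one step that needs care is the twisted-duality bookkeeping in the first paragraph: tracking which words in $\fG$, applied to the edge $e$, revert the three twisted duals to $G$, and confirming that Theorem~\ref{t.qsd} permutes the weight triple at $v_e$ by $\eta$ of the corresponding generator. Everything after that is the mechanical recursion of Proposition~\ref{p.recQ} combined with Proposition~\ref{p.petr}. As an alternative route to the first identity, one could instead apply the two-term Penrose recursion $P(H;\la)=P(H/e;\la)-P(H^{\tau(e)}/e;\la)$ — this is Theorem~\ref{t.transdl} with the Penrose weights, the $\beta_e$-term vanishing — to $H=G^{\delta(e)}$ and to $H=G^{\delta\tau(e)}$, simplify using Definition~\ref{d.contract} together with the observation that $G-e=G^{\tau(e)}-e$, and then compare the result with the $\tau\delta$-case of Corollary~\ref{c.trans delete contract}.
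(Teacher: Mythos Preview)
Your proof is correct and follows essentially the same approach as the paper: both arguments use Proposition~\ref{p.petr} and Theorem~\ref{t.qsd} to rewrite the Penrose polynomials of the twisted duals as transition polynomials on the single medial graph $G_m$ with the weight triple at $v_e$ permuted, and then apply the recursion of Proposition~\ref{p.recQ} at $v_e$ to reduce everything to a linear identity in the three split-polynomials $A,B,C$. The only organizational difference is that the paper handles just $G$, $G^{\delta(e)}$, and $G^{\delta\tau(e)}$ explicitly and then obtains the second equality by a separate step, whereas you compute all four weight triples at once and read off both equalities simultaneously; your treatment of the $G^{\delta\tau\delta(e)}$ term is in fact slightly more direct.
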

\begin{proof}
We begin with the equation 
\begin{equation}\label{eq.peng2}
q(G_m; W_P, \la) = q(G_m;W_P^{\delta(e)}, \la)-q(G_m;W_P^{\tau\delta(e)}, \la).
\end{equation}
To see why this equation holds, note that by applying Proposition~\ref{p.recQ} to the weight systems $W_P$, $W_p^{\delta(e)}$, and $W_P^{\tau\delta(e)}$,  respectively, we have
\[ q(G_m; W_P, \lambda) = q((G_m)_{{wh}(v_e)} ; W_P, \lambda)-q((G_m)_{{cr}(v_e)} ; W_P, \lambda),
\]
\[ q(G_m; W_P^{\delta(e)}, \lambda) = q((G_m)_{{bl}(v_e)} ; W_P, \lambda)-q((G_m)_{{cr}(v_e)} ; W_P, \lambda),
\]
and 
\[ q(G_m; W_P^{\tau\delta(e)}, \lambda) = - q((G_m)_{{wh}(v_e)} ; W_P, \lambda)+q((G_m)_{{bl}(v_e)} ; W_P, \lambda).
\]
Substituting the three identities above into the left and right hand sides of Equation~\eqref{eq.peng2} will give the required equality.

We will now express each of the terms in Equation~\eqref{eq.peng2} in terms of the Penrose polynomial.
By Proposition~\ref{p.petr}, we have
\begin{equation}\label{eq.peng12}
P(G;\la)=q(G_m; W_P, \la).
\end{equation}
For the second term we have
\begin{equation}\label{eq.peng13}
\begin{split}
 P(G^{\delta (e)}; \la) 
 &= q((G^{\delta(e)})_m; W_P, \la) \\ &=
 q(((G^{\delta (e)})^{\delta (e)})_m; W_P^{\delta(e)}, \la) \\
& =  q(G_m ; W_P^{\delta(e)},\la),
\end{split}
\end{equation}
where the first equality follows by Proposition~\ref{p.petr}, the second follows from Theorem~\ref{t.qsd}, and the third follows from the fact that $(\delta(e))(\delta (e)) = 1(e)$.

Finally, we can rewrite third term of Equation~\eqref{eq.peng2}:
\begin{equation}\label{eq.peng14}
\begin{split}
 P(G^{\delta\tau (e)}; \la) 
 &= q((G^{\delta\tau(e)})_m; W_P, \la) \\ &=
 q(((G^{\delta\tau (e)})^{\tau\delta (e)})_m; W_P^{\tau\delta(e)}, \la) \\
& =  q(G_m ; W_P^{\tau\delta(e)},\la).
\end{split}
\end{equation}
Here, the first equality follows by Proposition~\ref{p.petr}, the second follows from Theorem~\ref{t.qsd}, and the third from the fact that $(\delta\tau(e))(\tau\delta (e)) = 1(e)$.

The result stated in the proposition then follows by substituting the identities in Equations~\eqref{eq.peng12}, \eqref{eq.peng13} and ~\eqref{eq.peng14} into Equation~\eqref{eq.peng2}. 

The final form in the statement of the theorem is obtained by applying the first to $G^{\delta (e)}$.

\end{proof}

\begin{theorem}\label{p.penid}
 The Penrose polynomial of an embedded graph $G$ has the following properties.
\begin{enumerate}
\item \label{651} If $A\subseteq E(G)$, then $ P(G;\lambda) = (-1)^{|A|}P(G^{\tau(A)};\lambda)$, and in particular $|P(G;\lambda)|$ is an invariant of the orbits of the half-twist, (\emph{i.e.} of partial Petrials). Furthermore, $ P(G;\lambda) = (-1)^{|A|}Q(G; (\boldsymbol1, \boldsymbol0, \boldsymbol{-1})^{\tau(A)}, \lambda)$. 
\item If $e$ is a non-twisted  loop of $G$ that bounds a 2-cell, then $P(G;\la) = (\la-1)P(G-e;\la)$. 
\item \label{4term fig} The Penrose polynomial satisfies the {\em four-term relation}: 
\[     
P\left(\raisebox{-6mm}{\includegraphics[height=14mm]{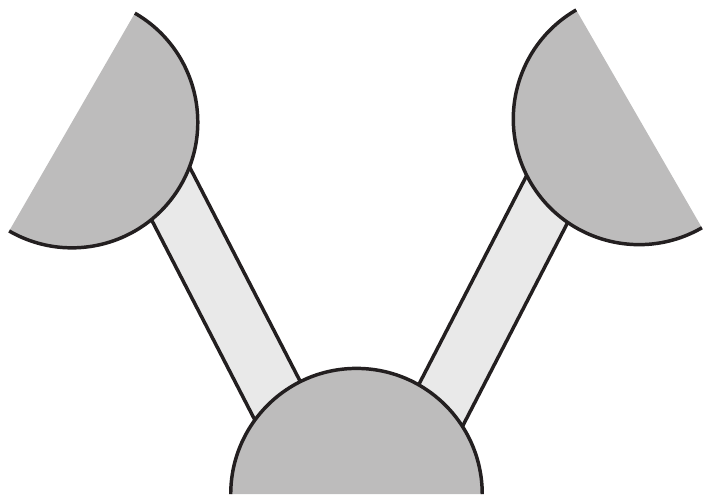}} \; ;  \la\right) -
P\left(\raisebox{-6mm}{\includegraphics[height=14mm]{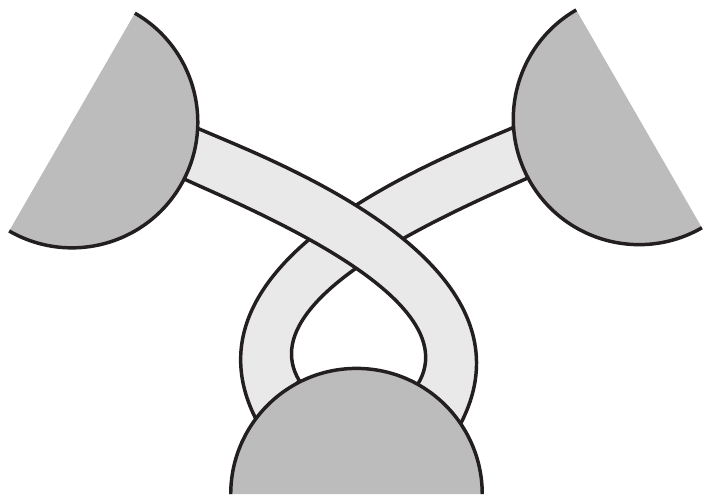}} \; ;  \la\right) =
P\left(\raisebox{-6mm}{\includegraphics[height=14mm]{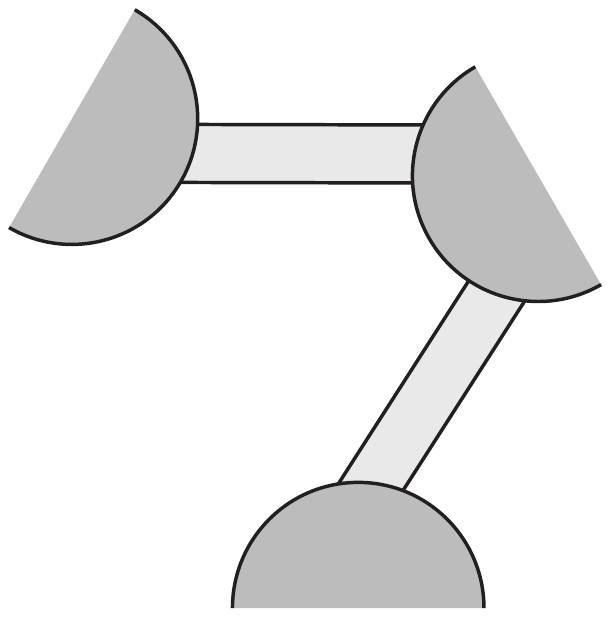}} \; ;  \la\right) -
P\left(\raisebox{-6mm}{\includegraphics[height=14mm]{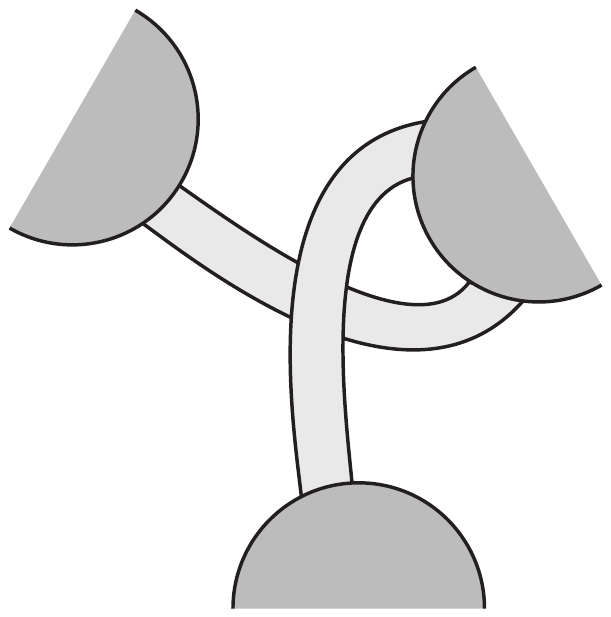}} \; ;  \la\right), 
\]
where the four ribbon graphs in the figure are identical except in the region shown.

\end{enumerate}
\end{theorem}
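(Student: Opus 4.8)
The plan is to prove the three statements in turn, leaning on Proposition~\ref{p.petr}, Theorem~\ref{t.qsd}, the deletion--contraction relation of Theorem~\ref{t.transdl}, and (as an alternative) Proposition~\ref{l.peng}.

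For part~\ref{651} I would use that the generalized transition polynomial $q(F;W,t)$ is multilinear in the vertex weights: since every graph state uses exactly one of the three states at a given vertex $v$, rescaling the triple $(\alpha_v,\beta_v,\gamma_v)$ by a constant rescales $q$ by that same constant. Because the permuted weight system $(\boldsymbol1,\boldsymbol0,\boldsymbol{-1})^{\tau(A)}$ is obtained from $(\boldsymbol1,\boldsymbol0,\boldsymbol{-1})$ by multiplying the triple at $v_e$ by $-1$ for each $e\in A$ (as $\eta(\tau)=(1\,3)$ swaps $\alpha$ and $\gamma$, turning $(1,0,-1)$ into $(-1)(1,0,-1)$), this gives at once $Q(G;(\boldsymbol1,\boldsymbol0,\boldsymbol{-1})^{\tau(A)},\lambda)=(-1)^{|A|}Q(G;(\boldsymbol1,\boldsymbol0,\boldsymbol{-1}),\lambda)$, which by Proposition~\ref{p.petr} is $(-1)^{|A|}P(G;\lambda)$; rearranging yields the last identity of the statement. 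Applying Proposition~\ref{p.petr} to $G^{\tau(A)}$ and then Theorem~\ref{t.qsd} with $\Gamma=\tau(A)$ (using $\tau^2=1$, so $(G^{\tau(A)})^{\tau(A)}=G$) rewrites $P(G^{\tau(A)};\lambda)$ as $Q(G;(\boldsymbol1,\boldsymbol0,\boldsymbol{-1})^{\tau(A)},\lambda)=(-1)^{|A|}P(G;\lambda)$, giving $P(G;\lambda)=(-1)^{|A|}P(G^{\tau(A)};\lambda)$; invariance of $|P(G;\lambda)|$ under partial Petrials is then immediate. (Alternatively this reads straight off Proposition~\ref{l.peng}: in $\sum_{B\subseteq E(G)}(-1)^{|B|}\lambda^{f(G^{\tau(B)})}$ substitute $B\mapsto A\triangle C$ and use $(-1)^{|A\triangle C|}=(-1)^{|A|}(-1)^{|C|}$ together with $(G^{\tau(A)})^{\tau(C)}=G^{\tau(A\triangle C)}$.)

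For the second part I would apply Theorem~\ref{t.transdl} at $e$ with the Penrose weights $(\alpha_e,\beta_e,\gamma_e)=(1,0,-1)$; the $\beta_e$ term drops out, and re-identifying each transition polynomial via Proposition~\ref{p.petr} gives $P(G;\lambda)=P(G/e;\lambda)-P(G^{\tau(e)}/e;\lambda)$. The hypothesis that $e$ is a non-twisted loop bounding a $2$-cell means, in ribbon-graph/arrow-presentation terms, that its two feet are adjacent on the boundary of the incident vertex with an empty arc between them (the monogon face). Two ribbon-graph facts then finish it: first, $G^{\delta(e)}$ turns that monogon face into a new vertex and makes $e$ a bridge, so $G/e=G^{\delta(e)}-e$ is $G-e$ together with an isolated vertex, whence $P(G/e;\lambda)=\lambda\,P(G-e;\lambda)$ using multiplicativity of the Penrose polynomial over disjoint unions (immediate from multiplicativity of $q$ over disjoint unions of medial graphs, with $P(\mathrm{point};\lambda)=\lambda$); second, in $G^{\tau(e)}$ the edge $e$ is a twisted loop with adjacent feet, and contracting such a loop deletes it, so $G^{\tau(e)}/e=G-e$. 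Substituting gives $P(G;\lambda)=(\lambda-1)P(G-e;\lambda)$. I would also sketch the alternative via Proposition~\ref{l.peng}: split $\sum_{A\subseteq E(G)}$ according to whether $e\in A$, note that $e$ remains a non-twisted monogon loop in $G^{\tau(A')}$ for every $A'\subseteq E(G-e)$ (partial Petrials of other edges neither move $e$'s feet nor twist $e$), and verify the face-count relations $f(G^{\tau(A')})=f((G-e)^{\tau(A')})+1$ and $f(G^{\tau(A'\cup e)})=f((G-e)^{\tau(A')})$; the two subsums then collapse to $(\lambda-1)\sum_{A'}(-1)^{|A'|}\lambda^{f((G-e)^{\tau(A')})}=(\lambda-1)P(G-e;\lambda)$.

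Part~\ref{4term fig} is a local statement: the four ribbon graphs agree outside a disc, so I would apply the Penrose skein relation~\eqref{e.skP} to the medial-graph vertices lying inside that disc in each of the four terms. Since the way the strands of the medial graph enter and leave the disc is the same for all four graphs, fully resolving the interior vertices turns each side of the claimed identity into the same signed sum --- indexed by the interior resolutions --- of evaluations of the common exterior; a finite check of how the interior strands reconnect under each resolution (equivalently, assembling instances of the deletion--contraction relation Theorem~\ref{drel} for the edges in the disc) matches the two sides term by term. I expect the main obstacle to be the topological bookkeeping in the second part: converting ``non-twisted loop bounding a $2$-cell'' into precise statements about arrow presentations and checking the behaviour of $G/e$, $G^{\tau(e)}/e$ and the face count under the relevant operations; part~\ref{651} is essentially formal given Theorem~\ref{t.qsd} and Proposition~\ref{p.petr}, and part~\ref{4term fig} is a finite local case analysis.
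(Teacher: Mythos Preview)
Your proposal is correct and, for parts~\ref{651} and~(2), matches the paper: part~\ref{651} is proved in the paper exactly via Proposition~\ref{p.petr} and Theorem~\ref{t.qsd} together with the observation that the Penrose weight system changes by a sign under $\tau$ at each vertex (your multilinearity phrasing makes this explicit), and the paper dismisses part~(2) as ``routine and therefore omitted'', so your deletion--contraction argument via Theorem~\ref{t.transdl} with the identifications $G/e=(G-e)\sqcup\{\ast\}$ and $G^{\tau(e)}/e=G-e$ is a correct way to supply the missing details.

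For part~\ref{4term fig} the routes diverge slightly in presentation. The paper invokes Proposition~\ref{l.peng} to rewrite each $P(G_i;\lambda)$ as a double sum over $A\subseteq E(G_i)\setminus\{e,f\}$ and $B\subseteq\{e,f\}$, and then for each fixed $A$ verifies by direct calculation that the sixteen terms $(-1)^{|B|}\lambda^{f(G_i^{\tau(A\cup B)})}$ satisfy the required identity. Your skein argument on the medial graph is really the same calculation in disguise (under the bijection in the proof of Proposition~\ref{l.peng}, fixing the exterior resolutions is fixing $A$, and resolving $v_e,v_f$ is summing over $B$), but the paper's route through Proposition~\ref{l.peng} has the advantage that the ``common exterior'' is manifestly well defined as a fixed subset $A$, so one does not need to argue separately that the medial-graph strands enter and leave the local disc identically across the four ribbon graphs.
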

\begin{proof} We will prove the properties one by one.
\begin{enumerate}
\item We have that
\begin{multline*} P(G^{\tau(A)};\la)=  Q(G^{\tau(A)}; (\boldsymbol1, \boldsymbol0, \boldsymbol{-1}), \la) =  Q((G^{\tau(A)})^{\tau(A)}; (\boldsymbol1, \boldsymbol0, \boldsymbol{-1})^{\tau(A)}, \la) =  Q(G; (\boldsymbol1, \boldsymbol0, \boldsymbol{-1})^{\tau(A)}, \la), \end{multline*} 
where the first equality follows from Proposition~\ref{p.petr}, the second from Theorem~\ref{t.qsd}, and the third follows from the fact that $\tau(A)\tau(A)=1(A)$.
It remains to show that 
\[  P(G^{\tau(A)};\la)=  (-1)^{|A|} Q(G; (\boldsymbol1, \boldsymbol0, \boldsymbol{-1}), \la). \]
This identity follows by recalling the skein definition of the Penrose weight system:
\[ \raisebox{0mm}{$W^{\tau(A)}_P(G_m): $} \quad \quad
\raisebox{-4mm}{ \includegraphics[height=10mm]{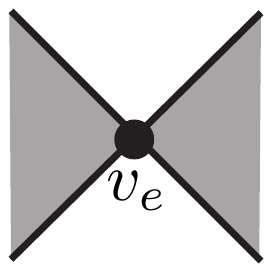}} \;\; = 
\left\{ \begin{array}{lrlr}
   \raisebox{-5mm}{ \includegraphics[height=10mm]{m2}}  &-&  \raisebox{-5mm}{ \includegraphics[height=10mm]{m4}} & \text{ if } e\notin A \\ && \\
\raisebox{-5mm}{ \includegraphics[height=10mm]{m4}}  &- & \raisebox{-5mm}{ \includegraphics[height=10mm]{m2}} &  \text{ if } e\in A  
\end{array}
\right. ,
 \]
and observing that the relations differ only by a factor of $-1$.  This proves both parts of Item (\ref{651}).

\item The proof of this property is routine and therefore omitted.

\item   
Let $G_i$, for $i=1,\ldots , 4$, be the four embedded graphs shown  in the four-term relation of Item~\ref{4term fig} in the order shown in the defining figure. There is a natural bijection between their edge sets. Identify the edge sets  of each of the  $G_i$'s using this correspondence,
letting $e$ and $f$ denote the distinguished edges of the $G_i$'s shown in the figure. Since by Proposition \ref{l.peng}
\[ P(G_i;\la) =   \sum_{\substack{A\subseteq E(G_i)-\{e,f\} \\ B\subseteq \{e,f\}}} (-1)^{|A\cup B|} \lambda^{f(G_i^{\tau(A\cup B)})}  ,
  \]
it is enough to show that for a fixed subset $A$ of   $E(G_i)-\{e,f\}$, we have 
\begin{multline*} \sum_{B\subseteq \{e,f\} }  (-1)^{|B|} \lambda^{f(G_1^{\tau(A\cup B)})}  
- \sum_{B\subseteq \{e,f\} }  (-1)^{|B|} \lambda^{f(G_2^{\tau(A\cup B)})} 
- \sum_{B\subseteq \{e,f\} }  (-1)^{|B|} \lambda^{f(G_3^{\tau(A\cup B)})} \\
+ \sum_{B\subseteq \{e,f\} }  (-1)^{|B|} \lambda^{f(G_4^{\tau(A\cup B)})} =0.
 \end{multline*}
This identity is easily verified by calculation. 
\end{enumerate}
\end{proof}

The fact that the state sum defining the Penrose polynomial satisfies the four-term relation is well known in the theory of Vassiliev invariants of knots (see \emph{e.g.} \cite{BN95}) that we are now able to recognize in the context of the topological Penrose polynomial. This result appears in knot theory  since the Penrose polynomial can be expressed in terms of the $\mathfrak{so}_N$ weight system.

\begin{corollary}\label{c.sp}
If $G$ is a self-Petrial embedded graph, then $ P(G;\lambda)=0 $ or   $|E(G)|$ is even. 
\end{corollary}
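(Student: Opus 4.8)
The plan is to apply Item~(\ref{651}) of Theorem~\ref{p.penid} with the full edge set $A = E(G)$. Recall from Proposition/Definition~\ref{p.otherduals} that $G^{\tau(E(G))}$ is precisely the Petrial of $G$, so the hypothesis that $G$ is self-Petrial says exactly that $G = G^{\tau(E(G))}$ as embedded graphs.

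First I would invoke Theorem~\ref{p.penid}(\ref{651}) in the case $A = E(G)$, which gives
\[ P(G;\lambda) = (-1)^{|E(G)|} P\!\left(G^{\tau(E(G))};\lambda\right). \]
Then I would substitute the self-Petrial identity $G^{\tau(E(G))} = G$ into the right-hand side, obtaining $P(G;\lambda) = (-1)^{|E(G)|} P(G;\lambda)$ inside $\mathbb{Z}[\lambda]$. If $|E(G)|$ is odd this reads $P(G;\lambda) = -P(G;\lambda)$, hence $2 P(G;\lambda) = 0$, and since $\mathbb{Z}[\lambda]$ is torsion-free this forces $P(G;\lambda) = 0$. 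Thus whenever $P(G;\lambda) \neq 0$ we must have $|E(G)|$ even, which is the claimed dichotomy.

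There is no substantive obstacle here: the only thing to be careful about is that the polynomial equality is taken in $\mathbb{Z}[\lambda]$ (coefficientwise), so that $2P(G;\lambda) = 0$ genuinely implies $P(G;\lambda) = 0$, and that ``self-Petrial'' is read as equality in the appropriate equivalence class of embedded graphs (generated by surface homeomorphism), which is exactly the sense in which Theorem~\ref{p.penid}(\ref{651}) is stated. One could optionally remark that the statement is sharp, since e.g.\ a self-Petrial graph on an odd number of edges necessarily has vanishing Penrose polynomial, recovering the phenomenon already flagged for bridgeless non-plane examples in Section~\ref{distinct}.
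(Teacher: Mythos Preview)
Your proof is correct and is essentially identical to the paper's own argument: apply Theorem~\ref{p.penid}\eqref{651} with $A=E(G)$, use the self-Petriality $G^{\tau(E(G))}=G$, and conclude. The extra remarks about torsion-freeness of $\mathbb{Z}[\lambda]$ and the sense of equality of embedded graphs are fine but not strictly needed.
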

\begin{proof}
$G$ is  self-Petrial  if and only if $G^{\tau(E(G))}=G$.
By Item~\eqref{651} of Proposition~\ref{p.penid}, we then have  $ P(G;\lambda) = (-1)^{|E(G)|}P(G^{\tau(E(G))};\lambda)$, and the result follows.
\end{proof}

\begin{example}\label{hemi}
In Section~\ref{distinct} it was noted that a plane graph has trivial Penrose polynomial if and only if it contains a bridge.  While $P(G;\lambda)=0$ for all embedded graphs that contain a bridge, the converse does not hold in general. That is, a trivial Penrose polynomial does not ensure that an embedded graph contains a bridge. For example, the hemidodecahedron is    bridgeless, but,  as it has an odd number of edges and   is self-Petrial,  it follows from Corollary~\ref{c.sp}  that its Penrose polynomial is zero. 
\end{example}

\subsection{A deletion-contraction relation for the Penrose polynomial}

We emphasize that in the following theorem the contraction of a loop $e$, or any other edge $e$ for that matter, is defined by $G/e:=G^{\delta (e)}-e$ as described in Subsection~\ref{ss.delcon}.

By moving into the broader class of embedded graphs, we now see that the Penrose polynomial actually does have  deletion-contraction reductions, like those for the Tutte polynomial, albeit with a slight twist.

\begin{theorem}\label{t.delcon} Suppose $G$ is an embedded graph, and $e \in E(G)$. Then: 
\begin{enumerate}
\item  \label{661} 
\[ P(G;\la) = P(G/e; \la) - P(G^{\tau(e)}/e; \la);\]
\item  \label{662}
\[ P(G;\la) = P(G/e ;\la) - P(G^{\tau\delta (e)}/e; \la); \]
\item  \label{663}
\[
P(G^{\delta\tau(e)};\la)= P(G-e; \la)-P(G/e; \la).
\]
\end{enumerate}
\end{theorem}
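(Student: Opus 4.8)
The plan is to derive all three deletion-contraction relations from the general three-term deletion-contraction relation for the topological transition polynomial (Theorem~\ref{t.transdl} and Corollary~\ref{c.trans delete contract}), together with the identification $P(G;\la) = Q(G;(\boldsymbol1,\boldsymbol0,\boldsymbol{-1}),\la)$ from Proposition~\ref{p.petr}. The key observation is that the Penrose weight triple $(\boldsymbol1,\boldsymbol0,\boldsymbol{-1})$ has a zero in the middle coordinate (the black-split weight $\beta_e=0$), so when we expand at an edge $e$ using Theorem~\ref{t.transdl}, the $\beta_e Q(G-e;\cdots)$ term vanishes, leaving only two terms. Explicitly, Theorem~\ref{t.transdl} with $(\boldsymbol\alpha,\boldsymbol\beta,\boldsymbol\gamma)=(\boldsymbol1,\boldsymbol0,\boldsymbol{-1})$ gives
\[
Q(G;(\boldsymbol1,\boldsymbol0,\boldsymbol{-1}),\la) = Q(G/e;(\boldsymbol1,\boldsymbol0,\boldsymbol{-1}),\la) - Q(G^{\tau(e)}/e;(\boldsymbol1,\boldsymbol0,\boldsymbol{-1}),\la),
\]
and applying Proposition~\ref{p.petr} to each term yields Item~\ref{661} directly.

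For Item~\ref{662}, I would instead use Corollary~\ref{c.trans delete contract} with $\g = \tau\delta$, or alternatively combine Item~\ref{661} with the twisted-duality relation of Theorem~\ref{drel}. The cleaner route is Corollary~\ref{c.trans delete contract}: here $\eta(\tau\delta)$ permutes $(\alpha_e,\beta_e,\gamma_e)=(1,0,-1)$ to $(\alpha_e',\beta_e',\gamma_e')$; one checks that under $\eta(\tau\delta)=(1\,3)(1\,2)$ applied appropriately the middle coordinate $\beta_e'$ is again $0$, so the corollary's three-term expansion again collapses to two terms, namely
\[
Q(G;(\boldsymbol1,\boldsymbol0,\boldsymbol{-1}),\la) = \alpha_e' Q(G^{\tau\delta(e)}/e;\cdots,\la) + \gamma_e' Q(G^{\tau\tau\delta(e)}/e;\cdots,\la),
\]
and since $\tau\tau\delta(e)=\delta(e)$ we get terms involving $G^{\tau\delta(e)}/e$ and $G^{\delta(e)}/e = G/e$; identifying the weights $\alpha_e',\gamma_e'$ gives the signs $+1$ and $-1$. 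One must be slightly careful about which element of $\fG$ produces $\beta_e'=0$: the permutations fixing the second coordinate of $(1,0,-1)$ are the identity and $(1\,3)$, i.e. $\g\in\{1,\tau\}$, so in fact the natural statement is to apply the corollary with the right $\g$ and read off which edge-operation appears; I would verify this case-by-case against the three target relations rather than guess.

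For Item~\ref{663}, the quickest approach is to apply Item~\ref{661} (or the duality relation Theorem~\ref{drel}) to the graph $G^{\delta\tau(e)}$ rather than to $G$, and then simplify the composite edge-operations using the relations $\tau^2=\delta^2=(\tau\delta)^3=1$ in $\fG$ together with the commutativity of operations on distinct edges; since only the single edge $e$ is involved, one is really just computing in the six-element group $\fG$. The main obstacle — and it is a bookkeeping obstacle rather than a conceptual one — is keeping track of these compositions of $\tau$ and $\delta$ at the edge $e$, and in particular checking that $(G^{\delta\tau(e)})^{\delta(e)} - e$, $(G^{\delta\tau(e)})^{\tau(e)}/e$, etc., really do simplify to $G-e$, $G/e$, and the stated twisted duals; this uses Definition~\ref{d.contract} ($G/e = G^{\delta(e)}-e$) repeatedly, the fact that deletion of $e$ commutes with any operation on the other edges, and that $G^{\g(e)}-e = G^{\h(e)}-e$ whenever $\g,\h$ differ only by operations that become trivial once $e$ is deleted. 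I would organize the proof so that Item~\ref{661} is the one substantive derivation and Items~\ref{662} and \ref{663} are short corollaries obtained by substituting twisted duals and applying the group relations in $\fG$.
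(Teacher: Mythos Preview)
Your plan is essentially the paper's proof. Item~\ref{661} is derived exactly as you describe: specialize Theorem~\ref{t.transdl} to the Penrose weights $(\boldsymbol1,\boldsymbol0,\boldsymbol{-1})$, the $\beta_e$-term vanishes, and Proposition~\ref{p.petr} converts the remaining two $Q$-terms into $P$-terms. For Item~\ref{663}, the paper does precisely what you propose: apply an already-proved identity to $G^{\delta\tau(e)}$ and simplify using the group relations in $\fG$ together with $H/e = H^{\delta(e)}-e$ and the fact that twisting $e$ before deleting it has no effect.

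The one place your plan wobbles is Item~\ref{662}. Your primary route, Corollary~\ref{c.trans delete contract} with $\g=\tau\delta$, does not collapse to two terms: you correctly observe that only $\g\in\{1,\tau\}$ keep the zero in the $\beta'_e$-slot, so $\g=\tau\delta$ is the wrong choice. The paper sidesteps this entirely by noting that Items~\ref{661} and~\ref{662} are literally the same identity, since
\[
G^{\tau\delta(e)}/e \;=\; G^{\delta\tau\delta(e)}-e \;=\; G^{\tau\delta\tau(e)}-e \;=\; G^{\delta\tau(e)}-e \;=\; G^{\tau(e)}/e,
\]
using $\delta\tau\delta=\tau\delta\tau$ in $\fG$ and that $H^{\tau(e)}-e=H-e$. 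This is exactly the ``group relations in $\fG$'' fallback you mention at the end, so your proposal is complete once you commit to that route for Item~\ref{662} rather than the corollary.
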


\begin{proof} Items (\ref{661}) and (\ref{662}) follow immediately from Theorem~\ref{t.transdl} and Corollary~\ref{c.trans delete contract} and the definition of contraction. (Item (\ref{662}) also follows from (\ref{661}) since  $G^{\tau \delta ( e)}/e =   G^{\delta \tau \delta ( e)}-e =  G^{ \tau \delta \tau ( e)}-e = G^{ \delta \tau ( e)}-e = G^{ \tau ( e)}/e.$) For Item (\ref{663}), 
observe that $ G/e= G^{\delta (e)}-e =G^{\tau\delta (e)} -e$, and so \eqref{662} gives
$P(G;\la)= P(G^{\tau\delta(e)}-e; \la)-P(G^{\tau\delta(e)}/e; \la)$. Item~\eqref{663} then follows  by applying this identity to $G^{\delta\tau(e)}$.

\end{proof}

\section{Connections with the circuit partition polynomial and some evaluations}\label{s.cpp}

An Eulerian digraph is an Eulerian graph with the edges directed so that the in-degree and out-degree are equal at each vertex.  The circuit partition polynomial of an Eulerian digraph, $j(\vec{G};x)$, defined in \cite{E-M98}, may be given by $j(\vec{G};x) =q(G;W_D, x)$, where $G$ is the underlying undirected graph of $\vec{G}$ and $W_D$ is the weight system that assigns a $1$ to a pair of edges at a vertex if one is directed into and the other out of the corresponding vertex in $\vec{G}$ .  Pictorially, this is:

\[\text{if }    \raisebox{-2mm}{$\begin{array}{c} \includegraphics[height=10mm]{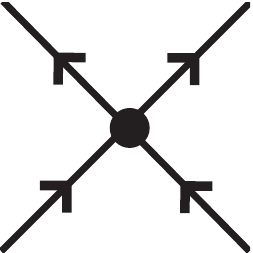}  \\ v\in \vec{G}\end{array}$},
\text{ then} \raisebox{-2mm}{$\begin{array}{c} \includegraphics[height=10mm]{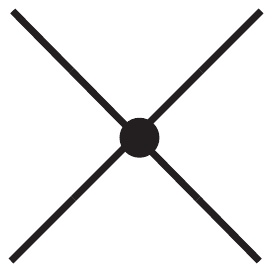}  \\ v\in G\end{array}$}\;\; \longrightarrow   \;\;1 \cdot   \raisebox{-4mm}{\includegraphics[height=10mm]{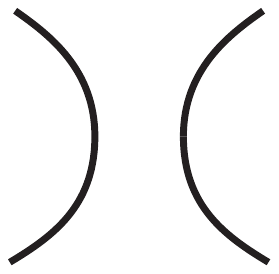}} + 0\cdot      \raisebox{-4mm}{\includegraphics[height=10mm]{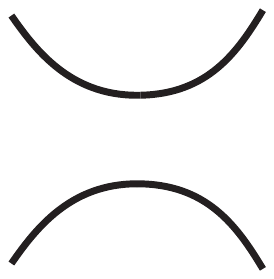}} +1\cdot     \raisebox{-4mm}{ \includegraphics[height=10mm]{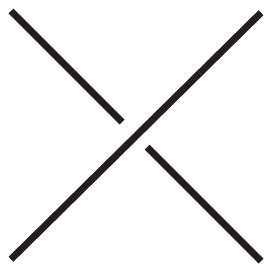}}. \]

A number of combinatorial interpretations are known for evaluations of $j(\vec{G};x)$.  We  will now relate the Penrose polynomial and the circuit partition polynomial in the special case of orientable checkerboard colourable (\emph{i.e.} properly face $2$-colourable) graphs.  This gives new interpretations for the Penrose polynomial for this class of graphs, which in turn motivates interpretations for broader classes of graphs.

The connection between the circuit partition polynomial and the Penrose polynomial for an oriented checkerboard coloured graph $G$ depends on a particular Eulerian digraph based on the medial graph, which we will refer to as the \emph{Penrose directed medial graph} and denote by $\pd$.  The digraph $\pd$ is the graph $G_m$ with the edges directed clockwise as they follow the white faces of $G$ and counter clockwise as they follow the black faces of $G$  (these are the black and white faces of the checkerboard colouring of $G$, not faces in the canonical checkerboard colouring of $G_m$).  Note that the Penrose states of $G_m$ correspond exactly to the non-zero weighted states of $G_m$ in the weight system $W_D$ induced by $\pd$.

We begin with the following lemma, which is an adaptation of Proposition~3 of \cite{Ai97}.

\begin{lemma}\label{l.cpp}
Let $G$ be an orientable checkerboard colourable graph. Then, if $s\in \mathcal{P}(G_m)$ is a Penrose state of $G_m$, we have $ cr(s)+c(s)\equiv f(G) \mod 2 $. 

\end{lemma}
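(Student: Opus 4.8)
The plan is to pass to the Penrose directed medial graph $\pd$ and argue by induction on $cr(s)$. Recall (from the discussion immediately preceding the lemma) that the Penrose states of $G_m$ are exactly the transition systems of the Eulerian digraph $\pd$ that pair an in-edge with an out-edge at every vertex, and that under this identification the white split and the crossing at a vertex $v_e$ are precisely the two such direction-respecting transitions, while the black split is the single forbidden one. For the base case $cr(s)=0$ the state $s$ uses only white splits, so $A_s=\emptyset$ in the bijection from the proof of Proposition~\ref{l.peng}, whence $c(s)=f(G^{\tau(\emptyset)})=f(G)$ and $cr(s)+c(s)=f(G)$, as required.

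For the inductive step, let $s$ be a Penrose state with $cr(s)=k+1$, pick an edge $e$ with $s_e$ a crossing, and let $s'$ be the Penrose state obtained by replacing that crossing with a white split, so that $cr(s')=k$ and $s,s'$ differ only at the vertex $v_e$. It then suffices to show $c(s)\equiv c(s')+1\pmod 2$, since then $cr(s)+c(s)\equiv (cr(s')+1)+(c(s')+1)\equiv cr(s')+c(s')\equiv f(G)\pmod 2$ by the inductive hypothesis. So everything reduces to the following claim, which is the crux of the argument: if $\vec H$ is an Eulerian digraph and two direction-respecting transition systems differ at exactly one vertex, then their numbers of circuits differ by exactly $1$. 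This is essentially the only point at which orientability is used, since it is what lets one direct $G_m$ consistently and so realize $\pd$ as an Eulerian digraph in the first place.

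To prove the claim, fix the vertex $v$ of disagreement, with in-ends $a,a'$ and out-ends $b,b'$; one transition pairs $\{a,b\},\{a',b'\}$ and the other pairs $\{a,b'\},\{a',b\}$, the pairing $\{a,a'\},\{b,b'\}$ being excluded by the in/out condition. Following the circuits of the first system through $v$, there are two cases. If the two passes through $v$ lie on distinct circuits, the re-routing merges them into one and leaves all other circuits unchanged, so the count drops by $1$. If they lie on a single circuit, write it as the closed directed walk $E_a\cdot E_b\cdot\pi_1\cdot E_{a'}\cdot E_{b'}\cdot\pi_2$, where $E_x$ is the edge with end $x$ at $v$ and $\pi_1,\pi_2$ are directed paths; then re-routing produces exactly the two circuits $E_a\cdot E_{b'}\cdot\pi_2$ and $E_{a'}\cdot E_b\cdot\pi_1$, so the count rises by $1$. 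In particular the count never stays the same, and this is exactly where the direction-respecting hypothesis is essential (without it one could land in an ``uninterlaced'' configuration leaving the count fixed); the parity statement follows. This sort of ``transition switch'' computation is standard in the theory of the circuit partition polynomial (see \cite{E-M98}), and it is the only real obstacle here; the rest is bookkeeping.
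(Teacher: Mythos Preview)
Your proof is correct, and it takes a genuinely different route from the paper's. The paper argues globally: it identifies $c(s)$ with $f(G^{\tau\delta(E(G))}-B_s)$ for a suitable $B_s$, invokes a result from \cite{E-MMd} to conclude that $G^{\tau\delta(E(G))}$ is orientable (this is where both hypotheses on $G$ enter), and then reads off the congruence directly from Euler's formula for $G^{\tau\delta(E(G))}-B_s$. Your argument is instead a local, inductive one: you reduce to the classical fact that switching between the two direction-respecting transitions at a vertex of an Eulerian digraph changes the number of circuits by exactly one, and you use the hypotheses only to ensure that $\pd$ exists and that the Penrose states are precisely its consistent transition systems. Your approach is more elementary and entirely self-contained (it does not need the external orientability result from \cite{E-MMd}), and it makes transparent exactly where the directedness is used; the paper's approach, on the other hand, ties the lemma into the twisted-duality framework that drives the rest of the paper.
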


\begin{proof}
We begin by reformulating the result in terms of twisted duals. Let $s\in \mathcal{P}(G_m)$,   and let  $B_s$ denote the set of  edges of $G$ at which there is a non-crossing vertex state at the corresponding vertex in the state $s$. We immediately have that $cr(s)=e(G)-|B_s|$.
In addition, since $G^{\tau\delta (E(G))}$ and $G^{\tau (E(G))}$ have the same boundary components, $c(s)=f(G^{\tau\delta (E(G))} -B_s)$.

Since $G$ is orientable and $G^*$ is bipartite, it follows from Proposition~4.30 of \cite{E-MMd} that $G^{\tau\delta (E(G))}$ is orientable. Thus Euler's formula gives 
\begin{equation}\label{e.pc} 
v(G^{\tau\delta (E(G))}-B_s) - e(G^{\tau\delta (E(G))}-B_s)+f(G^{\tau\delta (E(G))}-B_s) = 2k(G^{\tau\delta (E(G))}-B_s)+2g(G^{\tau\delta (E(G))}-B_s),
\end{equation} 
where genus  $g(G^{\tau\delta (E(G))}-B_s)$  is obtained by summing the genus of the connected components of $G^{\tau\delta (E(G))}-B_s$. 

Making the substitutions $  v(G^{\tau\delta (E(G))}-B_s) =f(G)$, $e(G^{\tau\delta (E(G))}-B_s)=e(G)-|B_s|=cr(s)$, and $f(G^{\tau\delta (E(G))} -B_s)=c(s)$ in Equation~\eqref{e.pc}, gives 
\[  f(G)-cr(s)+c(s) =  2k(G^{\tau\delta (E(G))}-B_s)+2g(G^{\tau\delta (E(G))}-B_s).\] Considering  this equation modulo $2$ gives the result.
\end{proof}

\begin{theorem}\label{t.cpp}
Let $G$ be an oriented  checkerboard coloured graph.  Then  
\[ P(G, \la)=(-1)^{f(G)}  j(\pd, -\la).   \]
\end{theorem}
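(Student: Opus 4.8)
The plan is to compute both sides as state sums over $St(G_m)$ and match them term by term. The left-hand side is, by Definition~\ref{d.Penrose}, $P(G;\la) = \sum_{s\in\mathcal{P}(G_m)}(-1)^{cr(s)}\la^{c(s)}$. For the right-hand side, I would first unwind $j(\pd;x) = q(G_m; W_D, x)$, where $W_D$ is the weight system induced by the edge-orientation of the Penrose directed medial graph $\pd$. The crucial observation, already flagged in the paragraph preceding Lemma~\ref{l.cpp}, is that a vertex state of $G_m$ gets weight $1$ under $W_D$ precisely when it is a white split or a crossing (one edge in, one edge out), and weight $0$ when it is a black split; hence the nonzero-weight states of $q(G_m; W_D, x)$ are exactly the Penrose states, each with state weight $1$. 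Therefore $j(\pd; x) = \sum_{s\in\mathcal{P}(G_m)} x^{c(s)}$, and so $j(\pd; -\la) = \sum_{s\in\mathcal{P}(G_m)} (-\la)^{c(s)} = \sum_{s\in\mathcal{P}(G_m)} (-1)^{c(s)}\la^{c(s)}$.

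Next I would multiply through by $(-1)^{f(G)}$ to get $(-1)^{f(G)} j(\pd;-\la) = \sum_{s\in\mathcal{P}(G_m)} (-1)^{f(G)+c(s)}\la^{c(s)}$, and compare this termwise with $P(G;\la) = \sum_{s\in\mathcal{P}(G_m)} (-1)^{cr(s)}\la^{c(s)}$. Since both sums range over the same index set $\mathcal{P}(G_m)$ and have the same powers of $\la$, it suffices to show $(-1)^{cr(s)} = (-1)^{f(G)+c(s)}$ for every Penrose state $s$, i.e. $cr(s) \equiv f(G) + c(s) \pmod 2$. But this is exactly Lemma~\ref{l.cpp}, which states $cr(s)+c(s)\equiv f(G)\pmod 2$. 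This completes the identification.

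The one place requiring genuine care — and the main obstacle — is the claim that the nonzero states of $W_D$ on $G_m$ coincide with the Penrose states and each carries weight exactly $1$. This needs the orientation $\pd$ to be well-defined (i.e. that the "clockwise along white faces, counterclockwise along black faces" prescription really does give an Eulerian digraph), which relies on $G$ being \emph{orientable} checkerboard colourable: one must check that at each vertex of $G_m$ the four half-edges alternate in/out around the vertex, so that each of the white-split and crossing states pairs an in-edge with an out-edge (weight $1$) while the black-split pairs two in-edges with two out-edges (weight $0$). The orientability hypothesis is what makes this consistent globally; without it the directed medial graph need not exist. Once this is verified the rest is bookkeeping, so I would state this in-degree/out-degree alternation explicitly as the content of the sentence "the Penrose states of $G_m$ correspond exactly to the non-zero weighted states of $G_m$ in the weight system $W_D$," and then the theorem follows by combining it with Lemma~\ref{l.cpp} as above.
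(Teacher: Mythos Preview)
Your proposal is correct and follows essentially the same route as the paper: both arguments identify the nonzero-weight states of $W_D$ with the Penrose states (each carrying weight~$1$), write out both polynomials as sums over $\mathcal{P}(G_m)$, and then invoke Lemma~\ref{l.cpp} to match the signs $(-1)^{cr(s)}$ with $(-1)^{f(G)+c(s)}$. The paper simply asserts the first identification (in the sentence you quote) and runs the chain of equalities in one line, whereas you spell out the in/out pairing at each vertex of $\pd$ to justify it; otherwise the arguments are the same.
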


\begin{proof}
With the given orientation of $\pd$, the states of $\pd$ are precisely the Penrose states of $G_m$. Thus 
\begin{multline*}P(G;\la)  =
         \sum_{s \in \mathcal{P}(G_m )}
           \left(  - 1 \right)^{cr\left( s\right)}
            \la^{c(s)}   = 
             \sum_{s \in \mathcal{S}(\vec{G}_m )}
            \left( { - 1} \right)^{cr\left( s\right)+c(s)}
           (-\la)^{c(s)}  
            \\
            = (-1)^{f(G)}              \sum_{s \in \mathcal{S}(\vec{G}_m )}
                       (-\la)^{c(s)}  
                      =(-1)^{f(G)} j(\vec{G}_m; -\la),
                 \end{multline*}
where the second last equality follows by Lemma~\ref{l.cpp}.
\end{proof}

We can now extend some results listed in Subsection~\ref{distinct} to more general classes of graphs.

\begin{proposition} If $G$ is orientable and checkerboard colourable, then $P(G;2) = 2^{v(G)}$.
\end{proposition}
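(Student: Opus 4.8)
The plan is to combine Theorem~\ref{t.cpp} with the well-known fact that the circuit partition polynomial $j(\vec{G}_m;x)$ has $x^{f(G)}$ as its leading term, evaluated at a suitable point. Concretely, Theorem~\ref{t.cpp} gives $P(G;2) = (-1)^{f(G)} j(\vec{G}_m;-2)$, so it suffices to show $j(\vec{G}_m;-2) = (-1)^{f(G)} 2^{v(G)}$. Recall that $j(\vec{G}_m;x) = \sum_{s} x^{c(s)}$, the sum being over the states of the Eulerian digraph $\vec{G}_m$; these are exactly the Penrose states of $G_m$, which by the bijection in the proof of Proposition~\ref{l.peng} (or of Lemma~\ref{l.cpp}) are indexed by subsets $A \subseteq E(G)$, with $c(s) = f(G^{\tau(A)})$.

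First I would reduce to a counting identity: by Proposition~\ref{l.peng}, $P(G;\la) = \sum_{A\subseteq E(G)} (-1)^{|A|}\la^{f(G^{\tau(A)})}$, so I want $\sum_{A} (-1)^{|A|} 2^{f(G^{\tau(A)})} = 2^{v(G)}$. The key step is to interpret $2^{f(H)}$ for an embedded graph $H$ as the number of states of the medial graph $H_m$ under the \emph{all-white-or-all-black} weighting — equivalently, the number of ways to $2$-colour the boundary components — and more usefully, to use $\la = 2$ as the value at which the transition polynomial $Q(G;(\boldsymbol1,\boldsymbol0,\boldsymbol{-1}),\la)$ telescopes. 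Indeed, applying the deletion-contraction relation of Theorem~\ref{t.delcon}\eqref{661}, $P(G;2) = P(G/e;2) - P(G^{\tau(e)}/e;2)$, and since $G^{\tau(e)}/e = G^{\tau\delta(e)}-e$ while $G/e = G^{\delta(e)}-e$, both reductions strip one edge; but the cleaner route is to observe that at $\la=2$ the Penrose skein relation $\raisebox{-2pt}{\scriptsize cross} = \raisebox{-2pt}{\scriptsize white} - \raisebox{-2pt}{\scriptsize black'}$ combined with the circuit-partition interpretation makes $j(\vec{G}_m;-2)$ count something concrete.

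The slickest argument I would actually write: when $G$ is orientable and checkerboard colourable, orient $\vec{G}_m$ as in the definition of $\pd$. At each vertex of $\vec{G}_m$ there are exactly two transitions consistent with the orientation (the white split and the crossing, say), so there are $2^{e(G)}$ states total, and $j(\vec{G}_m;x) = \sum_{s} x^{c(s)}$. Now I invoke the standard identity for Eulerian digraphs $j(\vec{H};-2) = (-1)^{?}\,(\text{something})$ — more precisely I would cite/reprove that for the directed medial graph of a plane-like orientable checkerboard coloured $G$, each vertex of $G$ contributes independently, giving $\sum_{s}(-2)^{c(s)} = \prod_{v\in V(G)}(-2+?)$... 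The honest main obstacle is getting this product/telescoping bookkeeping exactly right, i.e. showing $\sum_{A\subseteq E(G)}(-1)^{|A|}2^{f(G^{\tau(A)})} = 2^{v(G)}$ directly. I expect to handle it by induction on $e(G)$ using Theorem~\ref{t.delcon}\eqref{661}: the base case is $G$ with no edges (so $P(G;2)=2^{v(G)}$ trivially), and in the inductive step $P(G;2) = P(G/e;2) - P(G^{\tau(e)}/e;2)$, where if $e$ is a non-loop then $v(G/e) = v(G)-1$ and $G^{\tau(e)}/e$ also has $v(G)-1$ vertices but — crucially using orientability and checkerboard colourability being preserved (Proposition~4.30 of \cite{E-MMd} and the fact that contracting a non-loop in an orientable graph stays orientable) — one of the two terms vanishes or they differ by the right power of $2$; if $e$ is a loop, since $G$ is orientable $e$ is non-twisted, and if moreover it bounds a $2$-cell Theorem~\ref{p.penid}(2) gives $P(G;2) = (2-1)P(G-e;2) = P(G-e;2) = 2^{v(G)}$ by induction. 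The delicate point, and the one I would spend the most care on, is justifying that in the non-loop case exactly the right cancellation occurs — for this I would track the parity via Lemma~\ref{l.cpp}, which says $cr(s)+c(s)\equiv f(G)\bmod 2$ on Penrose states, forcing the signs in $j(\vec{G}_m;-2)$ to all agree, so that $(-1)^{f(G)}j(\vec{G}_m;-2) = \sum_{s}2^{c(s)} > 0$ and the inductive identity $\sum_s 2^{c(s)} = 2^{v(G)}$ closes.
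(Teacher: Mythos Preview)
Your proposal never actually lands on a complete argument, and the two routes you sketch both have genuine gaps.

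First, the induction via Theorem~\ref{t.delcon}\eqref{661} does not close. In the step $P(G;2)=P(G/e;2)-P(G^{\tau(e)}/e;2)$, the second graph $G^{\tau(e)}/e$ is typically \emph{non-orientable} (you have twisted an edge of an orientable graph), so the inductive hypothesis cannot be applied to it. Your fallback ``one of the two terms vanishes or they differ by the right power of $2$'' is exactly the content of the proposition and is never justified. Nor is it true that checkerboard colourability is automatically preserved under contraction in the way you need.

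Second, your parity argument at the end is simply wrong. Lemma~\ref{l.cpp} gives $c(s)\equiv f(G)+cr(s)\pmod 2$, so $(-1)^{c(s)}=(-1)^{f(G)}(-1)^{cr(s)}$ still depends on $cr(s)$; the signs in $j(\pd;-2)=\sum_s(-1)^{c(s)}2^{c(s)}$ do \emph{not} all agree. What the lemma actually yields is $(-1)^{f(G)}j(\pd;-2)=\sum_s(-1)^{cr(s)}2^{c(s)}=P(G;2)$, which is just Theorem~\ref{t.cpp} again and proves nothing new. And $\sum_s 2^{c(s)}=2^{v(G)}$ is not the right target either: that sum is $j(\pd;2)$, which equals $2^{f(G)}$ (the all-white state has $c=f(G)$ components and is the unique maximum), not $2^{v(G)}$.

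The paper's proof avoids all of this by citing the known evaluation (from \cite{E-M04}) $j(\vec H;-2)=(-1)^{v(\vec H)}(-2)^{h(\vec H)}$ for any $4$-regular Eulerian digraph $\vec H$, where $h(\vec H)$ is the number of components of the \emph{anticircuit} state (pair the two in-edges and the two out-edges at each vertex). For $\pd$ with its Penrose orientation, the anticircuits are exactly the black-face boundaries in the canonical checkerboard colouring of $G_m$, so $h(\pd)=v(G)$ and $v(\pd)=e(G)$; a short Euler-characteristic parity check using orientability finishes it. You gestured at ``a standard identity for Eulerian digraphs $j(\vec H;-2)$'' but never stated it or identified the anticircuits with the vertices of $G$, and that identification is the entire point.
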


\begin{proof}
By theorem \ref{t.cpp}, $P(G;2)=(-1)^{f(G)}j(\pd;-2)$.  However, in \cite{E-M04} it was shown that if $\vec{G}$ is a 4-regular Eulerian digraph, then $j(\vec{G};-2)=(-1)^{v(\vec{G})}(-2)^{h(\vec{G})}$, where $h(\vec{G})$ is the number of components in the anticircuit state of $\vec{G}$, that is, the state that results from pairing the two incoming edges and pairing the two outgoing edges at each vertex, so the arrows alternate along the edges of each circuit.

In a medial graph $G_m$ with a Penrose orientation, the anticircuits simply follow the black faces in the canonical checkerboard colouring of $G_m$, and thus $h(\pd)=v(G)$.  Furthermore $v(\pd)=e(G)$.

Thus, $P(G;2)=(-1)^{f(G)+e(G)}(-2)^{v(G)}=(-1)^{f(G)-e(G)+v(G)}2^{v(G)}=(-1)^{2k(G)-\gamma(G)}2^{v(G)}$.  Since $G$ is orientable, $\gamma(G)$ is even, and the result follows.
\end{proof}

\begin{proposition} If $G$ is orientable and checkerboard colourable, then $P(G;-1) = (-1)^{f(G)}2^{e(G)}$.
\end{proposition}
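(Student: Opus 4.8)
The plan is to mimic the proof of the preceding proposition (the one giving $P(G;2) = 2^{v(G)}$), using Theorem~\ref{t.cpp} to pass to the circuit partition polynomial and then invoking a known evaluation of $j$ at the relevant point. By Theorem~\ref{t.cpp}, $P(G;-1) = (-1)^{f(G)} j(\pd; 1)$, so it suffices to show $j(\pd; 1) = 2^{e(G)}$. The key fact to recall is the standard interpretation of the circuit partition polynomial at $x=1$: for an Eulerian digraph $\vec{H}$, $j(\vec{H};1) = \sum_s 1^{c(s)} = |\mathcal{S}(\vec{H})|$, the total number of directed circuit partitions (Eulerian states), which equals $\prod_{v} (d^+(v))!$ where the product is over vertices and $d^+(v)$ is the out-degree. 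Wait — more carefully, $j(\vec H;1)$ counts states $s$, and each state is a choice at each vertex of a pairing of in-edges with out-edges; at a $4$-valent vertex with in-degree $2$ and out-degree $2$ there are exactly $2$ such pairings. Hence $j(\pd;1) = 2^{v(\pd)} = 2^{e(G)}$, using $v(\pd) = v(G_m) = e(G)$.

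First I would state $P(G;-1) = (-1)^{f(G)} j(\pd;-(-1)) = (-1)^{f(G)} j(\pd;1)$ directly from Theorem~\ref{t.cpp}. Next I would recall that the states of $\pd$ are $4$-valent Eulerian digraph transition systems, and that at each of the $v(\pd)$ vertices there are precisely two admissible transitions (the two ways of pairing one incoming with one outgoing edge), while the crossing transition contributes weight $0$ in $W_D$ and the two non-crossing ones contribute weight $1$. Therefore $j(\pd;1) = \sum_{s} 1^{c(s)} = \#\{\text{states of } \pd\} = 2^{v(\pd)}$. Finally I would observe $v(\pd) = v(G_m) = e(G)$, giving $j(\pd;1) = 2^{e(G)}$ and hence $P(G;-1) = (-1)^{f(G)} 2^{e(G)}$.

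Alternatively, and perhaps more in the spirit of the surrounding text, one can avoid citing an external evaluation of $j$ and instead use Proposition~\ref{p.petr} together with Proposition~\ref{p.recQ}: at $\lambda = -1$ the Penrose weight system $(\boldsymbol 1,\boldsymbol 0,\boldsymbol{-1})$ and the fact that each white split at a vertex of $G_m$ either creates or destroys a component means one can track the parity, but this is messier. The cleanest route really is the one above. I expect the only genuine obstacle to be making sure the evaluation $j(\vec H;1) = 2^{v(\vec H)}$ for $4$-regular Eulerian digraphs is either cited correctly (it follows immediately from the defining state sum $j(\vec H;x)=\sum_s x^{c(s)}$ by setting $x=1$ and counting that each $4$-valent vertex admits exactly two transitions of nonzero weight) or re-derived in a line; everything else — the identification $v(\pd)=e(G)$ and the substitution into Theorem~\ref{t.cpp} — is routine.

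\begin{proof}
By Theorem~\ref{t.cpp}, $P(G;-1) = (-1)^{f(G)} j(\pd;1)$. By definition, $j(\pd;x) = \sum_{s} x^{c(s)}$, where the sum is over the states of $\pd$, equivalently over the graph states of $G_m$ of nonzero weight in the weight system $W_D$ induced by $\pd$. At each vertex of $G_m$ exactly two of the three vertex states (the two non-crossing splits, which pair an incoming edge of $\pd$ with an outgoing one) receive weight $1$, while the crossing state receives weight $0$. Hence the number of states of $\pd$ is $2^{v(G_m)}$, and therefore
\[
j(\pd;1) = \sum_{s} 1^{c(s)} = 2^{v(G_m)} = 2^{e(G)},
\]
since $G_m$ has one vertex for each edge of $G$. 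Substituting this into the first equation gives $P(G;-1) = (-1)^{f(G)} 2^{e(G)}$.
\end{proof}
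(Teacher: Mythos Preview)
Your proof is correct and follows essentially the same route as the paper: apply Theorem~\ref{t.cpp} to get $P(G;-1)=(-1)^{f(G)}j(\pd;1)$, then evaluate $j(\pd;1)$. The paper invokes the general identity $j(\vec{G};1)=\prod_v(\tfrac{\deg(v)}{2})!$ from \cite{E-M04} and specializes to the $4$-regular case, whereas you re-derive the $4$-regular case directly from the state sum; both yield $2^{v(\pd)}=2^{e(G)}$.

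One small slip that does not affect the count: your parenthetical identification of the two weight-$1$ vertex states is wrong for $\pd$. As the paper notes just before Lemma~\ref{l.cpp}, the nonzero-weighted states in $W_D$ induced by $\pd$ are precisely the Penrose states, namely the white split and the \emph{crossing}; it is the black split that pairs two incoming (resp.\ two outgoing) edges and hence receives weight~$0$. Your argument only uses that exactly two of the three vertex states have weight~$1$, which is true for any $4$-regular Eulerian digraph, so the conclusion stands; just correct the parenthetical.
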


\begin{proof}
By Theorem \ref{t.cpp}, $P(G;-1)=(-1)^{f(G)}j(\pd;1)$.  However, from \cite{E-M04},  if $\vec{G}$ is a 4-regular Eulerian digraph, then $j(\vec{G};1)=\prod_v{(\frac{deg(v)}{2})!}$. Since $\pd$ is 4-regular, the result follows.
\end{proof}

As noted in Subsection~\ref{distinct}, these two results do not extend to general embedded graphs.

We can now use the connection in Theorem~\ref{t.cpp}, and give combinatorial interpretations for the Penrose polynomial at all negative integers for orientable checkerboard colourable graphs, and to motivate interpretations for all positive integers for all embedded graphs.   These interpretations are in terms of special edge colourings of the medial graph extended from those described by Jaeger in \cite{Ja90} to give combinatorial interpretations for the Penrose polynomial of a plane graph at positive integers.   We will recover Jaeger's result as a corollary.

We begin with the special edge colouring, called a $k$-valuation, extended to general embedded graphs.
\begin{definition}
Let $G$ be an embedded graph and $G_m$ be its canonically checkerboard coloured medial ribbon graph. A {\em $k$-valuation} of $G_m$ is a $k$-edge colouring $\phi: E(G_m)\rightarrow \{1, 2, \ldots , k\}$ such that for each $i$ and every vertex $v_e$ of $G_m$, the number of $i$-coloured edges incident with $v_e$ is even.

A $k$-valuation is said to be {\em admissible}  if, at each vertex of $G_m$, the $k$-valuation is of one of the following two types:
\[\includegraphics[width=1.3cm]{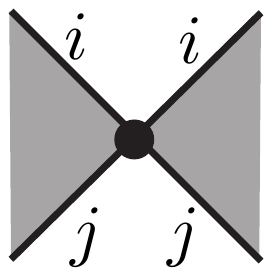}\hspace{1cm}\raisebox{5mm}{\text{or}} \hspace{1cm} \includegraphics[width=1.3cm]{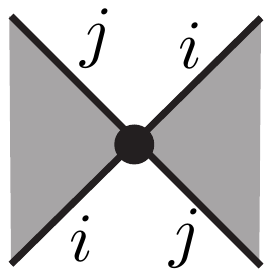},\]
where $i\neq j$.  The configuration on the left corresponds to a white split state, and the one on the right to a crossing state.

A $k$-valuation is said to be {\em permissible} if at each vertex of $G_m$, the $k$-valuation is of one of the two types shown above, or of the following type, called {\em total}:
\[\includegraphics[width=1.3cm]{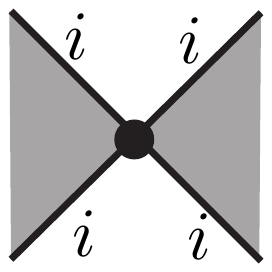}.\]
\end{definition}

We begin with a new combinatorial interpretation for the Penrose polynomial evaluated at negative integers.  A different, recursive, formula for plane graphs may be found in \cite{E-MS01, Sar01}.

\begin{proposition}\label{total}  If $G$ is orientable and checkerboard colourable, then $P(G;-n) = (-1)^{f(G)}\sum2^{m(s)}$, where the sum is over all permissible $n$-valuations $s$ of $G_m$ and where $m(s)$ is the number of total vertices in $s$.
\end{proposition}

\begin{proof}
By Theorem \ref{t.cpp}, $P(G;-n)=(-1)^{f(G)}j(\pd;n)$.  However, from \cite{E-M04},  if $\vec{G}$ is a 4-regular Eulerian digraph, then $j(\vec{G};n)=\sum2^{m(c)}$, where $c$ is an Eulerian $n$-colouring, that is, an edge colouring of $\vec{G}$ such that the restriction to any one colour is an Eulerian digraph (not necessarily connected), and where $m(c)$ is the number of total vertices.  Since this exactly corresponds to a permissible $n$-colouring, the result follows.
\end{proof}

Proposition \ref{total} does not hold for general embedded graphs.  For example, any graph with a bridge has a Penrose polynomial of zero.

The following theorem, which expresses the Penrose polynomial of a plane graph in terms of $k$-valuations, is due to Jaeger (see \cite{Ja90} Proposition~13, and also see  \cite{Ai97} Proposition~4).

\begin{theorem}[Jaeger \cite{Ja90}]\label{th.addval} 
If $G$ is a plane graph, then for each  $k\in \mathbb{N}$, $P(G;k)$ is equal to the number of admissible $k$-valuations of  $G_m$.
\end{theorem}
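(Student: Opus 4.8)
The plan is to show that $P(G;k)$, for $G$ plane, counts admissible $k$-valuations of $G_m$ by combining the state-sum definition of the Penrose polynomial with the skein relation \eqref{e.skP}, interpreting a $k$-colouring as a way of simultaneously resolving all the degree-$4$ vertices. First I would recall from Definition~\ref{d.Penrose} and the recursive skein relation that $P(G;k)=q(G_m;W_P,k)$, where resolving each vertex as a white split contributes $+1$, as a crossing contributes $-1$, and black splits contribute $0$; at the end each resulting closed curve is assigned the value $k$. The idea is to rewrite the factor $k^{c(s)}$ for each Penrose state $s$ as a sum over the $k^{c(s)}$ ways of colouring the $c(s)$ components of $s$ with colours from $\{1,\dots,k\}$, turning $P(G;k)$ into a signed count of pairs (Penrose state $s$, colouring of its components).

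Next I would set up the key bijection. Given a Penrose state $s$ of $G_m$ together with a colouring of its $c(s)$ components, push the colours onto the edges of $G_m$: each edge of $G_m$ lies on exactly one component of $s$ and inherits that colour. This produces an edge colouring $\phi$ of $G_m$ in which, at every vertex $v_e$, each colour appears an even number of times (since $s$ pairs up the four edge-ends at $v_e$ into two arcs, each arc monochromatic). Conversely, given such an edge colouring $\phi$, at each vertex the four incident edge-ends are coloured either $(i,i,i,i)$, or $(i,i,j,j)$ with $i\ne j$; in the latter case there are two ways to pair the like-coloured ends into a white split or a crossing, and in the former case (the \emph{total} type) there are, a priori, three consistent pairings (white, black, crossing). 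I would then observe that for a \emph{Penrose} state black splits are forbidden, so I must organize the signed sum so that total vertices cancel.

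The main obstacle — and the heart of Jaeger's argument — is precisely this cancellation at total vertices. Fix an edge colouring $\phi$ of $G_m$ in which every colour has even degree at every vertex, and let $T$ be its set of total vertices and $D$ its set of "dichromatic" vertices. Every Penrose state compatible with $\phi$ is obtained by choosing, at each vertex of $D$, a white split or crossing, and at each vertex of $T$, a white split or crossing (black being disallowed). The contribution of $\phi$ to $P(G;k)$ is thus $\sum_{s \text{ compatible with }\phi}(-1)^{cr(s)}$. At a dichromatic vertex the two choices white/crossing have signs $+1$ and $-1$; at a total vertex likewise. So the contribution factorizes as a product over vertices of $(+1)$-type local contributions only when... this is where the plane hypothesis and the combinatorics of how colour classes close up must be used: one shows that the number of compatible states with a given number of crossings is controlled so that the signed sum over all states compatible with $\phi$ equals $1$ if $\phi$ is admissible (no total vertices) and $0$ if $\phi$ has at least one total vertex. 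Concretely, if $\phi$ has a total vertex, pairing each compatible state $s$ with the state $s'$ obtained by toggling white$\leftrightarrow$crossing at that fixed total vertex is a sign-reversing involution, killing the contribution; if $\phi$ is admissible, every vertex is dichromatic with a \emph{forced} resolution determined by which pair of like-coloured ends we join, so there is a unique compatible Penrose state and one must check its sign $(-1)^{cr(s)}$ is $+1$ — and here I expect to invoke that for plane graphs $cr(s) + c(s) \equiv f(G) \bmod 2$ (the plane case of Lemma~\ref{l.cpp}) together with a parity bookkeeping, or more simply cite Jaeger's direct verification.

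Finally I would assemble: $P(G;k) = \sum_\phi \big(\text{signed count of Penrose states compatible with }\phi\big) = \#\{\text{admissible }k\text{-valuations of }G_m\}$, where the outer sum is over all edge $k$-colourings of $G_m$ with every colour of even degree at every vertex. Since this is a known result of Jaeger, I would keep the write-up brief, emphasizing the sign-reversing involution at total vertices as the one genuinely nontrivial point and referring to \cite{Ja90} for the remaining sign computation.
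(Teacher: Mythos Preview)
Your direct combinatorial approach---expanding $k^{c(s)}$ as a sum over colourings of the components of each Penrose state and swapping the order of summation---is a genuine alternative to the paper's route. The paper does not argue this way: it first proves the more general identity $P(G;n)=\sum_{\phi\text{ admissible}}(-1)^{cr(\phi)}$ for \emph{all} embedded graphs (Theorem~\ref{sumcross}) by induction on edges via the deletion--contraction relation of Theorem~\ref{t.delcon}, and only then specializes to the plane case. Your method reaches the same intermediate signed identity without any induction, which is a clean alternative.

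That said, your local analysis at a dichromatic vertex is garbled. There are not ``two ways to pair the like-coloured ends into a white split or a crossing'': given four ends coloured $i,i,j,j$ there is exactly \emph{one} pairing joining like colours, and depending on the cyclic arrangement it realises exactly one of the white split, the crossing, \emph{or the black split}. You have omitted the third possibility. Colourings whose forced pairing at some vertex is a black split are never reached from a Penrose state and hence contribute zero to your double sum; you should say this explicitly. After that observation and your (correct) sign-reversing involution at a total vertex, what survives is exactly $\sum_{\phi\text{ admissible}}(-1)^{cr(\phi)}$.

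The real gap is the final sign. Your suggestion to invoke Lemma~\ref{l.cpp} does not close it: that lemma gives $cr(s)+c(s)\equiv f(G)\bmod 2$, so you would still need $c(s_\phi)\equiv f(G)\bmod 2$ for the particular state $s_\phi$ attached to $\phi$, which is not obvious and in fact is essentially equivalent to what you want. The paper's argument here is different and is the clean one: for $G$ plane, each colour class of an admissible valuation is a disjoint union of simple closed curves in the plane, and by the Jordan curve theorem any two colour classes meet in an even number of points; since every such meeting is a crossing vertex, $cr(\phi)$ is even. Use that in place of the appeal to Lemma~\ref{l.cpp}.
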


This evaluation of the Penrose polynomial does not extend to general embedded graphs. For example, the embedded graph in the torus given by
\raisebox{-3mm}{\includegraphics[height=8mm]{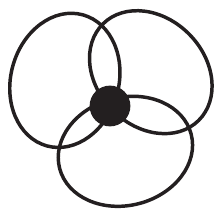}}
has Penrose polynomial $- \lambda^3+4 \lambda^2-3 \lambda $, but its medial graph has $k^3-2k^2+k$ admissible $k$-valuations.  However, we are able to give a more general formulation, motivated by an identity for the circuit partition polynomial at negative integers in \cite{E-M04}, one that does hold for all embedded graphs, and that reduces to Theorem \ref{th.addval} for plane graphs.

\begin{theorem}\label{sumcross}  If $G$ is an embedded graph, then $P(G;n) = \sum{(-1)^{cr(s)}}$, where the sum is over all admissible $n$-valuations $s$ of $G_m$ and where $cr(s)$ is the number of crossing states in $s$.
\end{theorem}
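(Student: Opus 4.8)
The plan is to start from the state-sum definition of the Penrose polynomial (Definition~\ref{d.Penrose}),
\[ P(G;n) = \sum_{s\in\mathcal{P}(G_m)} (-1)^{cr(s)}\, n^{c(s)}, \]
and to expand the factor $n^{c(s)}$ combinatorially: it is the number of ways to assign a colour in $\{1,\ldots,n\}$ to each of the $c(s)$ components of the Penrose state $s$. Thus
\[ P(G;n) = \sum_{(s,\phi)} (-1)^{cr(s)}, \]
where the sum ranges over all pairs $(s,\phi)$ with $s\in\mathcal{P}(G_m)$ and $\phi$ an $n$-colouring of the components of $s$. Every such pair induces an edge $n$-colouring $\Phi$ of $G_m$: colour each edge of $G_m$ with the colour $\phi$ assigns to the unique component of $s$ containing it.

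Next I would identify exactly which edge colourings $\Phi$ arise this way, arguing locally at each vertex $v_e$ of $G_m$. Each component of $s$ is a closed curve along which $\Phi$ is constant, and a component occupies $0$, $2$, or $4$ of the four edge-ends at $v_e$ (since each strand of the vertex state it uses accounts for two edge-ends); hence every colour class of $\Phi$ is even at $v_e$, so $\Phi$ is an $n$-valuation. Moreover the vertex state $s_{v_e}$ is a white split or a crossing, which pairs the four edge-ends into two monochromatic pairs. Counting the possibilities at $v_e$ (each colour occurring $0$, $2$, or $4$ times) shows the four edges carry either a single colour (a total configuration) or two colours each twice, and in the latter case the unique monochromatic pairing is forced to be a white split, a black split, or a crossing — and the black-split arrangement cannot occur because $s$ has no black splits. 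Therefore $\Phi$ is always a \emph{permissible} $n$-valuation. Conversely, given a permissible $n$-valuation $\Phi$, at each non-total vertex the monochromatic pairing determines $s_{v_e}$ uniquely (white split or crossing), at each total vertex either choice is allowed, and once $s$ is fixed, $\phi$ is forced (each component is monochromatic). So the pairs $(s,\phi)$ inducing a given permissible $n$-valuation $\Phi$ are in bijection with the choices of ``white split or crossing'' at each of the total vertices of $\Phi$.

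Finally I would organise the sum according to the induced valuation:
\[ P(G;n) = \sum_{\Phi\ \mathrm{permissible}}\ \sum_{(s,\phi)\,\mapsto\,\Phi} (-1)^{cr(s)}. \]
For a fixed $\Phi$, the crossings at the non-total vertices are forced, contributing a fixed sign $(-1)^{c_0(\Phi)}$, while at each total vertex choosing the crossing both flips the sign and multiplies by $(-1)$, so the inner sum equals $(-1)^{c_0(\Phi)}\prod_{v\ \mathrm{total\ in}\ \Phi}\bigl(1+(-1)\bigr)$. This vanishes whenever $\Phi$ has a total vertex, and otherwise (the empty product being $1$) equals $(-1)^{cr(\Phi)}$, where $cr(\Phi)$ is the number of crossing vertices of the unique state determined by the admissible valuation $\Phi$. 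Hence $P(G;n) = \sum_{\Phi\ \mathrm{admissible}} (-1)^{cr(\Phi)}$, as claimed.

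The main obstacle is the local bookkeeping at a vertex: one must verify carefully that the edge colouring induced by $(s,\phi)$ is genuinely an $n$-valuation, that its non-admissible vertices are precisely the total ones, that in the ``two colours, each twice'' case the monochromatic pairing is forced and is exactly one of white split / black split / crossing with the black-split case excluded by the Penrose condition, and that this really yields a bijection between the pairs $(s,\phi)$ over a fixed $\Phi$ and the sign choices at the total vertices of $\Phi$. Once this is in place the cancellation $1+(-1)=0$ at each total vertex — the inclusion–exclusion that the signs $(-1)^{cr(s)}$ encode — finishes the argument. For plane $G$ one then recovers Theorem~\ref{th.addval} as a corollary.
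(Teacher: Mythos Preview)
Your argument is correct, and it takes a genuinely different route from the paper's. The paper proves Theorem~\ref{sumcross} by induction on the number of edges, using the deletion--contraction identity $P(G;\lambda)=P(G/e;\lambda)-P(G^{\tau(e)}/e;\lambda)$ from Theorem~\ref{t.delcon} together with the identifications $(G/e)_m=(G_m)_{wh(v_e)}$ and $(G^{\tau(e)}/e)_m\sim(G_m)_{cr(v_e)}$; it then splits the induced sums according to whether the two arcs at $v_e$ receive the same colour or different colours, and observes that the ``same colour'' contributions cancel. Your proof instead works directly from the state-sum definition: you expand $n^{c(s)}$ as a count of component colourings, push the sum over pairs $(s,\phi)$ forward to the induced edge colouring $\Phi$, and then perform a local sign-cancellation at each total vertex. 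The two arguments are really the same cancellation viewed globally versus one edge at a time. What you gain is self-containment: your proof does not use Theorem~\ref{t.delcon}, the transition polynomial, or the ribbon group action, so it shows that Theorem~\ref{sumcross} follows immediately from Definition~\ref{d.Penrose}. What the paper's approach buys is an illustration that the deletion--contraction relation for the topological Penrose polynomial is a working computational tool, which is one of the themes of Section~\ref{s.tdpenrose}.
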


\begin{proof}
We induct on the edges of $G$.  The result clearly holds if $G$ has no edges, so let $e$ be an edge of $G$.  We will use the following observations from \cite{E-MMe}.
Let $G$ be an embedded graph with embedded medial graph $G_m$, and $e$ be an edge of $G$. Then
\begin{enumerate}
\item  $(G_m)_{bl(v_e)} = (G-e)_m$;
\item $(G_m)_{wh(v_e)}=(G/e)_m$;
\item $(G_m)_{cr(v_e)}$  and  $(G^{\tau(e)}/e)_m$ are twists of each other.
\end{enumerate}

We note that the admissible colourings of a graph are unaffected by twists, since the cyclic order of edges about each vertex is preserved.  Thus, the admissible colourings of $(G_m)_{cr(v_e)}$  and  $(G^{\tau(e)}/e)_m$ are the same.

By Theorem~\ref{t.delcon}, 
\[ P(G;\la) = P(G/e; \la) - P(G^{\tau(e)}/e; \la).\]
By induction, the right-hand side is
\[\sum_{(G/e)_m}{(-1)^{cr(s)}}-\sum_{(G^{\tau(e)}/e)_m}{(-1)^{cr(s)}},\] 
where the first sum is over all admissible $n$-valuations $s$ of $(G/e)_m$ and the second sum is over all admissible $n$-valuations $s$ of $(G^{\tau(e)}/e)_m$.

As noted above, this becomes
\[\sum_{(G_m)_{wh(v_e)}}{(-1)^{cr(s)}}-\sum_{(G_m)_{cr(v_e)}}{(-1)^{cr(s)}},\] 
where the first sum is over all admissible $n$-valuations $s$ of $(G_m)_{wh(v_e)}$ and the second sum is over all admissible $n$-valuations $s$ of $(G_m)_{cr(v_e)}$.

In an  admissible $n$-valuation of $(G_m)_{wh(v_e)}$, the two arcs forming the white transition can  either be assigned different colours, or both be assigned the same colour,
 and similarly in the crossing case.  We can separate the sums accordingly:
\[\mathop{\sum_{(G_m)_{wh(v_e)} }}_{\text{diff}}(-1)^{cr(s)}
+  \mathop{\sum_{(G_m)_{wh(v_e)} }}_{\text{same}}(-1)^{cr(s)}
-\mathop{\sum_{(G_m)_{cr(v_e)} }}_{\text{diff}}{(-1)^{cr(s)}}-\mathop{\sum_{(G_m)_{cr(v_e)} }}_{\text{same}}{(-1)^{cr(s)}}.\] 

The first sum corresponds to a sum over all admissible $n$ valuations of $G_m$ with a split state at $v_e$.  The second and fourth sums both correspond to all permissible $n$ valuations of $G_m$ with exactly one total vertex at $v_e$, and hence cancel.  The third sum corresponds to a sum over all admissible $n$ valuations of $G_m$ with a crossing state at $v_e$, but with one more crossing to be counted, thus changing the subtraction to addition. 

This yields that $P(G;n) = \sum{(-1)^{cr(s)}}$, as desired.

\end{proof}

Theorem \ref{th.addval} follows as an immediate corollary, using the Jordan curve theorem.  If $G$ is plane, then in any admissible colouring,  $G_m|_{E_i}$, the subgraph induced by the edges of colour  $i$, is a disjoint union of simple closed curves in the plane.  By the Jordan curve theorem, the intersection of $G_m|_{E_i}$ and $G_m|_{E_j}$ for any $i \neq j$ must be an even number of points.  Thus, $cr(s) \equiv 0$ mod 2 for every admissible colouring $s$ of $G_m$.

\section{Colourings and the Penrose polynomial}\label{s.colour}
We now use the ribbon group action to develop connections between the Penrose polynomial and proper graph colourings. In particular, we will show how the  Penrose polynomial of a plane graph can be written as a sum of the chromatic polynomials  of its twisted duals. This results completes a theorem of Aigner where the Penrose polynomial was shown to give an upper bound on the number of proper graph colourings. This extension of Aigner's result can not be realized by the original Penrose polynomial which is restricted to plane graphs.

\begin{theorem}[Aigner \cite{Ai97}]  \label{t.aigner}
Let $G$ be a plane graph, then for all $k\in \mathbb{N}$ we have  \[\chi(G^*;k)\leq P(G;k).\]
\end{theorem}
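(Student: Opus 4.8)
**Proof plan for Theorem (Aigner's inequality $\chi(G^*;k)\le P(G;k)$ for plane $G$).**

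The plan is to deduce this as a special case of the $k$-valuation machinery of Section~\ref{s.cpp}, rather than re-proving it from scratch. First I would recall that for a plane graph $G$, Theorem~\ref{th.addval} gives that $P(G;k)$ equals the number of admissible $k$-valuations of $G_m$. So it suffices to exhibit an injection from the set of proper $k$-colourings of $G^*$ into the set of admissible $k$-valuations of $G_m$, or more precisely to show that the proper $k$-colourings of $G^*$ are in bijection with a subset of the admissible $k$-valuations. The natural candidate is the classical correspondence: a proper $k$-colouring of $G^*$ assigns colours to the faces of $G$, and each edge $e$ of $G$ separates two faces; colour the corresponding edge of $G_m$... but $G_m$ has edges that follow face boundaries, not one edge per edge of $G$. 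The cleaner route is: a proper colouring $c$ of the vertices of $G^*$ (= faces of $G$) induces a colouring of the edges of $G_m$ by giving each edge of $G_m$ the colour of the face of $G$ it borders that it is ``closest to'' — concretely, the edges of $G_m$ bounding a given face-region of $G$ all receive that face's colour. Because $c$ is proper, at each vertex $v_e$ of $G_m$ the four incident edges carry exactly two colours (the colours of the two faces of $G$ meeting along $e$), each appearing on a pair of edges, and the two same-coloured edges are the ``split'' pair following that face — this is precisely the white-split type of admissible valuation with $i\ne j$. Thus every proper $k$-colouring of $G^*$ yields an admissible $k$-valuation of $G_m$ with no crossing vertices, and distinct colourings yield distinct valuations.

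Next I would argue the map is injective: from such a valuation one recovers the face colouring by reading off, for each face of $G$, the common colour of the $G_m$-edges bounding its region; since the valuation came from $c$, this returns $c$. Hence the number of proper $k$-colourings of $G^*$ is at most the number of admissible $k$-valuations of $G_m$, which is $P(G;k)$ by Theorem~\ref{th.addval}. Finally, the number of proper $k$-colourings of $G^*$ is exactly $\chi(G^*;k)$ when $k\in\mathbb N$ (and $G^*$ has no loops, i.e.\ $G$ is bridgeless — if $G$ has a bridge both sides are handled separately, since then $\chi(G^*;k)=0$ as $G^*$ has a loop, and the inequality is trivial), giving $\chi(G^*;k)\le P(G;k)$.

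The main obstacle I anticipate is making the ``edges of $G_m$ bordering a face of $G$ all get that face's colour'' assignment precise and checking it is genuinely an admissible valuation at every vertex — in particular verifying the parity/type condition (each colour appears an even number of times at $v_e$, and the configuration is of white-split type) and confirming that the correspondence is well-defined independent of choices. I would expect this to be a short local check at a degree-$4$ vertex of $G_m$, using that $v_e$ lies on exactly two faces of $G$ and that properness forbids those two faces sharing a colour, so the only other thing to watch is degenerate cases where a face of $G$ borders itself along $e$ (i.e.\ $e$ is a bridge of $G$), which is exactly the excluded case above.
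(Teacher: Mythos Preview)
Your proposal is correct and uses essentially the same idea as the paper. The paper does not give a standalone proof of Aigner's inequality but derives it as a corollary of Theorem~\ref{t.pac}, whose proof establishes a bijection between admissible $k$-valuations of $G_m$ and pairs $(A,c)$ with $A\subseteq E(G)$ and $c$ a proper boundary $k$-colouring of $G^{\tau(A)}$; the $A=\emptyset$ term of that bijection is precisely your injection from proper face colourings of $G$ (equivalently, proper vertex colourings of $G^*$) to the admissible $k$-valuations with no crossing states, and non-negativity of the remaining summands gives the inequality. Your direct argument isolates exactly this piece, so the two routes coincide.
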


In Theorem~\ref{t.pac}, we will  complete Theorem~\ref{t.aigner} by showing that the Penrose polynomial of a plane graph $G$ is in fact {\em equal} to a sum of specific chromatic polynomials, that is, $P(G;\lambda) = \sum_{A\subseteq E(G)}  \chi ((   G^{\tau(A)}   )^*   ;\lambda)  $.   Thus, the expression $\chi(G^*;k)$ is just one summand in the full expression for the Penrose polynomial $P(G;k)$ given in Theorem~\ref{t.pac}.  Theorem~\ref{t.aigner} then follows from Theorem~\ref{t.pac} as a corollary.

\begin{theorem}\label{t.pac}
If $G=(V(G),E(G))$ is a plane graph, then 
\[  P(G;\lambda) = \sum_{A\subseteq E(G)}  \chi ((   G^{\tau(A)}   )^*   ;\lambda)  .\] 
\end{theorem}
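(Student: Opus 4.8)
The plan is to deduce the identity from Jaeger's theorem (Theorem~\ref{th.addval}) by sorting the admissible valuations according to the transition state they realize. Since $G$ is plane, Theorem~\ref{th.addval} gives that $P(G;k)$ equals the number of admissible $k$-valuations of $G_m$ for every $k\in\mathbb{N}$; alternatively one may start from Theorem~\ref{sumcross}, the point being that the Jordan curve observation recorded just after that theorem forces $cr(s)$ to be even for plane $G_m$, which is exactly what erases the signs. An admissible $k$-valuation $s$ is, at each vertex $v_e$ of $G_m$, of white-split type or of crossing type, so recording which occurs at each vertex produces a state with no black splits, i.e. a Penrose state; by the bijection in the proof of Proposition~\ref{l.peng} this state is $s_A$ for a unique $A\subseteq E(G)$, namely the set of edges at which $s$ is a crossing. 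Partitioning accordingly, $P(G;k)=\sum_{A\subseteq E(G)}N(A)$, where $N(A)$ is the number of admissible $k$-valuations inducing the state $s_A$. Thus it suffices to prove $N(A)=\chi\big((G^{\tau(A)})^*;k\big)$ for each $A$.

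For this I would reuse the geometry set up in the proof of Proposition~\ref{l.peng}: the components of the Penrose state $s_A$ correspond bijectively to the faces of $G^{\tau(A)}$, hence to the vertices of $(G^{\tau(A)})^*$, and along each edge $e$ the two arcs of $s_A$ running past $v_e$ are exactly the two sides of the (possibly half-twisted) edge $e$ in $G^{\tau(A)}$, i.e. the two endpoints of the edge $e$ of $(G^{\tau(A)})^*$. If $s$ is an admissible $k$-valuation inducing $s_A$, then admissibility at a white-split vertex forces its two white arcs to be monochromatic of distinct colours, and admissibility at a crossing vertex forces its two through-strands to be monochromatic of distinct colours; following this along the curves shows that $s$ is the same datum as an assignment of one of $k$ colours to each component of $s_A$ such that the two components meeting at each vertex $v_e$ receive different colours — that is, a proper $k$-colouring of $(G^{\tau(A)})^*$. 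The correspondence is a bijection (recover $s$ by colouring each edge of $G_m$ by the colour of its $s_A$-component), so $N(A)=\chi\big((G^{\tau(A)})^*;k\big)$; when some component of $s_A$ runs past a vertex $v_e$ on both sides, both quantities are $0$. Summing over $A$, $P(G;k)=\sum_{A}\chi\big((G^{\tau(A)})^*;k\big)$ for every $k\in\mathbb{N}$, and since both sides are polynomials in $\lambda$ agreeing at all naturals, the identity holds in $\mathbb{Z}[\lambda]$.

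I expect the main obstacle to be precisely the identification $N(A)=\chi((G^{\tau(A)})^*;k)$: one has to track carefully how the through-arcs of the state $s_A$ at $v_e$ correspond to the two sides of $e$ in $G^{\tau(A)}$, and hence to adjacency across $e$ in $(G^{\tau(A)})^*$, and to check that monochromaticity of the $s_A$-components is equivalent to the admissibility constraints holding simultaneously at every vertex. It is also worth flagging where planarity enters: for a general embedded graph the corresponding identity reads $P(G;\lambda)=\sum_{A\subseteq E(G)}(-1)^{|A|}\chi\big((G^{\tau(A)})^*;\lambda\big)$, with the sign being $(-1)^{cr(s_A)}=(-1)^{|A|}$ coming from Theorem~\ref{sumcross}, and it is exactly the Jordan curve parity fact for plane $G_m$ that kills every term with $|A|$ odd and leaves the unsigned sum of Theorem~\ref{t.pac}.
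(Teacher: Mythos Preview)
Your argument is correct and follows essentially the same route as the paper's own proof: both start from Jaeger's Theorem~\ref{th.addval}, partition the admissible $k$-valuations according to the set $A$ of crossing vertices, and identify the valuations with fixed $A$ with proper colourings of $(G^{\tau(A)})^*$ via the correspondence between components of the Penrose state and boundary components/faces of $G^{\tau(A)}$ (the paper phrases this step as ``proper boundary $k$-colourings'' rather than invoking Proposition~\ref{l.peng}, but it is the same bijection). Your additional remark about the signed identity for general embedded graphs and the role of the Jordan curve parity is a correct aside that the paper does not include.
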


\begin{proof}[Proof of Theorem~\ref{t.pac}]

If $G$ is a plane graph, then, by Theorem~\ref{th.addval}, $P(G;k)$ is equal to the number of admissible $k$-valuations for all $k\in \mathbb{N}$:
\begin{equation}\label{pc1}P(G;k) =  ( \text{number of admissible $k$-valuations}).  \end{equation}
 We construct a bijection from the set of admissible $k$-valuations of the medial graph $G_m$ to a certain set of colourings of the boundaries of the partial Petrials  of $G$. To do this we view $G$ as a ribbon graph.  Given an admissible $k$-valuation, $\phi$, of $G_m$,  let $\{v_e\,|\, e\in A_{\phi}\}$  be the set of vertices of $G_m$ at which a crossing state is assigned in the $k$-valuation. Note that the indexing set, $A_{\phi}$, is a set of edges of $G$. Observe that the cycles in $G_m$ (which are determined by the colours  in the $k$-valuation $\phi$) follow exactly the boundary components of the partial Petrial $G^{\tau(A_{\phi})}$. Moreover, the colours of the  cycles in the $k$-valuation induce a colouring of the boundary components of $G^{\tau(A_{\phi})}$.   We define a {\em proper boundary $k$-colouring } of a ribbon graph to be a map from its set of boundary components to the colours $ \{1,2,\ldots , k\}$ with the property that whenever  two boundary components share a common edge, they are assigned different colours. It is then clear that the map from $\phi$ to $G^{\tau(A_{\phi})}$ defines a bijection between the set of admissible $k$-valuations of $G_m$ and the set of proper boundary $k$-colourings of the partial Petrials of $G$:
\begin{equation}\label{pc2}
 (\text{no. of admissible $k$-valuations of } G_m)=  \sum_{A\subseteq E(G)}  ( \text{no. of proper boundary $k$-colourings of } G^{\tau(A)} )
\end{equation}
Finally, we observe that a proper boundary $k$-colouring of $G^{\tau(A)}$  corresponds to a proper face $k$-colouring of $G^{\tau(A)}$ when $G^{\tau(A)}$ is viewed as cellularly embedded, and hence to a proper $k$-colouring of $(G^{\tau(A)})^*$.  Thus, the number of boundary $k$-valuations of a ribbon graph $G^{\tau(A)}$ is equal to $\chi((G^{\tau(A)})^*;k)$, for $k\in \mathbb{N}$:
\begin{equation}\label{pc3}
  \sum_{A\subseteq E(G)}  ( \text{number of boundary $k$-valuations of  } G^{\tau(A)} )  = \sum_{A\subseteq E(G)}   \chi ((   G^{\tau(A)}   )^*   ;k). 
\end{equation}
The result then follows from Equations~\eqref{pc1}, \eqref{pc2},and  \eqref{pc3}.
\end{proof}

We observe that Theorem~\ref{t.pac} does not hold for  arbitrary non-plane graphs (for example, consider the graph embedded on the Klein bottle that consists of one vertex and two edges $e$ and $f$ with the cyclic order $efef$ at the vertex). Nor does Theorem~\ref{t.pac} provide a characterization of plane graphs (for example, consider the graph embedded on the torus that consists of one vertex and two edges $e$ and $f$ with the cyclic order $efef$ at the vertex).

We conclude by using Theorem \ref{t.pac} to reformulate the Four Colour Theorem. 
\begin{corollary}
The following statements are equivalent:
\begin{enumerate}
\item the Four Colour Theorem is true;
\item for every connected, bridgeless plane graph $G$ there exists $A\subseteq E(G)$ such that $\chi((G^{\tau(A)})^*;3)\neq0$;
\item for every connected, bridgeless plane graph $G$ there exists $A\subseteq E(G)$ such that $\chi((G^{\tau(A)})^*;4)\neq0$.
\end{enumerate}
\end{corollary}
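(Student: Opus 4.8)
The plan is to combine Theorem~\ref{t.pac} with the classical Penrose--Tait form~\eqref{e.ptcol} of the Four Colour Theorem, via a reduction to plane cubic graphs. First I would rephrase (2) and (3) in terms of the Penrose polynomial. For a plane graph $G$ and a positive integer $k$, every summand $\chi\big((G^{\tau(A)})^*;k\big)$ in Theorem~\ref{t.pac} is a non-negative integer: it is the number of proper $k$-colourings of the abstract graph underlying $(G^{\tau(A)})^*$, which is $0$ exactly when that graph is not $k$-colourable (for instance when it carries a loop). Hence $P(G;k)\ge 0$, and $P(G;k)\neq 0$ if and only if $\chi\big((G^{\tau(A)})^*;k\big)\neq 0$ for at least one $A\subseteq E(G)$. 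So (2) says precisely that $P(G;3)\neq 0$ for every connected bridgeless plane graph $G$, and (3) that $P(G;4)\neq 0$ for every such $G$.

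The implication $(1)\Rightarrow(3)$ is then immediate: if $G$ is bridgeless, its dual $G^*=(G^{\tau(\emptyset)})^*$ is a loopless plane graph, so $\chi(G^*;4)>0$ by the Four Colour Theorem, and (3) holds with $A=\emptyset$. For (2) the choice $A=\emptyset$ fails --- a planar graph need not be $3$-colourable --- which is where the content of the statement lies. To handle the remaining implications I would pass through cubic graphs: by Tait's theorem the Four Colour Theorem is equivalent to ``every connected bridgeless plane \emph{cubic} graph $G$ has a proper $3$-edge-colouring'', and by~\eqref{e.ptcol} the number of such colourings equals $P(G;3)$; combined with the first paragraph this makes the Four Colour Theorem equivalent to ``$P(G;3)>0$ for every connected bridgeless plane cubic graph $G$''. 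This gives $(2)\Rightarrow(1)$ at once, by restricting the quantifier in (2) to cubic graphs. For $(3)\Rightarrow(1)$ one further needs that a plane cubic graph with no proper $3$-edge-colouring has vanishing Penrose polynomial at $\lambda=4$ as well; by Theorem~\ref{th.addval} this amounts to showing its medial graph admits no admissible $4$-valuation, which I would prove by a direct analysis exploiting that the black faces of the medial graph of a cubic plane graph are triangles.

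What remains is $(1)\Rightarrow(2)$: upgrading ``$P(G;3)>0$'' from the cubic case to all connected bridgeless plane graphs. The plan here is a reduction lemma. Given such a $G$, suppress each vertex of degree $\le 2$ and blow up each vertex of degree $d\ge 4$ into a planar $d$-cycle of degree-$3$ vertices carrying the original incident edges; this produces a connected bridgeless plane cubic graph $G'$, and I claim that $P(G';\lambda)$ equals $P(G;\lambda)$ times an explicit polynomial factor --- degree-two suppression leaving $P$ unchanged, each blow-up contributing a factor such as $(\lambda-1)$ coming from the extra $2$-cell bounded by the new cycle --- in any event a factor with no root at $\lambda=3$. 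All of this is computed by repeatedly applying the Penrose skein relation~\eqref{e.skP} at the affected vertices of the medial graph. Granting the lemma, $P(G';3)>0$ (the cubic case, under the Four Colour Theorem) forces $P(G;3)>0$, which is (2).

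The step I expect to be the main obstacle is this reduction lemma: determining the precise effect of vertex blow-up and degree-two suppression on the Penrose polynomial, and verifying that these local moves can be performed while keeping the graph connected, bridgeless, and plane --- together with the separate $\lambda=4$ computation for plane cubic graphs lacking a proper $3$-edge-colouring. Once these local analyses are in place, the corollary follows formally from Theorem~\ref{t.pac} and~\eqref{e.ptcol}.
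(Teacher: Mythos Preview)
Your opening paragraph is exactly the paper's argument: by Theorem~\ref{t.pac} every summand $\chi((G^{\tau(A)})^*;k)$ is a non-negative integer, so $P(G;k)>0$ if and only if some summand is positive, whence (2) and (3) are equivalent to ``$P(G;3)>0$ for every connected bridgeless plane $G$'' and ``$P(G;4)>0$ for every such $G$''. At this point the paper simply cites Corollary~9 of Aigner~\cite{Ai97}, which states that each of these two conditions is equivalent to the Four Colour Theorem; that citation is the entire remaining content of the paper's proof.

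You instead try to reprove Aigner's equivalence from scratch via a reduction to cubic graphs, and this part has real gaps that you yourself flag. The reduction lemma for $(1)\Rightarrow(2)$ is not established, and one stated detail is already wrong: degree-two suppression does not leave $P$ unchanged --- by Item~(\ref{twoface}) in Subsection~\ref{distinct}, it halves $P$ (harmless for positivity at $\lambda=3$, but a sign the local computations have not actually been carried out). The claimed factor for blowing up a high-degree vertex is entirely unsubstantiated, and there is no evident reason it should be a clean power of $(\lambda-1)$; this is a genuine calculation, not a formality. The $(3)\Rightarrow(1)$ step --- that a bridgeless plane cubic graph with no proper $3$-edge-colouring admits no admissible $4$-valuation of its medial graph --- is likewise left to an unspecified ``direct analysis'' and is not routine. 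These missing ingredients are precisely the content of Aigner's result; until they are supplied, the argument is an outline rather than a proof, and the efficient route is the paper's: invoke~\cite{Ai97}.
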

\begin{proof}
Corollary~9 of \cite{Ai97} states  that the Four Colour Theorem is equivalent to showing that $P(G;3)>0$ or $P(G;4)>0$ for all connected, bridgeless plane graphs $G$. Since $\chi(H;k)\geq 0$ for all $k\in \mathbb{N}$ and graphs $H$, Theorem~\ref{t.pac} tells us that  $P(G;k)>0$ if and only if one of the summands $\chi((G^{\tau(A)})^*;k)\neq0$. The result then follows.
\end{proof}

\bibliographystyle{amsplain} 

\end{document}